\documentclass{amsart}   
\usepackage[dvipsnames, table, xcdraw]{xcolor}
\usepackage{array, graphicx, amsfonts, amsmath, amssymb, amsthm, mathrsfs, enumitem, parskip, tikz, quiver, xspace, ulem, float, comment, thmtools, blkarray} 
\usepackage{hyperref}

\usepackage[margin=1in]{geometry}

\let\emph\relax 
\DeclareTextFontCommand{\emph}{\bfseries\itshape\boldmath}

\declaretheorem[style=plain, numberwithin=section, name=Theorem]{thm}
\declaretheorem[style=plain, sibling=thm, name=Corollary]{cor}
\declaretheorem[style=plain, sibling=thm, name=Proposition]{prop}
\declaretheorem[style=plain, sibling=thm, name=Facts]{facts}
\declaretheorem[style=plain, sibling=thm, name=Fact]{fact}
\declaretheorem[style=plain, sibling=thm, name=Lemma]{lemma}
\declaretheorem[style=remark, sibling=thm, name=Discussion]{discussion}
\declaretheorem[style=remark, sibling=thm, name=Remark]{rmk}
\declaretheorem[style=remark, sibling=thm, name=Remark]{remark}
\declaretheorem[style=definition, sibling=thm, name=Definition]{defn}
\declaretheorem[style=definition, sibling=thm, name=Example]{examp}
\declaretheorem[style=definition, sibling=thm, name=Notation]{notation}
\declaretheorem[style=definition, sibling=thm, name=Notation and Discussion]{notationdiscussion}
\newtheorem*{thm4.21*}{Theorem~\ref{Main4}}
\newtheorem*{cor3.9*}{Corollary~\ref{cor-hochsform-tSR-koszulver}}
\newtheorem*{thm5.18*}{Theorem~\ref{thm-tSR-terai}}

\newcommand{\new}{\newcommand}
\new{\und}{\underline}
\new{\comm}{\iftrue}  
\new{\bem}[1]{\bibitem[#1]{#1}} 
\new{\rbx}[2]{ \raisebox{#1}{\mbox{#2}}}
\new{\Strut}[1]{\rule{0.0pt}{#1}}

\new{\spx}[3]{\{v_{#1}, \, v_{#2}, \, v_{#3}\}}
\new{\emp}[1]{{\it #1}}
\new{\ov}{\overline}
\new{\vect}[2]{{#1}_1,\, \ldots, \, {#1}_{#2}}
\new{\ben}{\begin{enumerate}}      
\new{\bena}{\ben[label=(\alph*)]}     
\new{\beni}{\ben[label=(\roman*)]}    
\new{\bend}{\ben[label={\arabic*}.]}  
\new{\benn}{\ben[label=(\arabic*)]}   
\new{\benr}{\ben[resume*]}            
\new{\bensp}[1]{\ben[label=#1]}       
\new{\een}{\end{enumerate}}
\new{\bs}{$\backslash$}
\new{\bu}{{\bullet}}
\new{\Ga}{G_{\ua}}
\newcommand{\Znplus}[1]{({#1}_0, \underline{#1})}
\newcommand{\Zn}[1]{\underline{#1}}
\newcommand{\varsnplus}{x_0, \underline{x}}
\newcommand{\varsn}{\underline{x}}
\newcommand{\seq}[1]{\underline{#1}}
\new{\busm}{\,\raisebox{.65pt}{\mbox{\scalebox{.7}{$\bullet$}}}\,} 
\new{\nl}{\scalebox{.9}[1.2]{$\varnothing$}}
\new{\Nl}{\nl \kern -6.5 pt \nl}
\new{\wt}{\widetilde}
\new{\mI}{(-1,\ldots,-1)}
\newcommand{\monom}[1]{x_0^{{#1}_0}\underline{x}^{\underline{#1}}}

\newcommand{\tmonom}[1]{t^{{#1}_0}\underline{x}^{\underline{#1}}}

\newcommand{\tmonomlong}[1]{t^{{#1}_0}x_1^{{#1}_1}\hdots x_n^{{#1}_n}}
\newcommand{\monompos}[1]{\underline{x}^{\underline{#1}}}
\newcommand{\monomposlong}[1]{x_1^{{#1}_1}\hdots x_n^{{#1}_n}}

\newcommand{\SRrcomp}[1]{V[#1]}
\newcommand{\tSRrcomp}[1]{V\overline{[#1]}}

\newcommand{\SRi}[1]{I_{#1}}
\newcommand{\tSRi}[1]{{\overline{\SRi{#1}}}}
\new{\RV}{V\ov{[\Delta]}}

\newcommand{\monomify}[1]{#1'}

\newcommand{\monomF}[1]{\varsn^{#1}}
\new{\Fx}{\monomF{F}}
\newcommand{\zmonomF}[1]{(\varsnplus)^{#1}}
\newcommand{\tmonomF}[1]{(t, \varsn)^{#1}}

\newcommand{\vertset}{\{v_1, \hdots, v_n\}}
\newcommand{\zvertset}{\{v_0, v_1, \hdots, v_n\}}
\newcommand{\vertex}[1]{v_{#1}}

\new{\ang}[1]{\langle #1 \rangle}
\new{\oriented}[1]{\langle #1 \rangle}
\new{\vertlist}[2]{\vertex{#1}, \hdots, \vertex{#2}}

\newcommand{\del}{\partial}

\new{\bnd}{\del}
\new{\cobnd}{\delta}
\new{\bndsgn}[2]{[#1:#2]}
\newcommand{\coord}[3]{#1\bndsgn{#2}{#3}}

\new{\La}{{\Lambda}}
\new{\la}{{\lambda}}

\new{\gr}{\op{gr}}
\new{\link}{\op{link}}
\new{\st}{\overline{\op{St}}}

\newcommand{\Z}{\mathbb{Z}}
\newcommand{\Q}{\mathbb{Q}}
\newcommand{\R}{\mathbb{R}}

\newcommand{\N}{\mathbb{N}}

\newcommand{\C}{\mathbb{C}}
\newcommand{\Prj}{\mathbb{P}}
\new{\sm}{\,\mbox{\raisebox{1.2pt}{$\smallsetminus$}}\,}
\new{\Zo}{\und{\mathbf{0}}}

\new{\fA}{\mathfrak{A}}
\new{\fB}{\mathfrak{B}}
\new{\fC}{\mathfrak{C}}
\new{\fF}{\mathfrak{F}}
\new{\fk}{\mathfrak{k}}
\newcommand{\m}{\mathfrak{m}}
\newcommand{\n}{\mathfrak{n}}
\newcommand{\p}{\mathfrak{p}}
\newcommand{\q}{\mathfrak{q}}

\new{\cK}{\mathcal{K}}
\new{\cP}{\mathcal{P}}
\newcommand{\cC}{\mathcal{C}}
\new{\cS}{\mathcal{S}}

\new{\sF}{\scr{F}}
\new{\sG}{\scr{G}}

\new{\ua}{{\und{a}}}
\new{\uf}{{\und{f}}}
\new{\ug}{{\und{g}}}
\new{\umm}{{\und{m}}}
\new{\ux}{{\und{x}}}
\new{\uy}{{\und{y}}}
\new{\uz}{{\und{z}}}

\new{\tC}{\widetilde{C}}
\new{\tH}{\widetilde{\op{H}}}

\newcommand{\scr}[1]{\mathscr{#1}}
\newcommand{\calig}[1]{\mathcal{#1}}

\new{\op}{\operatorname}
\new{\buc}{^{\bullet}}
\new{\ch}[1]{\op{char}(#1)}
\new{\Homol}{\op{H}}
\new{\HH}{\op{H}}
\newcommand{\simpHomol}{\widetilde{\Homol}}
\newcommand{\simpchain}{\widetilde{\mathrm{C}}}

\new{\ba}[1]{[#1]_{\ua}} 
\new{\bo}[1]{[#1]_{\Zo}} 
\new{\mapcone}[1]{\op{Cone}(#1)}
\new{\Hm}{H_{\m}}
\new{\Hom}{\op{Hom}}
\new{\Ann}{\op{Ann}}
\new{\Ass}{\op{Ass}}
\new{\Coker}{\op{Coker}}
\new{\Ker}{\op{Ker}}
\new{\Span}{\op{Span}}
\newcommand{\Kos}{\mathcal{K}}
\new{\Tor}{\op{Tor}}
\new{\Ext}{\op{Ext}}

\new{\mplus}{\scalebox{1.3}{$\oplus$}}
\new{\Spec}{\op{Spec\,}}
\new{\depth}{\op{depth}}
\new{\height}{\op{ht}}
\new{\Hx}[2]{H^{#1}_{(\ux)}(#2)}
\new{\charac}{\op{char}}
\new{\rank}{\op{rank}}
\newcommand{\Serre}[1]{$(\mathrm{S}_{#1})$}
\new{\supp}{\op{supp}}
\new{\sign}{\op{sgn}}
\new{\SR}{Stanley-Reisner\xsp}
\new{\tSR}{$t$-\SR}
\new{\VD}{V\ov{[\Delta]}}
\new\arwf[1]{\xrightarrow{#1}}
\new{\imp}{\Rightarrow}
\new{\inc}{\subseteq}
\newcommand{\inj}{\hookrightarrow}
\newcommand{\surj}{\twoheadrightarrow}

\new{\then}{\Rightarrow}
\new{\Cv}{\check C}

\new{\xsp}{\xspace}
\new{\fg}{finitely generated\xsp}
\new{\LetV}{Let $(V,t)$  be a discrete valuation ring\xsp}
\new{\LetVDelta}{\LetV, and let $\Delta$ be a simplicial complex on $n+1$ vertices $\zvertset$\xsp}
\new{\nzd}{nonzerodivisor\xsp}
\new{\CM}{Cohen-Macaulay\xsp}
\new{\mlt}{multihomogeneous\xsp}
\new{\mltd}{multidegree\xsp}
\new{\mltg}{multigraded\xsp}
\new{\resp}{respectively\xsp}
\new{\sqf}{square-free\xsp}
\new{\sop}{system of parameters\xsp}
\new{\Cech}{\v Cech\xsp}

\definecolor{gre}{rgb}{0.0, .42, 0.0} 
\definecolor{pnk}{rgb}{1 , .35 , .35}  
\definecolor{gld}{rgb}{1 , .65 , 0}    
\definecolor{ltblue}{rgb}{.565,.835, 1} 
\definecolor{ltvi}{rgb}{.96, .93, .97}  
\definecolor{purple}{rgb}{.42, .1, .55} 
\definecolor{vigr}{rgb}{.96, .96, 1}    
\definecolor{vlb}{rgb}{.8,.8,.999}    

\new\blue[1]{{\color{blue} #1}}  
\new\cred[1]{{\color{red} #1}}  
\new\gre[1]{{\color{gre} #1}}
\new\mgn[1]{{\color{magenta} #1}}      
\new\orange[1]{{\color{orange} #1}}   
\new\pur[1]{{\color{purple} #1}}
\new\violet[1]{{\color{violet} #1}}

\begin{document}
\title{Stanley-Reisner Theory in Mixed Characteristic}
\author{Mel Hochster and Olivia Strahan}

\thanks{Hochster and Strahan were partially supported by National Science Foundation grants 
DMS--2200501 and DMS--2101075, respectively.}
\subjclass[2020]{Primary. 13F55, 13D45, 13A02, 13P20, 13H10.}

\keywords{Cellular sheaf cohomology, Cohen-Macaulay rings, Koszul cohomology, local cohomology, 
Serre conditions, simplicial cohomology, simplicial complex, Stanley-Reisner ring, $t$-Stanley-Reisner ring.}

\begin{abstract}
    We discuss a class of mixed characteristic rings defined analogously to Stanley-Reisner rings by replacing one variable with a uniformizing parameter for a discrete valuation ring. We adapt Hochster's formula for Tor and Ext modules, Hochster's formula for local cohomology, and Terai's criterion for Serre conditions to this new setting; one of the tools needed is cellular sheaf cohomology, for which we give a brief treatment.
\end{abstract}

\maketitle

\bigskip

\section{Introduction} 

A classic paper of Reisner  \cite{Rei76} studying what are now called {\it Stanley-Reisner rings} established the value of using simplicial cohomology and other tools from algebraic topology to study rings that are quotients of polynomial rings by ideals generated by square-free monomials, a theory with many  applications that has generated an immense literature (e.g., \cite{Sta96}, \cite{cca}, \cite{AdSa16}, \cite{hochform}, and their references).
In \cite{qseq} and \cite{thesis},
the second author initiated the study of various rings defined combinatorially, including 
Stanley-Reisner rings, in which the uniformizing parameter $t$ of a discrete valuation ring $(V,\,tV)$ plays the role of one of the variables. Such constructions are especially interesting in the cases where $V$, like the $p$-adic integers, has mixed characteristic. This paper continues with a deeper study of the properties of these $t$-Stanley-Reisner rings.   

Throughout the paper, $(V,\,tV)$ is a Noetherian discrete valuation domain, although we follow the frequent practice of referring to such a ring simply as a discrete valuation ring or DVR.  The residue field $V/tV$ and the fraction field of $V$ will be denoted $K$ and $L$, respectively.  If $V$ has mixed characteristic, we use $p$ to denote the characteristic of $K$. 

The simplicial complexes we consider are finite. Given a simplicial complex $\Delta$ with a 
distinguished vertex $\vertex{0}$, we define the associated 
$t$-Stanley-Reisner ring $\tSRrcomp{\Delta}$ to be the image of the Stanley-Reisner ring $V[\Delta]$ (see Definition~\ref{defn-SRnotation}) under the evaluation map sending $x_0$ to $t$. In the mixed characteristic case, we will see that $\tSRrcomp{\Delta}$ displays uniquely mixed characteristic behavior whenever $\Delta$ or certain subcomplexes of $\Delta$ have $p$-torsion in their cohomology with coefficients in $\Z$. In the computation of a given algebraic invariant of $\tSRrcomp{\Delta}$, there will be some pieces of the problem that depend on $L$ and thus behave generically. Other pieces of the problem will depend on $K$, and these will also behave generically \textit{except} when the characteristic of $K$ is one of finitely many special primes. Mixed characteristic behavior occurs when some parts behave generically and others behave exceptionally.

In \S\ref{sec-bg}, we cover the essentials of classical Stanley-Reisner theory before defining  $t$-Stanley-Reisner rings and outlining their basic properties. We adapt Hochster's formula for Koszul homology \cite{hochform} in \S\ref{sec-koszul}, followed by Hochster's formula for local cohomology \cite[Thm. 5.3.8]{brunsherz} in \S\ref{sec-local-cohomology}. Finally, in \S\ref{sec-serre} we adapt Terai's theorem for Serre conditions \cite{terai}. The main results of the paper are in \S\S3--5 and include Corollary~\ref{cor-hochsform-tSR-koszulver} and Theorem~\ref{Main4}.
\comm, and Theorem~\ref{thm-tSR-terai}\fi

\begin{cor3.9*}
     \LetVDelta. Considering the residue field $K$ as a $V[\varsn]$-module via $K \cong V[\varsn]/(t, \varsn)$, we have isomorphisms of $K$-vector spaces
    \begin{align*}
        \Tor_i^{V[\varsn]}(K, \tSRrcomp{\Delta}) &\cong \frac{\Tor_{i}^{V[\varsnplus]}(V, \SRrcomp{\Delta})}{t \cdot \Tor_{i}^{V[\varsnplus]}(V, \SRrcomp{\Delta})}  \; \oplus\;  \Ann_{\Tor_{i-1}^{V[\varsnplus]}(V, \SRrcomp{\Delta})}t\\
        \\
        \Ext_{V[\varsn]}^j(K, \tSRrcomp{\Delta}) &\cong \frac{\Ext^{j}_{V[\varsnplus]}(V, \SRrcomp{\Delta})}{t \cdot \Ext^{j}_{V[\varsnplus]}(V, \SRrcomp{\Delta})}  \; \oplus\;\Ann_{\Ext^{j+1}_{V[\varsnplus]}(V, \SRrcomp{\Delta})}t .
    \end{align*}  
    Combining this with Hochster's formula (Thm.~\ref{thm-og-hoch-koszulver}), we see that the $K$-vector space dimensions of $\Tor_i^{V[\varsn]}(K, \tSRrcomp{\Delta})$ and $\Ext_{V[\varsn]}^j(K, \tSRrcomp{\Delta})$ can be computed from the simplicial cohomology modules of deletions $\simpHomol^q(\Delta - T; V)$.
\end{cor3.9*} 

\begin{thm4.21*}
Let $(V,tV)$ be a DVR with fraction field $L$ and residue field $K$, and let $\Delta$ be a finite simplicial complex on $\{v_0,v_1,\ldots,v_n\}$. Put
\[
 R=\RV,\qquad\m=(t,x_1,\ldots,x_n)R.
\]
For $\ua=(a_1,\ldots,a_n)\in\Z^n$, set
\[
 G_\ua=\{v_i:a_i<0\},\qquad \La_\ua=\link_\Delta G_\ua.
\]
On $\La_\ua$, let $\sF_\ua$ be the cellular sheaf with values
\[
 \sF_\ua(F)=
 \begin{cases}
 L,&v_0\in F,\\
 V,&F\in\link_{\La_\ua}\{v_0\},\\
 K,&F\notin\st_{\La_\ua}\{v_0\},
 \end{cases}
\]
and with restriction maps the identities within each type, $V\hookrightarrow L$, and $V\twoheadrightarrow K$. Then
\[
 [H^i_\m(R)]_\ua\cong
 \begin{cases}
 \tH^{i-|G_\ua|-1}(\La_\ua;\sF_\ua),&\ua\leq\Zo,\\
 0,&\text{some }a_i>0.
 \end{cases}
\]
If $G_\ua\notin\Delta$, the right-hand side is understood to be zero. In particular, writing $\sF_G$ for the corresponding sheaf on $\link_\Delta G$,
\[
 H^i_\m(R)=0
 \quad\Longleftrightarrow\quad
 \tH^{i-|G|-1}(\link_\Delta G;\sF_G)=0
 \quad\text{for every }G\in\Delta-v_0.
\]
\end{thm4.21*}

\begin{thm5.18*}
Assume that $\Delta$ is a finite abstract simplicial complex of dimension $d$,
and that $k \geq 2$. Then
$R:=V\ov{[\Delta]}$ satisfies \Serre{k} if and only if $\Delta$ has pure dimension and
all of the following 
conditions hold for all faces $F$ of $\Delta$, including the empty face $\nl$.
\benn
\item For every face $F\in\Delta$ with $v_0\notin F$,   
$\tH^j(\link_{\Delta}F;\,K)=0$
for all $j < \min\{k-1,d-\dim F-1\}$.  

\item For every face $F \in \Delta$ with $v_0\in F$,
   \beni
    \item $\,\tH^j(\link_{\Delta}F;\,L)=0 \text{\ for all\ } j<\min\{k-1,d-\dim F - 1\} \text{\ and}$
     \item $\tH^j(\link_{\Delta}F;\, K)=0\text{\ for all\ } j< \min\{k-2,\,d-\dim F - 1\}$.
  \een
\een
Equivalently, since $\Delta$ is pure and thus $d - \dim F - 1 = \dim\link_{\Delta} F$,
the second entries in the $\min$ in both conditions may be 
replaced by $\dim\link_{\Delta} F$.
\end{thm5.18*}

As a case study, we apply each of the main theorems to the $t$-Stanley-Reisner ring defined by a triangulation of the real projective plane,  which has $2$-torsion in its simplicial homology with coefficients in $\Z$.

In our adaptation of Hochster's formula for local cohomology, we need to consider homology and cohomology with coefficients that vary with the simplex.  The terms of the graded pieces of the \v Cech complex of a $t$-Stanley-Reisner ring are, in general, not free $V$-modules, but rather direct sums of copies of $K$, $L$, and $V$.  In the more usual version of simplicial cohomology, one may think of cochains as functions on oriented simplices of a  given dimension with values in a fixed ring or module.  Here, we need a version in which the values on a given oriented simplex are determined functorially, in a certain precise sense,  from the choice of that oriented simplex.   In the simplicial case, this 
turns out to be what is sometimes called {\it cellular sheaf cohomology}, particularly in a number of papers written with an eye towards various applications \cite{shepard}, \cite{curry}. This terminology is used because the cohomology modules obtained are the same as the sheaf cohomology of a sheaf, in the usual sense, on a finite topological space constructed from the simplicial complex.
Note that we use an augmented version, analogous to reduced singular cohomology, with a cochain complex that has a term in degree $-1$. 

Throughout this paper, $\N \inc \Z \inc \Q \inc \R \inc \C$ denote the nonnegative integers, the integers, the rational numbers, the real numbers, and the complex numbers, \resp. All given rings are assumed to be commutative with identity, and local rings are assumed to be Noetherian.

\section{Background} \label{sec-bg}
\subsection{Simplicial complexes}  

  Simplicial complexes are central objects of study in both combinatorics and topology. 
  In this section we give a brief overview of the most relevant terminology and facts.

\begin{defn}[{\cite{Mun84a, cca}}]
    \label{def-simplicialcomplex} Let $V = \vertset$ be a collection of vertices. An \emph{$i$-dimensional simplex} is a subset $F \subseteq V$ of cardinality $i + 1$. A \emph{simplicial complex} is a collection of simplices $\Delta \subseteq \mathcal{P}(V)$ with the property that if $F \in \Delta$, then $G \in \Delta$ for every subset $G \subseteq F$. The simplices in $\Delta$ are called \emph{faces}, and a face which is maximal with respect to inclusion is called a \emph{facet}. An  \emph{\boldmath $i$-face} of $\Delta$ is an $i$-dimensional simplex $F \in \Delta$ . The \emph{dimension} of $\Delta$ is the maximum dimension of its faces. 
    
\end{defn}

\begin{rmk}\label{emptyset}
    By this definition, the empty set $\nl$ is a $(-1)$-face of any nonempty simplicial complex.
\end{rmk}

\begin{defn}\label{def-link-star}
    Let $\Delta$ be a simplicial complex on vertex set $V$, and let $F\subseteq V$. The  \emph{closed star},
 also called simply the \emph{star} in the combinatorial commutative algebra literature,  
    and the \emph{link} of $F$ in $\Delta$ are, respectively, the simplicial complexes
    \begin{align*}
        \st_\Delta F &:= \{G \in \Delta : F \cup G \in \Delta\},\\
        \link_\Delta F &:= \{G \in \Delta : F \cap G = \nl,\;  F \cup G \in \Delta  \}.  
    \end{align*}
    Both $\st_\Delta F$ and $\link_\Delta F$ are the empty simplicial complex $\Nl$ 
    unless $F$ is a face of $\Delta$. \smallskip
    
If $T \inc V$, the \emph{deletion} of $T$ is the subcomplex
$\Delta-T:=\{G\in\Delta:G\cap T=\nl\}$.  We write $\Delta-v$ for $\Delta-\{v\}$.
It is important to note that $\Delta-T$ is very different from $\Delta \sm T$. 
    \end{defn}

\begin{rmk}\label{0} It is very 
    important in combinatorial commutative algebra to distinguish between the empty simplicial complex,
    which we denote  $\Nl$, which has no faces, and the simplicial complex $\{\nl\}$ whose unique face is the  empty set, $\nl$. Note that the reduced simplicial cohomology or 
    homology of the former with coefficients in
    $A$ is 0 in every degree, while the reduced simplicial cohomology or homology of the latter is nonzero
    precisely in degree $-1$, where it is isomorphic to $A$.
\end{rmk}

\begin{defn} \label{def-simpchain}
    Let $\Delta$ be a simplicial complex on vertices $\vertset$. An \emph{oriented $q$-face} of $\Delta$ is an equivalence class $\oriented{\vertlist{i_0}{i_q}}$ of signed ordered lists of distinct vertices $\vertex{i_j}$ forming a $q$-face of $\Delta$, under the equivalence relation generated by requiring the equivalence of $\oriented{\vertlist{i_0}{i_q}}$ and 
    $\sign(\sigma) \oriented{\vertlist{i_{\sigma(0)}}{i_{\sigma(q)}}}$ for any permutation $\sigma$ of $\{0, \hdots, q\}$. For a $q$-face $F \in \Delta$, let $\oriented{F}$ be the equivalence class of the vertices of $F$ listed in increasing order.
    
    For an oriented $(q-1)$-face $\sigma$ and an oriented $q$-face $\tau$, define
    \[
      \bndsgn{\sigma}{\tau} := \begin{cases}
          0 &\text{if $\sigma$ is not a face of $\tau$,}\\
          1 &\text{if $\sigma \subset \tau$ and the orientation of $\tau$ induces that of $\sigma$,}\\
          -1 &\text{if $\sigma \subset \tau$ and the orientation of $\tau$ induces the opposite orientation of $\sigma$}.
      \end{cases}
    \]
    More explicitly, for $\tau = \oriented{\vertlist{i_0}{i_{q}}}$ and $\sigma = \oriented{\vertex{i_0}, \hdots, \widehat{\vertex{i_j}}, \hdots, \vertex{i_q}}$, we have $\bndsgn{\sigma}{\tau} = (-1)^j$.

    For a commutative ring $A$, define the \emph{(reduced) simplicial chain complex} of $\Delta$ over $A$ to be the complex of free $A$-modules $\simpchain_q(\Delta; A)$ with basis $\{\oriented{F} : F \in \Delta, \; \dim(F) = q\}$, where the boundary map acts on oriented $q$-faces $\oriented{F}$ by 
    \[
        \bnd (\oriented{F}) = \sum_{\dim G = q-1} \bndsgn{\oriented{G}}{\oriented{F}} \oriented{G}
    \]
    The homology of this complex is denoted by $\simpHomol_\bullet(\Delta; A)$. 

    The \emph{(reduced) simplicial cochain complex} is $\simpchain^\bullet(\Delta; A) := \Hom_A(\simpchain_\bullet(\Delta; A), A)$, with cohomology denoted by $\simpHomol^\bullet(\Delta; A)$.

\end{defn}

\begin{rmk}
    One need not fix an ordering of the vertices to define simplicial homology. We do so here because the vertex ordering provides the small convenience of a default orientation for each face of $\Delta$.
\end{rmk}

\begin{rmk}
    The \textit{non-reduced} simplicial (co)homology is obtained by omitting the degree $-1$ term from the simplicial (co)chain complex. The reduced and non-reduced (co)homology modules are identical except possibly in 
    degrees $0$ and $-1$.
\end{rmk}

\begin{defn} \label{def-geometric-realization}
    Let $e_i \in \R^n$ be the point whose $i^{th}$ coordinate is $1$, with all other coordinates vanishing. For a $q$-simplex $F = \{\vertlist{i_0}{i_q}\} \inc\vertset$, define $|F| \subset \R^n$ to be the closed subset 
    $\{\lambda_0 e_{i_0} + \hdots + \lambda_q e_{i_q} \; : \; 0 \leq \lambda_i \leq 1, \; \lambda_0 + \hdots + \lambda_q = 1 \}$. 
    
    Let $\Delta$ be a simplicial complex on vertices $\vertset$. The \emph{geometric realization} of $\Delta$ is the topological space $|\Delta| = \bigcup|F| \subset \R^n$, where the union is taken over all faces $F \in \Delta$.
\end{defn}

\begin{discussion}\label{simp-vs-sing} Homotopic continuous maps of the underlying spaces induce the same map of simplicial (co)homology with specified coefficients.  This can be proved by the simplicial approximation theorem \cite[Ch.~3,$\,$\S\S3--4]{Spa95}. 
It also follows from the corresponding result for singular (co)homology, and the fact that the singular and simplicial theories agree on non-trivial finite simplicial complexes. See \cite[4.6 Thm.~8]{Spa95}. The trivial simplicial complex $\{\nl\}$ and the empty simplicial complex $\Nl$ are indistinguishable as topological spaces, so one must be slightly careful about the statement in homological degree $-1$.

Once we know that the invariants we are interested in are given by simplicial (co)homology, we may use a vast array of topological techniques to compute them, e.g., CW-complex methods \cite[Ch.\,7,\,\S6]{Spa95}, \cite[\S1]{For02} and Morse theory \cite{milnor}, which exists in a discrete combinatorial version \cite{For98, For02}. \end{discussion}

\subsection{Stanley-Reisner theory} 

Stanley-Reisner rings are an important class of rings in bijective correspondence with simplicial complexes; the relationships between algebraic, topological, and combinatorial properties in this correspondence have been studied extensively in the equal characteristic setting. Classically, Stanley-Reisner rings are defined over fields, but in this section we will define and state properties of Stanley-Reisner rings over an arbitrary Noetherian commutative ring.

\begin{defn} \label{def-SRring}
    Let $\varsn = x_1, \hdots, x_n$ be variables, and let $A$ be a commutative ring. A monomial $\monompos{m} = \monomposlong{m}$ is called \emph{\sqf} if $m_i = 0$ or $1$ for all $i$. An ideal $I \subseteq A[\varsn]$ is called a \emph{\sqf monomial ideal} if it can be generated by \sqf monomials. A \emph{Stanley-Reisner ring} is the quotient $A[\varsn]/I$ by a \sqf monomial ideal $I$. 
\end{defn}

\begin{rmk}
    A monomial ideal $I \subseteq A[\varsn]$ has a unique minimal set of monomial generators (\cite[Lemma 1.2] {cca}), so $I$ is \sqf if and only if all of its minimal monomial generators are \sqf.
\end{rmk}

\begin{defn}\label{defn-SRnotation}
    We define the following notations:
    \benn
        \item The \emph{support} of a monomial $\monompos{m} = \monomposlong{m}$ is the simplex
        $\supp \monompos{m} := \{\vertex{i} \; : \; m_i \neq 0\}$.  
         \item A simplex $F \subseteq \vertset$ defines a \sqf monomial 
         $\monomF{F} := \prod_{\vertex{i}\in F}x_i$.
        The \sqf monomials in $A[x_1, \hdots, x_n]$ are in bijection with simplices on $\vertset$. Note that $\monomF{\nl} = 1$.

        \item If $\Delta$ is a simplicial complex on $\vertset$, define the \emph{Stanley-Reisner ideal} of $\Delta$ to be the \sqf monomial ideal $\SRi{\Delta} := (\monomF{F}: F \notin \Delta)$ of $A[\varsn]$. Define the \emph{Stanley-Reisner ring} of $\Delta$ over $A$ to be the quotient $A[\Delta] := A[\varsn]/\SRi{\Delta}$.

        \item A simplex $F \subseteq \vertset$ defines a monomial ideal $P_F := (x_i : v_i \notin F)$. This ideal is prime when $A$ is a domain; for arbitrary $A$, the ideals $P_{\p,F} := \p+P_F$ are prime whenever $\p\in\Spec A$.
    \een
\end{defn}

We next note the following useful fact from 
\cite[Ch.\,IV,\,\S2,\,no.\,6,\,Th.\,2,\,Cor.\,2,\,pp.\,279--280]{Bour89} 
and the Remark following Cor.\,2. 

\begin{prop}\label{ass} Let $A \to R$ be Noetherian with $R$ flat over $A$.
Let $\q \in \Spec(R)$ lie over $\p \in \Spec(A)$, and let $\ov{\q}= (A\sm\p)^{-1}\q\big/\p(A\sm\p)^{-1}R \cong (\q/\p R)_{\p}$
be the corresponding prime of $\kappa_{\p}\otimes_A R$. Then: \linebreak
\centerline{$\q\in \Ass_{R}(R) \iff \Big(\p\in\Ass_A(A) \text{\ and\ }  \ov{\q}\in \Ass_{\kappa_{\p}\otimes_A R}
\big(\kappa_{\p}\otimes_A R)\Big)$.}
\end{prop}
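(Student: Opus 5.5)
The plan is to reduce everything to a depth computation in a flat local homomorphism. Let $\q$ lie over $\p$. First, associatedness localizes. We have $\q \in \Ass_R(R)$ if and only if $\q R_\q \in \Ass(R_\q)$, i.e. $\depth R_\q = 0$. Likewise $\p\in\Ass_A(A)$ if and only if $\depth A_\p = 0$. For the fiber, we identify the localization of $\kappa_\p\otimes_A R$ at $\ov{\q}$ with $R_\q/\p R_\q$. So $\ov{\q}\in\Ass(\kappa_\p\otimes_A R)$ if and only if $\depth(R_\q/\p R_\q)=0$. This identification is routine: invert $A\sm\p$, kill $\p$, then invert the remaining elements outside $\q$.

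Next, the map $A_\p \to R_\q$ is a flat local homomorphism of Noetherian local rings. Flatness of $R_\q$ over $A_\p$ follows since $R_\q$ is a localization of $R\otimes_A A_\p$, which is flat over $A_\p$. I would then invoke the standard depth formula for flat local maps:
\[
\depth R_\q=\depth A_\p+\depth\bigl(R_\q/\p R_\q\bigr).
\]
Both summands are nonnegative, so $\depth R_\q=0$ exactly when both vanish. Combined with the previous paragraph, this gives the stated equivalence.

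The main obstacle is the depth formula itself, if one does not simply cite it (e.g.\ Matsumura, Thm.~23.3). I would prove it by induction on $\depth A_\p$.

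\emph{Inductive step.} If $y\in\p A_\p$ is a nonzerodivisor on $A_\p$, then tensoring $0\to A_\p\xrightarrow{y}A_\p$ with the flat module $R_\q$ shows $y$ is a nonzerodivisor on $R_\q$. Moreover $R_\q/yR_\q$ is flat over $A_\p/yA_\p$, with the same closed fiber. So both sides of the formula drop by one.

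\emph{Base case} ($\depth A_\p=0$). Write $k=\kappa_\p$, which embeds in $A_\p$. Flatness gives an injection $k\otimes R_\q=R_\q/\p R_\q\hookrightarrow R_\q$. Hence $\Hom_{R_\q}(\kappa_\q,\,R_\q/\p R_\q)$ injects into $\Hom_{R_\q}(\kappa_\q,R_\q)$. Thus if the fiber has depth $0$, so does $R_\q$. This already proves the direction $(\Leftarrow)$ of the proposition.

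\emph{Direction $(\Rightarrow)$.} Here only the vanishing consequences are needed. If $\depth A_\p>0$, the inductive step exhibits a nonzerodivisor on $R_\q$ in $\q R_\q$, so $\depth R_\q>0$. If instead $\depth A_\p=0$ but the fiber has positive depth, I would lift a fiber nonzerodivisor $z\in\q R_\q$. The local flatness criterion then shows $z$ is a nonzerodivisor on $R_\q$ and $R_\q/zR_\q$ is still flat over $A_\p$. This last point, that regularity on the closed fiber lifts to regularity on $R_\q$ under flatness, is the step requiring the most care, and I would handle it via the local criterion of flatness applied to multiplication by $z$.
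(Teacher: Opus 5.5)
Your proof is correct, but it takes a different route from the paper. The paper does not prove Proposition~\ref{ass}; it cites Bourbaki. That theorem works globally with associated primes: for $R$ flat over $A$ and an $A$-module $M$, it computes the whole set $\Ass_R(M\otimes_A R)=\bigcup_{\p\in\Ass_A M}\Ass_R(R/\p R)$. It then identifies the primes in $\Ass_R(R/\p R)$ with the associated primes of the fiber $\kappa_\p\otimes_A R$. This works because elements of $A\sm\p$ are nonzerodivisors on $R/\p R$, which is flat over the domain $A/\p$.

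You instead do the following:
\begin{enumerate}
\item Localize at $\q$ and turn all three memberships into depth-zero conditions.
\item Apply the flat local depth formula to $A_\p\to R_\q$. This is exactly the paper's Proposition~\ref{flatdp}.
\end{enumerate}
Your self-contained version is essentially Matsumura's proof of Theorem~23.3, specialized to depth zero. Its one nontrivial input is that an element $z\in\q R_\q$ regular on $R_\q/\p R_\q$ is regular on $R_\q$. You can get this from the local flatness criterion, as you propose, or directly. Flatness gives $\p^kR_\q/\p^{k+1}R_\q\cong(\p^kA_\p/\p^{k+1}A_\p)\otimes_{\kappa_\p}(R_\q/\p R_\q)$, so $z$ is regular on every $R_\q/\p^kR_\q$, and Krull's intersection theorem finishes.

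What your route buys is economy and locality: within the paper, Proposition~\ref{ass} becomes a corollary of Proposition~\ref{flatdp}, which is cited immediately afterward anyway. The Bourbaki route never passes through depth, and it describes all associated primes of $M\otimes_A R$ at once.

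One expository point: your ``induction'' does not actually prove the full depth formula, since your base case gives only one implication. As you note, though, the two vanishing directions are all the proposition needs, and you supply both.
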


Note the following local depth formula \cite[(21.B)\,Thm.\,50\,and\,(21.C)\,Cor.\,21.1]{matsumura}

\begin{prop}\label{flatdp} For a flat local map of local rings $(A, \n) \to (B, \m)$, 
$\depth_{\m}B = \depth_{\n}A + \depth_{\m/\n}B/\n B$.
\end{prop}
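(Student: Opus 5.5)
The plan is to prove the formula by induction on $\depth_{\n}A+\depth_{\m/\n}B/\n B$. At each step I cut down by a regular element, either on $A$ or on the closed fibre. This keeps the situation flat and local and lowers exactly one of the two depths by one. Throughout I use only flatness of $A\to B$, Noetherianity, and the fact that $\n B\inc\m$.

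\textbf{Step 1 (reduce $\depth A$).} Suppose $\depth_{\n}A>0$ and pick $x\in\n$ that is a \nzd on $A$. Tensoring the injection $A\xrightarrow{x}A$ with the flat module $B$ shows that $x$ is a \nzd on $B$; its image lies in $\m$. Moreover $B/xB\cong B\otimes_A A/xA$ is flat over $A/xA$, and the closed fibre is unchanged, since $(B/xB)/\n(B/xB)=B/\n B$. Then
\[
\depth B/xB=\depth B-1 \quad\text{and}\quad \depth A/xA=\depth A-1,
\]
so the formula for $A/xA\to B/xB$ gives it for $A\to B$. Hence we may assume $\depth_{\n}A=0$.

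\textbf{Step 2 (reduce the fibre depth).} Suppose $\depth_{\m}B/\n B>0$ and pick $y\in\m$ whose image is a \nzd on $B/\n B$. The key lemma, which is the standard consequence of the local criterion of flatness (Matsumura (20.E)), says that $y$ is then a \nzd on $B$ and $B/yB$ is flat over $A$. I would prove it by applying $-\otimes_A B$ to $0\to\n\to A\to A/\n\to 0$ and using that $\Tor_1^A(A/\n,B/yB)$ vanishes. That vanishing comes from the long exact sequence of $0\to B\xrightarrow{y}B\to B/yB\to0$ once injectivity of $y$ is known. Injectivity itself I would get by the Artin--Rees/Krull intersection argument: if $yb=0$, show inductively that $b\in\n^kB$ for all $k$, using that $y$ is a \nzd on each $\n^kB/\n^{k+1}B\cong(\n^k/\n^{k+1})\otimes_{A/\n}B/\n B$ (flatness), and conclude $b=0$. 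The new fibre is $B/(\n B+yB)$, whose depth drops by one, as does $\depth B$. So we may also assume $\depth_{\m}B/\n B=0$. This lemma is the main technical obstacle; everything else is formal.

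\textbf{Step 3 (base case).} Both depths are now zero, so there is an injection $A/\n\inj A$. Tensoring with the flat module $B$ gives $B/\n B\inj B$. Also $\m\in\Ass(B/\n B)$, which gives $B/\m\inj B/\n B$. Composing, $B/\m\inj B$, so $\depth_{\m}B=0$ and the formula $0=0+0$ holds.
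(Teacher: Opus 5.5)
Your proof is correct and is essentially the standard argument from the Matsumura reference that the paper cites in place of a proof: induction on $\depth_{\n}A+\depth B/\n B$, cutting down either by an $A$-regular element or by a fibre-regular element $y$ (with the local flatness criterion keeping $B/yB$ flat over $A$), and handling the base case via $B/\m\inj B/\n B\inj B$. Your Krull-intersection argument that $y$ is $B$-regular is a valid self-contained proof of that half of the key lemma; the other half, flatness of $B/yB$, rests on the local flatness criterion, which you correctly cite.
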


\begin{prop}\label{prop-ADeltabasics}
    Let $A$ be a nonzero Noetherian ring, and
    let $\Delta$ be a nonempty simplicial complex on $n$ vertices $\vertset$.
    \bena
        \item $A[\Delta]$ is a $\Z^n$-graded $A$-module whose nonzero graded components are $[A[\Delta]]_{\Zn{m}} = A\monompos{m} \cong A$ whenever $\supp \monompos{m} \in \Delta$. In particular, $A[\Delta]$ is a free $A$-module with basis $\{\monompos{m} : \supp \monompos{m} \in \Delta\}$.
        \item The set of $\Z^n$-graded prime ideals in $A[\Delta]$ is precisely $\{\p + P_F \; | \; \p \in \Spec A, \; F \in \Delta\}$.
        
        \item The Krull dimension of $A[\Delta]$ is
        $\dim A[\Delta] = \dim A + \dim \Delta + 1$.
         In fact, for any multihomogeneous prime $P = \p + P_F$, we have
        $\height P = \height \p + \dim \; \link_\Delta F + 1.$
        \item If $(A, \n, K)$ is local and $\m = \n + (\varsn)$ is the multihomogeneous maximal ideal of $A[\Delta]$, then $\depth_\m A[\Delta] = \depth_\n A + \depth_{(\varsn)}K[\Delta]$.
        \item  If $A$ is Noetherian, the associated primes of $A[\Delta]$ are $\p + P_F$ for $\p \in \Ass(A)$ and $F$ a facet of $\Delta$. In particular, $A[\Delta]$ is reduced if and only if $A$ is reduced.
    \een
\end{prop}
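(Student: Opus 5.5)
The plan is to work directly with the $\Z^n$-grading and the free $A$-basis of square-free-supported monomials, proving (a) first and deriving (b)--(e) from it.

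For (a), note that $\SRi{\Delta}$ is a monomial ideal. Its $A$-span is therefore spanned by the monomials divisible by some $\monomF{F}$ with $F\notin\Delta$, and these are exactly the monomials $\monompos{m}$ with $\supp\monompos{m}\notin\Delta$. Since the monomials form an $A$-basis of $A[\varsn]$, the quotient is free on the remaining monomials, and the grading is inherited.

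For (b), I would use the standard fact that a graded prime of a $\Z^n$-graded ring is determined by its homogeneous elements. A multihomogeneous element of degree $\umm$ has the form $a\monompos{m}$. If $P$ is a graded prime, let $\p=P\cap A$, which is prime. I claim $P=\p+P_F$, where $F=\{v_i:x_i\notin P\}$. First, if $a\monompos{m}\in P$ with $a\notin\p$, then by primality some $x_i$ with $m_i>0$ lies in $P$. Second, $F\in\Delta$: the product $\monomF{F}$ is not in $P$, since $P$ is prime and no factor lies in $P$, so in particular $\monomF{F}\neq0$ and hence $F\in\Delta$. Conversely, $A[\Delta]/(\p+P_F)\cong (A/\p)[x_i:v_i\in F]$ is a domain, because every subset of $F$ is a face.

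For (c), I would localize at a graded prime $P=\p+P_F$, or equivalently compute heights of minimal primes over it. The minimal primes of $A[\Delta]$ are the $\p_0+P_{F'}$ with $\p_0$ minimal in $A$ and $F'$ a facet, because $A[\Delta]$ is free over $A$ and $\SRi{\Delta}=\bigcap_{F'}P_{F'}$. Chains of primes descending from $P$ can be compared with the chains obtained after inverting $\monomF{F}$. There one has $A[\Delta][\monomF{F}^{-1}]\cong A[\link_\Delta F][x_i^{\pm1}:v_i\in F]$, and under this isomorphism $P$ corresponds to $\p+(x_i:v_i\notin F)$, which is $\p$ plus the homogeneous maximal ideal of the link part. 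The height is therefore $\height\p+\dim A[\link F]$ measured at its irrelevant ideal, namely $\height\p+\dim\link_\Delta F+1$, provided the case $F$ maximal-ideal-like is handled by the polynomial-ring-over-$A$ formula $\height(\p+(\ux))=\height\p+n$ for polynomial rings. That formula is valid for Noetherian $A$. Since $A[\Delta]$ is graded, the maximum height is attained at graded primes, which gives the dimension formula with $F=\nl$.

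For (d), apply Proposition \ref{flatdp} to $A_\n\to A[\Delta]_\m$, which is flat by (a) and has closed fiber $K[\Delta]_\m$.

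For (e), apply Proposition \ref{ass} to the flat extension $A\to A[\Delta]$. The fiber over $\p$ is $\kappa_\p[\Delta]$, whose associated primes are the $P_F$ with $F$ a facet, by the classical field case $\SRi{\Delta}=\bigcap P_F$. Associated primes are graded, so they are of the stated form. Reducedness follows because $A[\Delta]$ embeds in $\prod_F A[x_i:v_i\in F]$, and because $A\subseteq A[\Delta]$.

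The main obstacle is (c): the height computation for non-maximal $F$ over an arbitrary Noetherian $A$. My first attempt would use the localization isomorphism together with the formula for heights in Laurent/polynomial extensions, i.e. $\height(\p R[x])=\height\p$ and the standard graded height formulas.
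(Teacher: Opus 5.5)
Your treatment of (a), (b), (d), (e) matches the paper's. The paper also reads (a) and (b) off the monomial basis; your argument with $\p=P\cap A$ and $F=\{v_i:x_i\notin P\}$ fills in what it leaves implicit. Parts (d) and (e) are the same applications of Propositions~\ref{flatdp} and~\ref{ass}, and your reducedness check via $A[\Delta]\inj\prod_{F}A[x_i:v_i\in F]$ is a clean direct argument.

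Part (c) is where you diverge. The paper applies the flat local \emph{dimension} formula to $A_\p\to A[\Delta]_P$. Its closed fiber is $\bigl(\kappa_\p(x_i:v_i\in F)[\link_\Delta F]\bigr)_{(x_j:\,v_j\in\link_\Delta F)}$, of dimension $\dim\link_\Delta F+1$. This is the exact dimension analogue of what you do for depth in (d). The paper then computes $\dim A[\Delta]$ directly from $\SRi{\Delta}=\bigcap_{F}P_F$ over facets.

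Your route for (c) is viable but not finished. After inverting $\Fx$ you are left with the height of $\p$ plus the irrelevant ideal of $A[\link_\Delta F]$, which is not a polynomial ring. The clause ``provided the case \dots is handled by the polynomial-ring formula'' is exactly the step that still needs an argument.

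That step is easy, and it makes the localization unnecessary. Use $\height Q=\max_{Q'}\height(Q/Q')$ over minimal primes $Q'\inc Q$.
\begin{itemize}
\item The minimal primes inside $\p+P_F$ are the $\q_0+P_{F'}$ with $\q_0\inc\p$ minimal in $A$ and $F'\supseteq F$ a facet.
\item Modulo such a prime you are in $(A/\q_0)[x_i:v_i\in F']$. There $\p+P_F$ becomes $\p/\q_0+(x_i:v_i\in F'\sm F)$, of height $\height(\p/\q_0)+|F'\sm F|$.
\item Maximizing over $\q_0$ and $F'$ gives $\height\p+\dim\link_\Delta F+1$.
\end{itemize}

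Separately, ``since $A[\Delta]$ is graded, the maximum height is attained at graded primes'' is not a valid general principle. For example, $k[x,x^{-1}]$ with $\deg x=1$ has dimension $1$ but only the graded prime $0$. The claim does hold here, because the grading is nonnegative with $A$ in degree $0$. But you need not invoke it: the same minimal primes give $\dim A[\Delta]=\max_{\q_0,F'}(\dim A/\q_0+|F'|)=\dim A+\dim\Delta+1$, which is how the paper argues.

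Once completed, your version uses only the polynomial-ring height formula. The paper's version is shorter and handles all graded primes uniformly.
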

\begin{proof} The first two parts follow from the
monomial basis. For (c), let $P=\p+P_F$ be a multihomogeneous prime. The map
$A_\p\to A[\Delta]_P$ is flat and local, with closed fiber
\[
 \left(\kappa_\p(x_i:v_i\in F)[\link_\Delta F]\right)_{(x_j:v_j\in\link_\Delta F)}.
\]

This fiber has dimension $\dim\link_\Delta F+1$, so the local dimension formula for flat local maps
\cite[Lemma 10.112.7, Tag 00ON]{Stacks} gives the asserted height formula.
For the total dimension let $\cS$ be the set of facets of $\Delta$. The facet decomposition
$I_\Delta=\bigcap_{F \in \cS}P_F$ implies
$ \dim A[\Delta]
 =\max_{F \in \cS} \dim A[x_i:v_i\in F]
 =\dim A+\dim\Delta+1.$
 
Part (d) follows from Proposition~\ref{flatdp}, while part (e) follows from Proposition~\ref{ass}, since $A[\Delta]$ is $A$-free and the fibers
are  Stanley-Reisner rings $\kappa_{\p}[\Delta]$ over a field, in which the associated primes, which are minimal, are generated by images of sets of variables corresponding to sets of vertices of the form $\{\vect v n\} \sm F$, where $F$ is a facet of $\Delta.$
\end{proof}

\subsection{\texorpdfstring{$t$}{t}-monomials and \texorpdfstring{$t$}{t}-Stanley-Reisner rings} 

   Fix a discrete valuation ring $(V,t)$. In this section we will introduce \emph{$t$-monomials} and the correspondence with classical monomials induced by evaluation $x_0 \mapsto t$. We then define \emph{$t$-Stanley-Reisner rings} analogously to classical Stanley-Reisner rings. We shall see that the fundamentals of Stanley-Reisner correspondence still hold in this new setting. 

\begin{defn}
    \label{def-tmonomial}
    Let $\varsn = x_1, \hdots, x_n$ be variables. A \emph{$t$-monomial} is an element 
    \[\tmonom{m} = \tmonomlong{m} \in V[\varsn], \text{\ where\ } \Znplus{m} = (m_0, m_1, \hdots, m_n) \in \N^{n+1}.\]
\end{defn}

\begin{rmk}
    The polynomial ring $V[\varsn]$ has a natural $\Z^n$-grading, given by multidegree in each variable. Up to multiplication by a unit, the homogeneous elements are precisely the $t$-monomials. Homogeneous ideals, therefore, are precisely the ``$t$-monomial ideals"; that is to say, ideals generated by $t$-monomials.
\end{rmk}

\begin{thm}[\mbox{{\cite[Thm. III.2.3]{thesis}}}]
    \label{thm-phi}
    Let $(V,t)$ be a discrete valuation ring , and let $I \subset V[\varsn]$ be a $t$-monomial ideal. There is a unique monomial ideal $\monomify{I} \subset V[\varsnplus]$ such that
        \[
            \tmonom{m} \in I \quad \text{if and only if} \quad \monom{m} \in \monomify{I}.
        \]
    Equivalently, $\monomify{I}$ is the unique monomial ideal such that $\phi(\monomify{I})V[\varsn] = I$, where $\phi$ is the evaluation map $x_0 \mapsto t$. This correspondence preserves associated primes, in the sense that 
        \[
  \Ass_{V[\ux]}(V[\ux]/I) = \{ \phi(\monomify{Q})V[\varsn] \; : \; \monomify{Q} \in 
  \Ass_{V[x_0,\ux]}(V[x_0,\ux]/\monomify{I}) \}.
        \]
\end{thm}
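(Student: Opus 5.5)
We define $\monomify{I}$ directly as the $V$-span of all monomials $\monom{m}$ with $\tmonom{m}\in I$. It is an ideal: multiplying such a monomial by $x_0$ or by some $x_i$ corresponds to multiplying $\tmonom{m}$ by $t$ or by $x_i$, and $I$ is an ideal.

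Uniqueness holds because a monomial ideal of $V[\varsnplus]$ is determined by the set of monomials it contains. Indeed, a monomial lies in a monomial ideal iff it is divisible by one of the monomial generators; this uses the fine $\Z^{n+1}$-grading and the fact that $V$ is a domain.

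For the reformulation $\phi(\monomify{I})V[\varsn]=I$, we argue in two directions.
\begin{itemize}
\item Generators of $\monomify{I}$ map into $I$, so $\phi(\monomify{I})V[\varsn]\subseteq I$.
\item $I$ is generated by $t$-monomials $\tmonom{m}$, and each of these is $\phi(\monom{m})$ with $\monom{m}\in\monomify{I}$. This gives the reverse inclusion.
\end{itemize}
Conversely, suppose $J$ is any monomial ideal with $\phi(J)V[\varsn]=I$, and let $\tmonom{m}\in I$. Look at the $\Z^n$-degree $\und m$ component. There $\tmonom{m}$ lies in the ideal of $V$ generated by the elements $t^{a_0}$, where $\monom{a}$ runs over the monomials of $J$ with $\und a\le\und m$. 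Since $V$ is a DVR, this forces $a_0\le m_0$ for one of them, so $\monom{m}\in J$. Hence $J$ has the defining property, and $J=\monomify{I}$ by uniqueness.

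For associated primes we compare annihilators of homogeneous elements.
\begin{enumerate}
\item Each nonzero graded piece $[V[\varsn]/I]_{\und m}$ is a cyclic $V$-module, namely $V/t^cV$ or $V$. Hence every homogeneous element of $V[\varsn]/I$ is a unit times the image of some $t$-monomial. Likewise every multihomogeneous element of $V[\varsnplus]/\monomify{I}$ is a $V$-multiple of a monomial. Since we only take annihilators, the scalar from $V$ can be dropped, because $V$ is a domain and the quotient is $V$-free.
\item We check the colon compatibility $(I:\tmonom{m})'=(\monomify{I}:\monom{m})$. This follows straight from the monomial membership criterion: $\tmonom{b}\,\tmonom{m}\in I$ iff $\monom{b}\,\monom{m}\in\monomify{I}$.
\item Over a Noetherian graded module, the associated primes are exactly the primes that occur as annihilators of homogeneous elements. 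So the associated primes on each side are the prime ideals among these colon ideals.
\item The correspondence $J\mapsto\phi(J)V[\varsn]$ is a bijection between monomial ideals and $t$-monomial ideals, by the first part. We then check that under it the monomial primes $(x_i:i\in S)$ match the homogeneous primes of $V[\varsn]$. If $0\in S$ the image is $(t,x_i:i\in S\sm\{0\})$; otherwise it is $(x_i:i\in S)$.
\end{enumerate}
Together these give the stated equality of $\Ass$.

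The main obstacle is step 4: making sure that the colon ideals which are prime on one side correspond exactly to those which are prime on the other. Primality of a monomial ideal means it is generated by variables, and primality of a $t$-monomial ideal means it is generated by a subset of $\{t,x_1,\dots,x_n\}$. I would verify directly that these two descriptions correspond under $'$, since a $t$-monomial ideal contains $t^{m_0}$ iff its lift contains $x_0^{m_0}$.

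As a cross-check, one can also note two further facts. First, $x_0-t$ is a nonzerodivisor on $V[\varsnplus]/\monomify{I}$: on each $\und m$-graded piece $V[x_0]/(x_0^c)$, which is $V$-free of rank $c$, it acts with determinant $\pm t^c\neq0$. Second, the quotient by $x_0-t$ is $V[\varsn]/I$.
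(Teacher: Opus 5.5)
Your proposal is correct, but it takes a different route from the one the paper indicates.

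The paper defers the proof to the thesis. In the remark after the theorem, it points to the Eakin--Hochster method for monomials in a permutable regular sequence:
\begin{enumerate}
\item in any commutative ring, intersections and colons of ideals generated by monomials in such a sequence are computed exactly as for formal monomials;
\item when every subset of the sequence generates a prime, as for $t,x_1,\ldots,x_n$, ideals generated by powers of a subset are primary to the prime generated by that subset;
\item hence an irredundant monomial primary decomposition of $\monomify{I}$ is carried to one of $I$, and the associated primes are read off from it.
\end{enumerate}

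You avoid primary decomposition altogether. You get the membership criterion degree by degree from the $\Z^n$-grading and the fact that every ideal of $V$ is some $t^cV$. You then realize associated primes as prime annihilators of homogeneous elements. Your step 1 reduces these to colons by single ($t$-)monomials, and your step 2 shows they are compatible with $'$.

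Your argument is self-contained and elementary, but it depends on the grading and on $V$ being a DVR. The paper's route is more general: it needs no grading and works for any permutable prime sequence. It also gives more, namely the intersection rule and the transfer of full primary decompositions. The paper uses this later, e.g.\ for $\tSRi{\Delta}=\bigcap_F\ov{P_F}$ in Proposition~\ref{prop-tSRbasics}(e).

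Two small points should be written out.
\begin{itemize}
\item[(a)] In step 2, say that $(I:\tmonom{m})$ is $\Z^n$-graded and hence a $t$-monomial ideal, so that $'$ applies to it.
\item[(b)] In step 4, the classification of prime $t$-monomial ideals needs the usual argument. A minimal generator of total degree at least $2$ factors as a product of two $t$-monomials. One factor lies in the prime, which contradicts minimality via the membership correspondence.
\end{itemize}
With (b) in hand, the bijectivity of $J\mapsto\phi(J)V[\varsn]$ and the computation of $\phi((x_i:i\in S))V[\varsn]$ finish the proof. Your observation about $t^{m_0}$ versus $x_0^{m_0}$ does not do this on its own.
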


\begin{rmk} Since \cite{thesis} is unpublished, we observe that \orange{the} one may use the methods of
\cite[\S2]{EaHo74} to prove that if $\fA, \, \fB$ are ideals generated \pur{by} monomials in a permutable regular sequence  $\vect f k$ in an arbitrary commutative ring $T$, then $\fA \cap \fB$, and 
$\fA :_T \fB$ may be computed exactly as in the case of formal monomials in indeterminates: 
one may use the map
$\Z[\vect Xk] \to T$ such that $\vect Xk \mapsto \vect f k$ to get a correspondence. If, in addition, every subset of $\{\vect f k\}$ generates a prime ideal (i.e., the sequence is a permutable {\it prime} sequence), the correspondence preserves primary decomposition.  This is the case for the sequence  $t, \vect x n$ that one uses in the definition
of a $t$-Stanley-Reisner ring. 
\end{rmk}

\begin{rmk}
    Note that the associated primes of a graded ideal are themselves graded. So, the associated primes of a $t$-monomial ideal are $t$-monomial primes, i.e., ideals generated by some subset of $\{t, x_1, \hdots, x_n\}$.
\end{rmk}

\begin{defn} \label{def-tSRnotation}
     We define the following notations, analogous to those in Definition \ref{defn-SRnotation}:
    \benn
        \item The \emph{support} of a $t$-monomial $\tmonom{m} = \tmonomlong{m}$ is the simplex $\supp \tmonom{m} := \{\vertex{i} \; : \; m_i \neq 0\}$.

        \item A $t$-monomial $\tmonom{m}$ is called \emph{\sqf} if $m_i \leq 1$ for every $i$.

        \item A simplex $F \subseteq \zvertset$ defines a \sqf $t$-monomial
        \[
            \tmonomF{F} := \begin{cases}
                \displaystyle t\prod_{v_0 \not=\vertex{i}\in F}x_i &\text{ if } v_0 \in F\\
                \displaystyle \prod_{\vertex{i}\in F}x_i &\text{if } v_0 \notin F
            \end{cases}
        \]
        Note that $\tmonomF{F} = \phi(\zmonomF{F})$, where $\phi$ is the evaluation map $x_0 \mapsto t$.
        \item If $\Delta$ is a simplicial complex on $\zvertset$, define the \emph{$t$-Stanley-Reisner ideal} of $\Delta$ to be the \sqf $t$-monomial ideal
        $\tSRi{\Delta} := (\tmonomF{F}: F \notin \Delta)$, and define the \emph{$t$-Stanley-Reisner ring} of $\Delta$ to be the quotient
        $\tSRrcomp{\Delta} := V[\varsn]/\,\tSRi{\Delta}$.

        \item A simplex $F \subseteq \zvertset$ defines a $t$-monomial prime ideal $\ov{P_F} = \phi(P_F) V[\varsn]$, where $\phi$ is evaluation $x_0 \mapsto t$ and $P_F = (x_i : v_i \notin F) \subset V[\varsnplus]$. Note that $\ov{P_F} = (\tmonomF{\{\vertex{i}\}} : \vertex{i} \notin F)$.
        
    \een
\end{defn}

\begin{rmk} \label{rmk-tSR}
    Using Theorem \ref{thm-phi}, one can check that we have described a bijection between simplicial complexes on $n+1$ vertices and $t$-Stanley-Reisner rings; we call this bijection \emph{the $t$-Stanley-Reisner correspondence.} In fact, the correspondence in Theorem \ref{thm-phi} commutes with Stanley-Reisner correspondence, in the sense that $\tSRi{\Delta} = \phi(\SRi{\Delta})V[\varsn]$ and $\monomify{(\tSRi{\Delta})} = \SRi{\Delta}$. It follows that evaluation $x_0 \mapsto t$ induces a 
    $V$-algebra surjection $ \displaystyle \phi: \SRrcomp{\Delta} \to \tSRrcomp{\Delta} \cong \frac{\SRrcomp{\Delta}}{(x_0 - t)\SRrcomp{\Delta}}.$
    It should be noted that, while $x_0 - t$ is not a homogeneous element, it \textit{is} a nonzerodivisor in the maximal homogeneous ideal of $\SRrcomp{\Delta}$. This is enough to ensure that the quotient map $\phi$ is quite well-behaved.
\end{rmk}

\begin{prop}\label{prop-tSRbasics}
    Let $(V,t, K)$ be a discrete valuation ring, and let $\Delta$ be a \pur{nonempty} simplicial complex on $n+1$ vertices $\zvertset$. 
    \bena
        \item $\tSRrcomp{\Delta}$ is a $\Z^n$-graded $V$-module with components 
        that vanish if $\Zn{m} \notin \N^n$, while for $\Zn{m} \in \N^n$:
        
        \[
          \left[\tSRrcomp{\Delta}\right]_{\umm} = V \monompos{m} \cong \begin{cases}
              V &\text{ if } \supp{\monompos{m}} \in \link_\Delta \vertex{0}\\
              K &\text{ if } \supp{\monompos{m}} \in (\Delta-v_0)\sm\link_\Delta\{v_0\},\\
              0 &\text{if } \supp{\monompos{m}} \notin \Delta.
          \end{cases}
        \]
     
        In general $\tSRrcomp{\Delta}$ is \emph{not} free over $V$ unless $\Delta$ is a cone over $\vertex{0}$.
        \item The $\Z^n$-graded prime ideals in $\tSRrcomp{\Delta}$ are precisely the images of the primes $\ov{P_F}$ for $F \in \Delta$. We will write $\ov{P_F}$ to mean $\ov{P_F}\tSRrcomp{\Delta}$ from now on.
      
        \item The Krull dimension of $\tSRrcomp{\Delta}$ is $\dim \tSRrcomp{\Delta} =\dim \Delta + 1.$
        In fact, for any graded prime $\ov{P_F}$, 
        we have $\height \ov{P_F} = \dim \; \link_\Delta F + 1.$
        \item Let $\m = \ov{P_{\nl}}= (t, \varsn)$ be the graded maximal ideal of $\tSRrcomp{\Delta}$. Then $\depth_\m \tSRrcomp{\Delta} = \depth_{(\varsnplus)}K[\Delta]$.

        \item The associated primes of $\tSRrcomp{\Delta}$, which are the same as the minimal primes, are $\ov{P_F}$ for facets $F$ of $\Delta$. In particular, $\tSRrcomp{\Delta}$ is always reduced.
    \een
\end{prop}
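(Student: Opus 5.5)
The plan is to deduce every part from the identification $\tSRrcomp{\Delta}\cong \SRrcomp{\Delta}/(x_0-t)\SRrcomp{\Delta}$ of Remark~\ref{rmk-tSR}, together with Theorem~\ref{thm-phi} and Proposition~\ref{prop-ADeltabasics}.

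For (a), I will use the $\Z^n$-grading (ignoring $x_0$). The degree-$\umm$ component of $\SRrcomp{\Delta}$ is the free $V$-module on the monomials $x_0^{m_0}\monompos{m}$ whose support lies in $\Delta$. Modding out by $x_0-t$ identifies $x_0^{m_0}\monompos{m}$ with $t^{m_0}\monompos{m}$, so the component is the cyclic $V$-module $V\monompos{m}$.

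Its annihilator is computed with Theorem~\ref{thm-phi}: $t^{m_0}\monompos{m}\in\tSRi{\Delta}$ if and only if $x_0^{m_0}\monompos{m}\in \SRi{\Delta}$, that is, if and only if $\supp(x_0^{m_0}\monompos{m})\notin\Delta$. This gives three cases.
\begin{itemize}
\item If $\supp\monompos{m}\cup\{v_0\}\in\Delta$ (equivalently $\supp\monompos{m}\in\link_\Delta v_0$), no power of $t$ kills $\monompos{m}$, so the component is $V$.
\item If $\supp\monompos{m}\in\Delta$ but $\supp\monompos{m}\cup\{v_0\}\notin\Delta$, then $t\monompos{m}=0$ while $\monompos{m}\neq 0$, so the component is $K$.
\item Otherwise the component is $0$.
\end{itemize}
Non-freeness follows: if $\Delta$ is not a cone over $v_0$, then some face of $\Delta - v_0$ is not in $\link_\Delta v_0$ (take a facet not containing $v_0$), so a $K$-summand appears.

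For (b) and (e), the graded primes of $V[\ux]$ are the $t$-monomial primes, i.e.\ the ideals generated by subsets of $\{t,x_1,\dots,x_n\}$. These are exactly the $\ov{P_F}$ for $F\subseteq\zvertset$. Such a prime contains $\tSRi{\Delta}$ if and only if $F\in\Delta$, by the usual argument: $\ov{P_F}\supseteq\tmonomF{G}$ if and only if $G\not\subseteq F$.

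The associated primes come from Theorem~\ref{thm-phi}, which says the correspondence preserves associated primes. Proposition~\ref{prop-ADeltabasics}(e) with $A=V$ gives $\Ass \SRrcomp{\Delta}=\{P_F: F\text{ a facet}\}$, since $\Ass V=\{0\}$. These primes are all minimal and the ideal is their intersection, so $\tSRi{\Delta}$ is radical and $\tSRrcomp{\Delta}$ is reduced.

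For (c), the quotient $V[\ux]/\ov{P_F}$ is a polynomial ring over $V$ or over $K$, according to whether $v_0\in F$. Either way it has dimension $|F|$, and $V[\ux]$ is catenary and equidimensional of dimension $n+1$. Hence $\height \ov{P_F}=n+1-|F|$ in $V[\ux]$.

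The main obstacle is to get the height in the quotient $\tSRrcomp{\Delta}$ right. I will compute it as the maximum of $|G|-|F|$ over facets $G\supseteq F$, which is $\dim\link_\Delta F+1$. This uses that the minimal primes are the $\ov{P_G}$ for facets $G$, and that saturated chains of graded primes correspond to chains of faces. It is cleanest to argue in the local ring at $\ov{P_F}$, mirroring the proof of Proposition~\ref{prop-ADeltabasics}(c). The total dimension is then the case $F=\nl$.

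For (d), $x_0-t$ lies in the homogeneous maximal ideal $\m'=(t,x_0,\ux)$ of $\SRrcomp{\Delta}$. It is a nonzerodivisor there, since it lies in no associated prime $P_F$: its image modulo $P_F$ is nonzero when $v_0\in F$ (it is $x_0-t$) and equals $-t\neq0$ otherwise. Therefore $\depth \tSRrcomp{\Delta}=\depth_{\m'}\SRrcomp{\Delta}-1$. By Proposition~\ref{prop-ADeltabasics}(d) this equals $1+\depth K[\Delta]-1=\depth K[\Delta]$.

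One point needs care: depth is computed after localizing at $\m'$. Since $\m'$ is the graded maximal ideal, localizing there is harmless, and the depth of $\tSRrcomp{\Delta}$ at $\m$ agrees with its depth after localization.
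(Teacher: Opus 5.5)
Your proposal is correct and is essentially the paper's own proof: (a) by reading off the annihilator of $\monompos{m}$ through Theorem~\ref{thm-phi}, (b) and (e) from Theorem~\ref{thm-phi} together with Proposition~\ref{prop-ADeltabasics}, (c) from the facet primes $\ov{P_G}\supseteq\ov{P_F}$ and chains of faces, and (d) from the nonzerodivisor $x_0-t$ and Proposition~\ref{prop-ADeltabasics}(d). A few steps you assert are true but need a justification:
\emph{Reducedness.} You need the identity $\tSRi{\Delta}=\bigcap_{F}\ov{P_F}$, since having only minimal associated primes is not enough; it follows by transporting $\SRi{\Delta}=\bigcap_F P_F$ through the $t$-monomial correspondence, as the paper does.
\emph{Heights.} The heights $\height\ov{P_F}=n+1-|F|$ in $V[\ux]$, and likewise $\height(\ov{P_F}/\ov{P_G})=|G\sm F|$ in $V[\ux]/\ov{P_G}$, hold because these primes are generated by regular sequences of those lengths. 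They do not follow merely from $V[\ux]$ being catenary and equidimensional: in $V[x_1]$ the prime $(tx_1-1)$ has height $1$ while $\dim V[x_1]/(tx_1-1)=0$.
\emph{Total dimension.} Writing $\dim\tSRrcomp{\Delta}=\max_G\dim V[\ux]/\ov{P_G}=\max_G|G|$ gives the result without having to justify $\dim\tSRrcomp{\Delta}=\height\m$.
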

\begin{proof}
    Statement (a) follows from the observation that $t \cdot \monompos{m} = 0$ in $\tSRrcomp{\Delta}$ if and only if $\supp (t\monompos{m}) = \{\vertex{0}\} \cup \supp(\monompos{m}) \notin \Delta$. Statement (b) follows in a straight-forward fashion from Thm.~\ref{thm-phi} and Prop.~\ref{prop-ADeltabasics}.

    For the proof of (c), note that it follows from (b) that we have an inclusion-reversing bijection between graded prime ideals in $\tSRrcomp{\Delta}$ and faces of $\Delta$. The homogeneous maximal ideal of $\tSRrcomp{\Delta}$ corresponds to the empty face $\nl \in \Delta$.

    Note that the ring is catenary, the chains  of primes corresponding to the faces in the chains indicated below are saturated, and that all minimal primes are graded. Consequently,
    for $P = \ov{P_F} =  (\tmonomF{\vertex{i}} : \vertex{i} \notin F)$ where $F \in \Delta$, we have
    \begin{align*}
        \height P &= \max \{h : \exists \; G_i \in \Delta, \;  F \subset G_1 \subset \hdots \subset G_h\}\\
        &= \max\{|G - F| : F \subseteq G \in \Delta\}\\
        &= \max \{|H| : H \in \link_\Delta F\}\\
        &= \dim \link_\Delta F + 1.
    \end{align*}

In an $\N$-graded algebra over a local ring, the height of the unique  homogeneous maximal ideal is the Krull dimension. Therefore, in particular, we have
    $\dim \tSRrcomp{\Delta} = \height (t,\varsn) = \dim \link_\Delta \nl + 1 = \dim \Delta + 1$.

    For (d), we have observed that $\tSRrcomp{\Delta}$ is the quotient of $\SRrcomp{\Delta}$ by a nonzerodivisor in the homogeneous maximal ideal. It follows that
    \begin{align*}
        \depth_{\m}\tSRrcomp{\Delta} &= \depth_{(t,\varsnplus)}\SRrcomp{\Delta} - 1\\
        &= \depth_t V + \depth_{(\varsnplus)}K[\Delta] - 1 & &\text{by Prop.~\ref{prop-ADeltabasics} (d)}\\
        &= \depth_{(\varsnplus)}K[\Delta] & &\text{since } \depth_t V = 1.
    \end{align*}

The assertion about associated primes in (e) follows from
Proposition \ref{prop-ADeltabasics} (e) and Theorem \ref{thm-phi}. 
Moreover, the monomial intersection rule in Remark~2.19 gives that 
$\displaystyle
\tSRi{\Delta}=\bigcap_{F\text{ a facet of }\Delta}\ov{P_F}$ in $V[\ux]$.
Since every ideal in this intersection is prime, $\tSRrcomp{\Delta}$ is reduced.
\end{proof}

\section{Koszul homology and cohomology} \label{sec-koszul}
There are two famous theorems in Stanley-Reisner theory which are commonly referred to as \textit{Hochster's formula:} one for Koszul homology/cohomology, and one for local cohomology. These formulas employ a common strategy. We start with a multigraded complex of free $K[\Delta]$-modules, the homology or cohomology of which computes the algebraic invariant in which we are interested. The graded pieces are then described in terms of simplicial cochain complexes of certain sub-complexes of $\Delta$. We thereby convert an algebraic computation into a series of smaller computations that can often be simplified using insights and standard results from topology.

The goal of \S\ref{sec-koszul} is to adapt Hochster's formula for Koszul homology and cohomology to $t$-Stanley-Reisner rings. We accomplish this by providing short exact sequences that relate $\Homol(t, \varsn; \tSRrcomp{\Delta})$ to $\Homol(\varsnplus; V[\Delta])$, which can be computed with the original Hochster's formula. As an application, we compute the Betti numbers of the $t$-Stanley-Reisner ring defined by a triangulation of $\Prj^2_{\R}$.
Later, in \S\ref{sec-local-cohomology}, we will also see the adaptation of the sister formula for local cohomology.

\subsection{Background: Koszul Homology and Cohomology} 

\begin{defn} \label{def-mappingcone}
    Let $\psi_\bullet:(A_\bullet, d^A_\bullet) \to (B_\bullet, d^B_\bullet)$ be a chain map. The \emph{mapping cone} of $\psi$ is the complex $\mapcone{\psi}_\bullet$ with terms $\mapcone{\psi}_i := A_{i-1} \oplus B_i$ and $i^{th}$ differential
    \[
       \begin{pmatrix}
            -d^A_{i-1} & 0\\
            \psi_{i-1} & d^B_i
        \end{pmatrix}: A_{i-1}\oplus B_i \to A_{i-2}\oplus B_{i-1} \qquad x \oplus y \mapsto -d^A_{i-1} (x) \oplus (\psi_{i-1}(x) + d^B_i (y)).
    \]

See \cite[\S10 p.~650]{rotman} or 
\cite[\S1.5]{weibel}.
    
    For convenience, we state the cohomological version separately: if $\psi^\bullet: (A^\bullet, d_A^\bullet) \to (B^\bullet, d_B^\bullet)$ is a map of complexes then $\mapcone{\psi}^\bullet$ is the complex with terms $\mapcone{\psi}^i := A^{i+1} \oplus B^i$ and $i^{th}$ differential
    \[
        \begin{pmatrix}
            -d_A^{i+1} & 0\\
            \psi^{i+1} & d_B^i
        \end{pmatrix}: A^{i+1}\oplus B^i \to A^{i+2}\oplus B^{i+1} \qquad
    x \oplus y \mapsto -d_A^{i+1}(x) \oplus (\psi^{i+1}(x) + d_B^i(y)).
    \]
\end{defn}

By \cite[Lemma 10.38]{rotman} or
\cite[1.5.2, p.~19]{weibel}, we have the following:

\begin{fact} If $\psi_\bullet: A_\bullet \to B_\bullet$ is a chain map, there is a short exact sequence of complexes
    \[
       0 \to B \to \mapcone{\psi} \to A[-1] \to 0
    \]
    and thus an induced long exact sequence
    \[
       \hdots \to \Homol_i(A) \to \Homol_i(B) \to \Homol_i(\mapcone{\psi}) \to \Homol_{i-1}(A) \to \hdots
    \]
    Likewise, if $\psi^\bullet: A^\bullet \to B^\bullet$ is a \textit{cochain} map, we have exact sequences
    \begin{align*}
        & 0 \to B \to \mapcone{\psi} \to A[1] \to 0\\
        \hdots &\to\Homol^i(A) \to \Homol^i(B) \to \Homol^i(\mapcone{\psi}) \to \Homol^{i+1}(A) \to \hdots
    \end{align*}
\end{fact}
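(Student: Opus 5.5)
The plan is to construct the two maps of complexes by hand, check they are chain maps and give a termwise split short exact sequence, and then apply the snake lemma. I treat the homological version first. The cohomological one is the same argument with the indices reversed.

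\textbf{Step 1: the cone is a complex.} Writing $D_i$ for the $i^{th}$ differential of $\mapcone{\psi}$, I first check that $D_{i-1}\circ D_i=0$. Multiplying the two block matrices, the diagonal entries are $d^A d^A=0$ and $d^B d^B=0$. The off-diagonal entry is $-\psi_{i-2}d^A_{i-1}+d^B_{i-1}\psi_{i-1}$, which vanishes because $\psi$ is a chain map.

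\textbf{Step 2: the short exact sequence.} I define $\iota_i\colon B_i\to A_{i-1}\oplus B_i$ by $y\mapsto 0\oplus y$, and $\pi_i\colon A_{i-1}\oplus B_i\to A_{i-1}=A[-1]_i$ by $x\oplus y\mapsto x$. Here $A[-1]$ carries the differential $-d^A$, which is the sign convention under which the following checks work.
\begin{itemize}
\item $D\iota(y)=0\oplus d^By=\iota(d^By)$, so $\iota$ is a chain map.
\item $\pi D(x\oplus y)=-d^Ax$, which is the differential of $A[-1]$ applied to $\pi(x\oplus y)$, so $\pi$ is a chain map.
\item In each degree the sequence $0\to B_i\to A_{i-1}\oplus B_i\to A_{i-1}\to 0$ is the standard split exact sequence of a direct sum.
\end{itemize}
Together these give the short exact sequence of complexes $0\to B\to\mapcone{\psi}\to A[-1]\to 0$.

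\textbf{Step 3: the long exact sequence.} The snake lemma applied to this short exact sequence gives a long exact sequence. Since $\Homol_i(A[-1])=\Homol_{i-1}(A)$ (the sign on the differential does not change homology), it reads
\[
\cdots\to\Homol_i(B)\to\Homol_i(\mapcone{\psi})\to\Homol_{i-1}(A)\xrightarrow{\ \partial\ }\Homol_{i-1}(B)\to\cdots.
\]
It remains to identify the connecting map $\partial$. Take a cycle $x\in A_{i-1}$. It lifts to $x\oplus 0\in\mapcone{\psi}_i$, and $D(x\oplus 0)=-d^Ax\oplus\psi(x)=0\oplus\psi(x)=\iota(\psi(x))$. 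By the construction of the connecting map, $\partial[x]=[\psi(x)]$, so $\partial=\psi_*$. After reindexing, this is exactly the displayed sequence in the statement.

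\textbf{Cochain version.} Here $\iota(y)=0\oplus y$ and $\pi(x\oplus y)=x\in A^{i+1}=A[1]^i$. The identical computations give $0\to B\to\mapcone{\psi}\to A[1]\to0$. The connecting map $\Homol^i(\mapcone{\psi})\to\Homol^{i+1}(A)$ is the one induced by $\pi$, and the map $\Homol^{i+1}(A)\to\Homol^{i+1}(B)$ is again $\psi_*$.

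The only real point of care is the signs. One must give $A[\mp1]$ the negated differential so that $\pi$ is a chain map, and one must confirm that the connecting homomorphism is $\psi_*$ rather than $-\psi_*$. Under the convention of Definition~\ref{def-mappingcone}, the computation in Step 3 gives exactly $+\psi_*$.
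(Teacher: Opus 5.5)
Your proof is correct and is the standard argument behind the references the paper cites for this fact (Rotman, Lemma 10.38; Weibel, 1.5.2). The paper gives no proof of its own beyond those citations, and your sign bookkeeping checks out: the negated differential on $A[\mp1]$ is what makes $\pi$ a chain map, and under the paper's cone convention the connecting map is $+\psi_*$. One small wording slip: in the cochain paragraph, the map $\Homol^i(\mapcone{\psi})\to\Homol^{i+1}(A)$ is the map induced by $\pi$, not the connecting map; the connecting map is $\Homol^{i+1}(A)=\Homol^i(A[1])\to\Homol^{i+1}(B)$, which you correctly identify as $\psi_*$.
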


\begin{defn} \label{def-koszul}
    Let $R$ be a commutative ring. Given a sequence of elements in $R$, we define the \emph{Koszul complex} inductively as follows. For any element $f \in R$, define $\Kos_\bullet(f; R)$ to be
    \[
         0 \to \Kos_1 \xrightarrow{\cdot f} \Kos_0 \to 0,
    \]
    where $\Kos_0 = \Kos_1 = R$. If $f \in R$ and $\seq{g} = g_1, \hdots, g_n \in R$, then $\Kos_\bullet(f, \seq{g};R)$ is the mapping cone of
    \[
         \Kos_\bullet(\seq{g}; R) \xrightarrow{\cdot f} \Kos_\bullet(\seq{g}; R).
    \]
    If $M$ is an $R$-module, the Koszul complex of $\seq{f}$ on $M$ is defined as $\Kos_\bullet(\seq{f};M) := M \otimes_R \Kos_\bullet(\seq{f};R)$. The \emph{Koszul homology} of $\seq{f}$ on $M$ is the homology of $\Kos_\bullet(\seq{f}; M)$ and is denoted by $\Homol_\bullet(\seq{f};M)$. 

    The \emph{Koszul co-complex} or \emph{cohomological Koszul complex} of $\seq{f}$ on $M$ is the dual complex $\Kos^\bullet(\seq{f};M) := \Hom_R(\Kos_\bullet(\seq{f};R), M)$. The \emph{Koszul cohomology} of $\seq{f}$ on $M$ is the cohomology of $\Kos^\bullet(\seq{f}; M)$ and is denoted by $\Homol^\bullet(\seq{f};M)$. 
\end{defn}

\begin{facts} \label{facts-koszul}
    Let $R$ be a commutative ring, and let $\seq{f} = f_1, \hdots, f_n \in R$. Let $M$ be an $R$-module.
    \bena
        \item We have $\Homol^i(\seq{f}; M) \cong \Homol_{n-i}(\seq{f};M)$ for every $0 \leq i \leq n$. In fact, the dual complex $\Kos^\bullet(\seq{f};M)$ is isomorphic to $\Kos_\bullet(\seq{f};M)$ with the terms numbered in the reverse order. 
       
        \item $\Kos_i(f_1, \hdots, f_n;R)$ is a free $R$-module of rank $\binom{n}{i}$.
        \item We have $\Homol_0(\seq{f};M) \cong \Homol^{n}(\seq{f};M) \cong M/\seq{f}M$. Moreover, if $\seq{f}$ is a regular sequence on $M$, then $\Kos_\bullet(\seq{f};M)$ has 0 homology in positive degrees and is a free resolution of $M/(\seq{f})M$ if $M$ is $R$-free, e.g., if $M = R$.
       
        \item If $\psi:R \to S$ is a map of rings, then $\Kos_\bullet(\psi(\seq{f});S) \cong \Kos_\bullet (\seq{f};R) \otimes_R S$.
        \item \label{koszul-facts-matrix} If $B$ is an invertible $n \times n$ matrix over $R$, then $\Kos_\bullet(\seq{f};R) \cong \Kos_\bullet(B\seq{f}; R)$.
        \item \label{koszul-facts-ses} For any $f, \seq{g} \in R$ and any index $i > 0$, there is a short exact sequence
        \[
        0 \to \frac{\Homol_i(\seq{g};R)}{f \cdot \Homol_i(\seq{g};R)} 
           \to \Homol_i(f, \seq{g};R) 
           \to \Ann_{\Homol_{i-1}(\seq{g};R)}f   \to 0.
        \]
   \item The elements  $\seq{f}$ annihilate all of the modules 
       $\Homol_{\bullet}(\seq{f};M)$ and $\Homol^{\bullet}(\seq{f};M)$.
        
    \een
\end{facts}

There is a compendium of properties of Koszul homology in \cite[Thm.~3.4]{BHM24}. Part (f)~above follows from 
\cite[IV-2, Prop.\,1]{serre}.

\begin{rmk}\label{Koszul-simp} In a standard alternative treatment (see, for example, \cite[Ch.\,IV.,\,A)]{serre}\pur{)} one constructs the Koszul complex from the free $R$-module $G = \bigoplus_{i=1}^n Ru_i$ as the exterior algebra $\bigwedge_R^{\bullet}(G) =
\bigoplus_{i=0}^n \bigwedge_R^i(G)$ in which the differential is the unique $R$-derivation of degree $-1$ extending the map $\bigwedge^1(G) = G \to R = \bigwedge^0(G)$ such that $u_i  \mapsto f_i$. The derivation property means when $u$ is homogeneous of degree $k$, we have $d(u \wedge u') = du \wedge u' + (-1)^k u\wedge du'$.  

The simplicial homology of  $\Delta$ with coefficients in a ring $R$,
where $\Delta$ is a simplicial complex contained in the $\pur{(}n\pur{-1)}$-simplex 
$\Sigma$ with vertices  $\{\vect v n\}$, may be recovered (or even defined) as follows.  In the formulation of the Koszul complex above, take the $u_i$ to correspond bijectively with
the $v_i$. Let $\seq{1}$ denote the sequence $f_1 := 1, \hdots, f_n := 1$.
If $F$ is the oriented class of 
$(v_{i_0}, \ldots, v_{i_k})$ in $\Sigma$, let 
$u_F := u_{i_0} \wedge \cdots \wedge u_{i_k} \in\ \Kos_{\bullet}(\und{1};\,R)$
correspond to $F$. For every $j \in \N$, place
$\cK_j(\und{1};\, R)$ in simplicial degree $j-1$, retaining its differential.
The subcomplex spanned by the basis elements $u_F$ is 
the reduced simplicial chain complex of $\Delta$ with coefficients in $R$.  
To treat cohomology, for every $j \in \N$ place $\Kos^j(\und{1};R)$ in degree $j-1$ and form the 
quotient by the subcomplex spanned by the dual basis vectors corresponding to non-faces of 
$\Delta$. This gives  the reduced simplicial cochain complex.
\end{rmk}

\subsection{Hochster's formula} 
We will now give the original statement of Hochster's formula and  provide the short exact sequences that relate Koszul (co)homology of $t$-Stanley-Reisner rings to Koszul (co)homology of Stanley-Reisner rings.

\begin{thm}[{\cite[Thm 5.2]{hochform}}] \label{thm-og-hoch-koszulver}
   Let $A$ be any commutative ring, and let $\Delta$ be a simplicial complex on vertices $V = \zvertset$. Consider $A$ as an $A[\varsnplus]$-module via $A \cong A[\varsnplus]/ (\varsnplus)$. Then we have isomorphisms of $A[\varsnplus]$-modules
   \begin{align*}
       \Tor_i^{A[\varsnplus]}(A, A[\Delta]) &\cong \bigoplus_{\begin{array}{c}
            -1 \leq q \leq n - i\\
            T \subseteq V, \; |T| = n - i-q
        \end{array}} \simpHomol^q(\Delta- T;A),\\
        \\
        \Ext_{A[\varsnplus]}^j(A, A[\Delta]) &\cong \bigoplus_{\begin{array}{c}
            -1 \leq q \leq j-1\\
            T \subseteq V, \; |T| = j - q - 1        \end{array}} \simpHomol^q(\Delta- T;A).
   \end{align*}
\end{thm}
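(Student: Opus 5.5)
The plan is to compute both sides with the Koszul complex on $\varsnplus$ and to read off each $\Z^{n+1}$-graded piece as a simplicial (co)chain complex of a deletion. First, $\varsnplus$ is a regular sequence on $A[\varsnplus]$. So by Facts~\ref{facts-koszul}(c), $\Kos_\bullet(\varsnplus; A[\varsnplus])$ is a free resolution of $A$. Tensoring with $A[\Delta]$, respectively applying $\Hom(-,A[\Delta])$, gives
\[
\Tor_i(A,A[\Delta])\cong \Homol_i(\varsnplus;A[\Delta]), \qquad \Ext^j(A,A[\Delta])\cong\Homol^j(\varsnplus;A[\Delta]).
\]
By Facts~\ref{facts-koszul}(a) it then suffices to handle the Koszul homology and to reindex for Ext via $\Homol^j\cong\Homol_{n+1-j}$. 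The check is that $n+1-j$ in the Tor formula turns $|T|=n-i-q$ into $|T|=j-q-1$, and the range $q\le n-i$ turns into $q\le j-1$.

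Next I will use the $\Z^{n+1}$-grading. The complex $\Kos_\bullet(\varsnplus;A[\Delta])$ is multigraded, with basis $u_S\cdot \ux^{\umm}$ for $S\subseteq V$ and $\supp \ux^{\umm}\in\Delta$, where $\deg u_S=\sum_{v_i\in S}e_i$. Fix a multidegree $\und{b}$. I will show the $\und b$-strand is exact unless $\und b$ is a $0/1$ vector $\und b=\chi_W$, using a contracting homotopy. If some $b_i\ge 2$, or more generally some coordinate $i$ has $b_i\ge1$ and the monomial factor can absorb $x_i$, then the strand is a mapping cone of multiplication by $x_i$, which is an isomorphism in that degree, so it is acyclic. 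For $\und b=\chi_W$, the basis elements are $u_S\,\ux^{W\sm S}$ with $S\subseteq W$ and $W\sm S\in\Delta$. Put $T=V\sm W$ and $G=W\sm S$. The differential removes a vertex of $S$ and adds it to $G$, so in terms of the face $G\in\Delta-T$ it is the simplicial coboundary. Placing $u_S$ in Koszul degree $|S|=|W|-|G|$ identifies the strand with $\simpchain^\bullet(\Delta-T;A)$, with $q=\dim G=|G|-1$ corresponding to Koszul degree $i=|W|-q-1=n-|T|-q$. That gives exactly $|T|=n-i-q$. The condition $-1\le q$ is automatic, and $|T|\ge0$ gives $q\le n-i$.

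The step I expect to be the main obstacle is verifying signs, so that the Koszul differential on the strand matches the simplicial coboundary up to an automorphism. To handle it, I would use the exterior-algebra description of Remark~\ref{Koszul-simp}: the Koszul differential $u_S\mapsto\sum\pm x_i u_{S\sm v_i}$ corresponds, after the complementation $S\mapsto W\sm S$ (Hodge-star type duality in $\bigwedge(A^W)$), to $\ux^{G}\mapsto \sum\pm \ux^{G\cup v_i}$. I would fix the identification $u_S\mapsto \epsilon(S,W\sm S)\langle W\sm S\rangle^*$, where $\epsilon$ is the shuffle sign, and check that it commutes with the differentials up to a global sign per degree. The second point needing care is the acyclicity of the non-squarefree strands. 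There I would pick an index $i$ with $b_i\ge1$ such that every basis element in the strand either contains $u_i$ or has monomial divisible by $x_i$, and use the homotopy $u_S\ux^{\umm}\mapsto u_{S\cup\{i\}}\ux^{\umm}/x_i$; this works cleanly precisely when $b_i\ge2$ or $v_i$ lies in the relevant link. Finally, summing over all $W$ (equivalently over all $T$) and all $q$ gives the stated direct-sum decompositions as $A[\varsnplus]$-modules, with the variables acting by zero by Facts~\ref{facts-koszul}(g).
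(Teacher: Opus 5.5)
Your proposal is correct. The paper does not prove this statement; it quotes it from Hochster. So the relevant comparison is with the standard proof, and that is what you give:
\begin{itemize}
\item You compute $\Tor$ and $\Ext$ with the Koszul complex on $x_0,\ldots,x_n$.
\item You show that every strand of multidegree $\underline{b}$ with some $b_i\ge 2$ is acyclic. Multiplication by $x_i$ from degree $c-e_i$ to degree $c$ in $A[\Delta]$ is an isomorphism whenever $c_i\ge 2$, so such a strand is the cone of an isomorphism.
\item You identify the strand of degree $\chi_W$ with $\simpchain^\bullet(\Delta-T;A)$, where $T=V\setminus W$, via $S\mapsto G=W\setminus S$. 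Koszul degree $k$ goes to cochain degree $|W|-k-1$. This is the dictionary of Remark~\ref{Koszul-simp}, and the Koszul analogue of the \v{C}ech-strand sketch the paper gives for Theorem~\ref{thm-og-hochsform-localcohomver}.
\end{itemize}
Your sign bookkeeping works: the shuffle sign makes the squares commute up to a sign that depends only on the Koszul degree.

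There is one harmless slip in your side remark. The condition ``every basis element contains $u_i$ or has monomial divisible by $x_i$'' holds for every $i$ with $b_i\ge 1$, so it is not what makes the homotopy valid. What is actually needed is that dividing by $x_i$ preserves monomial supports. The condition $b_i\ge 2$ guarantees this, and that is the only case your argument uses.
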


\begin{rmk}
    By Facts \ref{facts-koszul} (c), the Koszul complex $\Kos_\bullet (\varsnplus;A[\varsnplus])$ is a free resolution of $A$ as an $A[\varsnplus]$-module. It follows that
    \begin{align*}
        \Tor_i^{A[\varsnplus]}(A, A[\Delta]) &= \Homol_i(\varsnplus; A[\Delta]),\\
        \Ext_{A[\varsnplus]}^j(A, A[\Delta]) &= \Homol^j(\varsnplus; A[\Delta]) \cong \Homol_{n+1-j}(\varsnplus; A[\Delta]).
    \end{align*}
\end{rmk}

\begin{thm}
    \label{thm-koszul-ses} \LetVDelta. Then for every $i \in \N$ there is a short exact sequence 
    of $V[\varsnplus]$-modules  
    \[
      0 \to  \frac{\Homol_i(\varsnplus;\SRrcomp{\Delta})}{t \cdot \Homol_i(\varsnplus;\SRrcomp{\Delta})}      \to 
      \Homol_i(t, \varsn; \tSRrcomp{\Delta}) \to \Ann_{\Homol_{i-1}(\varsnplus;\SRrcomp{\Delta})}t 
      \to 0.
    \]
    Likewise, for every $j\in \N$, we have a short exact sequence
    \[
      0 \to  \frac{\Homol^j(\varsnplus;\SRrcomp{\Delta})}{t \cdot \Homol^j(\varsnplus;\SRrcomp{\Delta})} \to \Homol^j(t, \varsn; \tSRrcomp{\Delta}) \to \Ann_{\Homol^{j+1}(\varsnplus;\SRrcomp{\Delta})}t  \to 0.
    \]
\end{thm}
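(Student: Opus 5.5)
The plan is to change the sequence $x_0, \ux$ to $x_0 - t, \ux$ and then split off $x_0 - t$ with Facts~\ref{facts-koszul}(f). Over $V[x_0,\ux]$ the matrix sending $(x_0,x_1,\ldots,x_n)$ to $(x_0-t, x_1,\ldots,x_n)$ is not invertible, so the comparison has to be made through homology rather than at the level of complexes. Write $R = \SRrcomp{\Delta}$ and $g = x_0 - t$. By Remark~\ref{rmk-tSR}, $g$ is a nonzerodivisor on $R$ and $R/gR \cong \tSRrcomp{\Delta}$.

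\textbf{Step 1.} Identify $\Kos_\bullet(t,\ux;\tSRrcomp{\Delta})$ with $\Kos_\bullet(x_0,\ux;R)\otimes_R R/gR$. Since $x_0 \equiv t$ modulo $g$, Facts~\ref{facts-koszul}(d) gives this isomorphism.

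\textbf{Step 2.} Compute the homology of that tensor product in two ways. Because $g$ is a nonzerodivisor on $R$, the complex $\Kos_\bullet(x_0,\ux;R)\otimes_R R/gR$ is the mapping cone of multiplication by $g$ on $C := \Kos_\bullet(x_0,\ux;R)$, up to quasi-isomorphism. The long exact sequence of that cone gives
\[
0 \to \frac{\Homol_i(x_0,\ux;R)}{g\,\Homol_i(x_0,\ux;R)} \to \Homol_i(t,\ux;\tSRrcomp{\Delta}) \to \Ann_{\Homol_{i-1}(x_0,\ux;R)} g \to 0.
\]
This is exactly the argument behind Facts~\ref{facts-koszul}(f), applied to the module $R$ rather than the ring. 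Concretely, there is a short exact sequence of complexes $0 \to C \xrightarrow{g} C \to C/gC \to 0$, which is exact since $C$ is a finite direct sum of copies of $R$. Its long exact sequence yields the kernel and cokernel description.

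\textbf{Step 3.} Replace $g$ by $t$ on these modules. By Facts~\ref{facts-koszul}(g), $x_0$ annihilates $\Homol_\bullet(x_0,\ux;R)$, so $g = x_0 - t$ acts as $-t$. Therefore $g\,H = tH$ and $\Ann_H g = \Ann_H t$, which gives the first sequence. For cohomology I would either run the same argument with $\Hom_R(\Kos_\bullet,R)$, using the sequence $0\to C^\bullet \xrightarrow{g} C^\bullet \to C^\bullet/gC^\bullet\to 0$, or apply the self-duality of Facts~\ref{facts-koszul}(a) with $n+1$ elements on $R$ and $n+1$ elements on $\tSRrcomp{\Delta}$.

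The self-duality route translates as follows. We have $\Homol^j(t,\ux;\tSRrcomp{\Delta}) = \Homol_{n+1-j}$, and the flanking terms become $\Homol_{n+1-j} = \Homol^{j}$ and $\Homol_{n-j} = \Homol^{j+1}$. This matches the stated indices.

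\textbf{Main obstacle.} The only delicate point is the exactness in Step 2, namely that reducing modulo $g$ commutes with forming Koszul complexes and that $g$ is regular on every term. Both follow because the Koszul complex is termwise free over $R$ and $g$ is a nonzerodivisor on $R$. I would also check that all the maps are $V[x_0,\ux]$-linear, so that the sequences are sequences of $V[x_0,\ux]$-modules as claimed.
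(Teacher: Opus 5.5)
Your argument is correct and is essentially the paper's proof: the short exact sequence $0\to \Kos_\bullet(\varsnplus;\SRrcomp{\Delta})\xrightarrow{\cdot(x_0-t)}\Kos_\bullet(\varsnplus;\SRrcomp{\Delta})\to\Kos_\bullet(t,\ux;\tSRrcomp{\Delta})\to 0$, then Facts~\ref{facts-koszul}(g) to see that $x_0-t$ acts as $-t$ on $\Homol_\bullet(\varsnplus;\SRrcomp{\Delta})$, and self-duality (Facts~\ref{facts-koszul}(a)) for the cohomological sequence. Your opening framing about changing the sequence and invoking Facts~\ref{facts-koszul}(f) is not needed, since the concrete sequence in your Step~2 already does all the work.
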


\begin{proof}
    The short exact sequence for $\Homol_\bullet$ is a corrected version of a result proved in \cite[Thm IV.4.12]{thesis}. From the short exact sequence of complexes
    \[
       0 \to \Kos_\bullet(\varsnplus; \SRrcomp{\Delta}) \xrightarrow{\cdot (x_0 - t)} \Kos_\bullet(\varsnplus; \SRrcomp{\Delta}) \to \Kos_\bullet(t, \varsn; \tSRrcomp{\Delta}) \to 0,
    \] 
    take the long exact sequence in homology. Multiplication by $x_0$ induces zero on $\Homol_i(\varsnplus;V[\Delta])$, by Facts~\ref{facts-koszul}(g). Consequently, the map induced by $x_0-t$ is multiplication by $-t$. Its cokernel is the quotient by $t$, and its kernel is the annihilator of $t$, giving the asserted short exact sequence. Here $\Homol_{-1}=0$ if $i=0$. 
    
    The corresponding short exact sequence for $\Homol^\bullet$ follows immediately from the self-duality in  Facts~\ref{facts-koszul} (a), since both Koszul complexes have $n+1$ generators.
\end{proof} 

\begin{cor}\label{cor-hochsform-tSR-koszulver}
     \LetVDelta. Considering the residue field $K$ as a $V[\varsn]$-module via $K \cong V[\varsn]/(t, \varsn)$, we have isomorphisms of $K$-vector spaces
    \begin{align*}
        \Tor_i^{V[\varsn]}(K, \tSRrcomp{\Delta}) &\cong \frac{\Tor_{i}^{V[\varsnplus]}(V, \SRrcomp{\Delta})}{t \cdot \Tor_{i}^{V[\varsnplus]}(V, \SRrcomp{\Delta})}  \; \oplus\;  \Ann_{\Tor_{i-1}^{V[\varsnplus]}(V, \SRrcomp{\Delta})}t\\
        \\
        \Ext_{V[\varsn]}^j(K, \tSRrcomp{\Delta}) &\cong \frac{\Ext^{j}_{V[\varsnplus]}(V, \SRrcomp{\Delta})}{t \cdot \Ext^{j}_{V[\varsnplus]}(V, \SRrcomp{\Delta})}  \; \oplus\;\Ann_{\Ext^{j+1}_{V[\varsnplus]}(V, \SRrcomp{\Delta})}t .
    \end{align*}  
    Combining this with Hochster's formula (Thm.~\ref{thm-og-hoch-koszulver}), we see that the $K$-vector space dimensions of $\Tor_i^{V[\varsn]}(K, \tSRrcomp{\Delta})$ and $\Ext_{V[\varsn]}^j(K, \tSRrcomp{\Delta})$ can be computed from the simplicial cohomology modules of deletions $\simpHomol^q(\Delta - T; V)$.
    \end{cor}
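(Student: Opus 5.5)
We will deduce the corollary from Theorem~\ref{thm-koszul-ses} after identifying both sides with Koszul (co)homology. First, $t,\varsn$ is a regular sequence on $V[\varsn]$, so by Facts~\ref{facts-koszul}(c) the complex $\Kos_\bullet(t,\varsn;V[\varsn])$ is a free resolution of $K\cong V[\varsn]/(t,\varsn)$. Hence
\[
\Tor_i^{V[\varsn]}(K,\tSRrcomp{\Delta})\cong \Homol_i(t,\varsn;\tSRrcomp{\Delta}),\qquad \Ext^j_{V[\varsn]}(K,\tSRrcomp{\Delta})\cong \Homol^j(t,\varsn;\tSRrcomp{\Delta}).
\]
Similarly, by the Remark following Theorem~\ref{thm-og-hoch-koszulver}, applied with $A=V$, the $\Tor$ and $\Ext$ modules over $V[\varsnplus]$ appearing on the right-hand side are $\Homol_i(\varsnplus;\SRrcomp{\Delta})$ and $\Homol^j(\varsnplus;\SRrcomp{\Delta})$. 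Substituting these identifications into Theorem~\ref{thm-koszul-ses} gives short exact sequences whose terms are exactly the three modules in the statement.

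The remaining point, which is the only real obstacle, is that these sequences split as $K$-vector spaces. Begin with the middle term. By Facts~\ref{facts-koszul}(g), $\Homol_i(t,\varsn;\tSRrcomp{\Delta})$ is killed by $t,\varsn$, so it is a $K$-vector space. The left term is a quotient by $t$, hence also a $K$-vector space. The right term $\Ann t$ is killed by $t$ by definition, and it is killed by $\varsnplus$ by Facts~\ref{facts-koszul}(g) applied to $\Homol_{i-1}(\varsnplus;\SRrcomp{\Delta})$; so it too is a $K$-vector space. The maps in the sequence are $V[\varsnplus]$-linear, and all three modules are annihilated by $(t,\varsnplus)$, so they are maps of $K$-vector spaces. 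Every short exact sequence of vector spaces splits, which gives the stated direct sum decompositions. The same argument applies verbatim to $\Ext$ and the cohomological sequence.

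For the final claim, Theorem~\ref{thm-og-hoch-koszulver} with $A=V$ expresses $\Tor_i^{V[\varsnplus]}(V,\SRrcomp{\Delta})$ and the corresponding $\Ext$ modules as finite direct sums of the finitely generated $V$-modules $\simpHomol^q(\Delta-T;V)$. Both $M/tM$ and $\Ann_M t$ commute with finite direct sums. Moreover, for a finitely generated $V$-module $M\cong V^r\oplus\bigoplus_k V/t^{e_k}$ we have $\dim_K M/tM=r+\#\{k\}$ and $\dim_K\Ann_M t=\#\{k\}$. So the $K$-dimensions are determined by the ranks and the numbers of torsion summands of the $\simpHomol^q(\Delta-T;V)$, which is what the statement asserts.
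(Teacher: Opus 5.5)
Your proposal is correct and is essentially the argument the paper intends. The paper states the corollary without a separate proof, as an immediate consequence of Theorem~\ref{thm-koszul-ses}, once $\Tor$ and $\Ext$ are identified with Koszul (co)homology via the Koszul resolutions. The sequences then split because every term is killed by $t$ (indeed by the whole homogeneous maximal ideal) and so is a $K$-vector space; your dimension count via the structure theorem for finitely generated $V$-modules correctly fills in the final sentence of the statement.
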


\subsection{Betti numbers} 
\label{sec-betti-numbers}

\begin{defn}
    Let $(R, \m)$ be a (graded) local ring with residue field $K = R/\m$, and let $M$ be a finitely generated $R$-module, assumed graded in the graded case. The \emph{$i^{th}$ Betti number} of $M$ is $b_i(M) := \dim_K \Tor^R_i(K, M)$.
\end{defn}

\begin{rmk}
     If $(R,\m, K)$ is regular, then $b_i(M)$ is the rank of the $i^{th}$ term in a minimal free resolution of $M$ (a minimal graded free resolution in the graded case). Without the graded hypothesis on $M$, these numbers instead give the ranks in a minimal free resolution of $M_\m$ over $R_\m$. In this setting, we can compute $b_i(M)$ with Koszul homology: for any system of parameters $\seq{x}$ that generates $\m$, i.e., any regular system of parameters, the Koszul complex $\Kos_{\bullet}(\seq{x}; R)$ is a free resolution of $K$, and thus $b_i(M) = \dim_K \Homol_i(\seq{x};M)$.
    Let $(V,t)$ be a DVR with residue field $K$. The polynomial ring $V[\varsn]$ is regular with homogeneous maximal ideal $(t, \varsn)$. It follows that the Betti numbers of $\tSRrcomp{\Delta}$ are 
    $\dim_K \Homol_i(t, \varsn; \tSRrcomp{\Delta})$. 
    By Corollary \ref{cor-hochsform-tSR-koszulver}, these can be computed from 
    the data $\simpHomol^q(\Delta - T; V)$ for 
    $T \subseteq \zvertset$ and 
    $-1 \leq q \leq n-i+1$, with terms  outside the range of Hochster's formula omitted.
    More explicitly, for $i\geq1$, the two summands in Corollary~\ref{cor-hochsform-tSR-koszulver} give
    \[
    \begin{aligned}
     b_i(\tSRrcomp{\Delta})
     &=\sum_{\substack{-1\leq q\leq n-i\\
                      T\subseteq\zvertset,\ |T|=n-i-q}}
     \dim_K\frac{\simpHomol^q(\Delta-T;V)}
                      {t\simpHomol^q(\Delta-T;V)}\\
     &\quad+\sum_{\substack{-1\leq q\leq n-i+1\\
                      T\subseteq\zvertset,\ |T|=n-i+1-q}}
     \dim_K\Ann_{\simpHomol^q(\Delta-T;V)}t.
    \end{aligned}
    \]
    For $i=0$, only the first sum occurs, since $\Homol_{-1}=0$.
    Thus $n-i+1$ is the upper bound for the union of the two ranges.
    The extra endpoint can contribute: in Example~\ref{ex-betti-P2},
    $n=5$, $i=4$, and in mixed characteristic $2$ the term with $q=2$
    and $T=\nl$ contributes to $b_4$.
\end{rmk}


\begin{examp}\label{ex-betti-P2}
The triangulation $\Delta$ of the real projective plane described next will be used as a running example throughout this paper.

\noindent 
\begin{minipage}[t]{0.64\textwidth}
This triangulation has 6 vertices $v_0, v_1, v_2, v_3, v_4, v_5$, 15 edges (all that are possible), and 10 two-simplices. Consider a regular hexagon with opposite sides identified, so that the boundary becomes a triangulated circle with vertices $v_0,\, v_1,\, v_2$. Edges are identified so that the corresponding vertices match up. Connect interior points  $v_3, \,v_4, \, v_5$ to each other and to $v_0,\, v_1, \, v_2$. The 10 two-simplices, which are the facets,  are these: $\{v_0,\,v_1,\,v_4\}$, $\{v_0,\,v_1,\,v_5\}$, $\{v_0,\,v_2,\,v_3\}$, $\{v_0,\,v_2,\,v_5\}$, $\{v_0,\,v_3,\,v_4\}$, $\{v_1,\,v_2,\,v_3\}$, $\{v_1,\,v_2,\,v_4\}$, $\{v_1,\,v_3,\,v_5\}$, $\{v_2,\,v_4,\,v_5\}$, and $\{v_3,\,v_4,\,v_5\}$.    

\end{minipage}%
\hspace{1.2cm}
\begin{minipage}[t]{0.32\textwidth}
{  \tikzset{every picture/.style={line width=0.75pt}}   
\begin{tikzpicture}[x=0.75pt,y=0.75pt,yscale=-1,xscale=1,baseline={([yshift=-2.0ex]current bounding box.north)}]
\draw  [fill={rgb, 255:red, 155; green, 155; blue, 155 }  ,fill opacity=0.41 ] (74.78,133.42) -- (23.81,105.33) -- (23.81,49.17) -- (74.78,21.08) -- (125.75,49.17) -- (125.75,105.33) -- cycle ;
\draw [color={rgb, 255:red, 189; green, 16; blue, 224 }  ,draw opacity=1 ][line width=1.5]    (23.81,49.17) -- (74.78,21.08) ;
\draw [color={rgb, 255:red, 189; green, 16; blue, 224 }  ,draw opacity=1 ][line width=1.5]    (74.78,133.42) -- (125.75,105.33) ;
\draw [color={rgb, 255:red, 126; green, 211; blue, 33 }  ,draw opacity=1 ][line width=1.5]    (23.81,49.17) -- (23.81,105.33) ;
\draw [color={rgb, 255:red, 126; green, 211; blue, 33 }  ,draw opacity=1 ][line width=1.5]    (125.75,49.17) -- (125.75,105.33) ;
\draw [color={rgb, 255:red, 245; green, 166; blue, 35 }  ,draw opacity=1 ][line width=1.5]    (74.78,21.08) -- (125.75,49.17) ;
\draw [color={rgb, 255:red, 245; green, 166; blue, 35 }  ,draw opacity=1 ][line width=1.5]    (23.81,105.33) -- (74.78,133.42) ;
\draw  [fill={rgb, 255:red, 0; green, 0; blue, 0 }  ,fill opacity=1 ] (50.72,49.08) .. controls (50.72,47.11) and (52.4,45.5) .. (54.48,45.5) .. controls (56.55,45.5) and (58.23,47.11) .. (58.23,49.08) .. controls (58.23,51.06) and (56.55,52.67) .. (54.48,52.67) .. controls (52.4,52.67) and (50.72,51.06) .. (50.72,49.08) -- cycle ;
\draw    (23.81,49.17) -- (125.75,49.17) ;
\draw    (23.81,105.33) -- (125.75,105.33) ;
\draw  [fill={rgb, 255:red, 0; green, 0; blue, 0 }  ,fill opacity=1 ] (91.67,49.08) .. controls (91.67,47.11) and (93.35,45.5) .. (95.42,45.5) .. controls (97.49,45.5) and (99.17,47.11) .. (99.17,49.08) .. controls (99.17,51.06) and (97.49,52.67) .. (95.42,52.67) .. controls (93.35,52.67) and (91.67,51.06) .. (91.67,49.08) -- cycle ;
\draw  [fill={rgb, 255:red, 0; green, 0; blue, 0 }  ,fill opacity=1 ] (71.02,105.33) .. controls (71.02,103.36) and (72.7,101.75) .. (74.78,101.75) .. controls (76.85,101.75) and (78.53,103.36) .. (78.53,105.33) .. controls (78.53,107.31) and (76.85,108.92) .. (74.78,108.92) .. controls (72.7,108.92) and (71.02,107.31) .. (71.02,105.33) -- cycle ;
\draw    (74.78,108.92) -- (74.78,133.42) ;
\draw    (54.48,49.08) -- (23.81,105.33) ;
\draw    (54.48,49.08) -- (74.78,105.33) ;
\draw    (95.42,49.08) -- (74.78,105.33) ; 
\draw    (95.42,49.08) -- (125.75,105.33) ;
\draw    (54.48,49.08) -- (74.78,21.08) ;
\draw    (74.78,21.08) -- (95.42,49.08) ;
\draw  [fill={rgb, 255:red, 208; green, 2; blue, 27 }  ,fill opacity=1 ] (71.02,21.08) .. controls (71.02,19.1) and (72.7,17.5) .. (74.78,17.5) .. controls (76.85,17.5) and (78.53,19.1) .. (78.53,21.08) .. controls (78.53,23.06) and (76.85,24.66) .. (74.78,24.66) .. controls (72.7,24.66) and (71.02,23.06) .. (71.02,21.08) -- cycle ;
\draw  [fill={rgb, 255:red, 208; green, 2; blue, 27 }  ,fill opacity=1 ] (71.02,133.42) .. controls (71.02,131.44) and (72.7,129.84) .. (74.78,129.84) .. controls (76.85,129.84) and (78.53,131.44) .. (78.53,133.42) .. controls (78.53,135.4) and (76.85,137) .. (74.78,137) .. controls (72.7,137) and (71.02,135.4) .. (71.02,133.42) -- cycle ; 
\draw  [fill={rgb, 255:red, 248; green, 231; blue, 28 }  ,fill opacity=1 ] (121.99,49.17) .. controls (121.99,47.19) and (123.67,45.58) .. (125.75,45.58) .. controls (127.82,45.58) and (129.5,47.19) .. (129.5,49.17) .. controls (129.5,51.14) and (127.82,52.75) .. (125.75,52.75) .. controls (123.67,52.75) and (121.99,51.14) .. (121.99,49.17) -- cycle ;
\draw  [fill={rgb, 255:red, 248; green, 231; blue, 28 }  ,fill opacity=1 ] (20.06,105.33) .. controls (20.06,103.36) and (21.74,101.75) .. (23.81,101.75) .. controls (25.88,101.75) and (27.56,103.36) .. (27.56,105.33) .. controls (27.56,107.31) and (25.88,108.92) .. (23.81,108.92) .. controls (21.74,108.92) and (20.06,107.31) .. (20.06,105.33) -- cycle ;
\draw  [fill={rgb, 255:red, 74; green, 144; blue, 226 }  ,fill opacity=1 ] (121.99,105.33) .. controls (121.99,103.36) and (123.67,101.75) .. (125.75,101.75) .. controls (127.82,101.75) and (129.5,103.36) .. (129.5,105.33) .. controls (129.5,107.31) and (127.82,108.92) .. (125.75,108.92) .. controls (123.67,108.92) and (121.99,107.31) .. (121.99,105.33) -- cycle ;
\draw  [fill={rgb, 255:red, 74; green, 144; blue, 226 }  ,fill opacity=1 ] (20.06,49.17) .. controls (20.06,47.19) and (21.74,45.58) .. (23.81,45.58) .. controls (25.88,45.58) and (27.56,47.19) .. (27.56,49.17) .. controls (27.56,51.14) and (25.88,52.75) .. (23.81,52.75) .. controls (21.74,52.75) and (20.06,51.14) .. (20.06,49.17) -- cycle ;

\draw (66,6) node [anchor=north west][inner sep=0.75pt]    {$v_{0}$};
\draw (45,61) node [anchor=north west][inner sep=0.75pt]    {$v_{3}$};
\draw (88.67,61.08) node [anchor=north west][inner sep=0.75pt]    {$v_{4}$};
\draw (78,107) node [anchor=north west][inner sep=0.75pt]    {$v_{5}$};
\draw (130,45) node [anchor=north west][inner sep=0.75pt]    {$v_{1}$};
\draw (130,100) node [anchor=north west][inner sep=0.75pt]    {$v_{2}$};
\draw (68,138) node [anchor=north west][inner sep=0.75pt]    {$v_{0}$};
\draw (3,100) node [anchor=north west][inner sep=0.75pt]    {$v_{1}$};
\draw (2,45) node [anchor=north west][inner sep=0.75pt]    {$v_{2}$};
\end{tikzpicture}}
\end{minipage}

The faces and non-faces of the complex are highly symmetric. For $0 \leq i \leq 5$,  let $i' = 5-i$. Then we may describe the 10 two-simplices that are in $\Delta$ (\resp, not in $\Delta$) as $\spx {i'} {j'} {k'}$, $\spx ij {i'}$, and $\spx {i'} {j'} k$ for all choices of $i, j, k$ such that $\{i,\,j,\, k\} = \{0,\,1,\,2\}$ (\resp, $\{3,\,4,\,5\}$).  Note that $\Delta$ has simplicial automorphisms that arbitrarily permute the three pairs $(i,\,i')$ for $i \in \{0,\,1,\,2\}$. Moreover, $\Delta$ is isomorphic to its Alexander dual $\Delta^\vee$. The defining ideal of the $t$-Stanley-Reisner ring $V\ov{[\Delta]}$ is generated by the 10 square-free $t$-monomials corresponding to the 3 element subsets of the vertices not in $\Delta$.

We wish to compute the Betti numbers of the $t$-Stanley-Reisner ring $\tSRrcomp{\Delta}$. 
When $\Delta-T$ has a vertex, the simplicial cohomology modules $\simpHomol^q(\Delta-T;V)$ are isomorphic as $V$-modules to the reduced singular cohomology modules of the underlying topological space, which are invariant under homotopy equivalence. For $T=\zvertset$, the augmented simplicial complex $\Delta-T=\{\nl\}$ instead gives $\simpHomol^{-1}(\Delta-T;V)=V$ and zero cohomology in every other degree. Note also that $\simpHomol^q(\Delta- T;V)$ vanishes for $q \geq 3$, since $\dim \Delta = 2$. We give the data for the cohomology of 
$\Delta - T$ in the following table:

\begin{center}
\renewcommand{\arraystretch}{1.3}
    \begin{tabular}{|c|c|c|c|c|c|c|}
\hline
{\boldmath $|T|$} & {\boldmath $\Delta - T$} & \textbf{\boldmath \# Choices of $T$} & \rbx{-2.0pt}{{\boldmath $ \simpHomol^{-1}$}} &\rbx{-2.0pt}{{\boldmath $\simpHomol^{0}$}}  &\rbx{-2.0pt}{{\boldmath $\simpHomol^{1}$}}  &\rbx{-2.0pt}{{\boldmath $\simpHomol^{2}$}}  \\ \hline
6 & $\{\nl\}$         & 1  &  $V$  & $0$ & $0$ & $0$ \\ \hline
5 & $\bullet$           & 6  &  $0$  & $0$ & $0$ & $0$ \\ \hline
\rbx{2.0pt}{4} & \rbx{-3.5pt}{\includegraphics[scale=0.2]{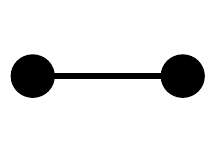}
\rbx{5.0pt}{$\simeq\bullet$}}   & 15 &  $0$  & $0$ & $0$ & $0$ \\ \hline
\rbx{-6.0pt}{3} & \Strut{25pt} \includegraphics[scale=0.2]{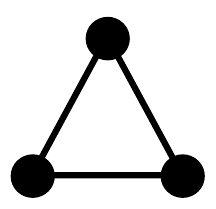}
  \rbx{6.0pt}{$\cong \; S^1$ (circle)}       &\rbx{6.5pt}{10} &\rbx{6.5pt}{$0$}  &\rbx{6.5pt}{$0$} & \rbx{6.5pt}{$V$} & \rbx{6.5pt}{$0$} \\
  &\kern -35pt {\includegraphics[scale=0.2]{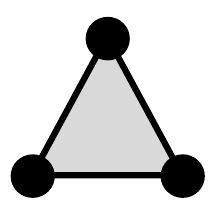}}
     \rbx{6.5pt}{$\simeq \,\,\,\bullet$}                              &\rbx{6.5pt}{10} & 
     \rbx{6.5pt}{$0$}  &\rbx{6.5pt}{$0$} & \rbx{6.5pt}{$0$} & \rbx{6.5pt}{$0$} \\ \hline
\rbx{8.5pt}{2} & \Strut{30pt}\includegraphics[scale=0.2]{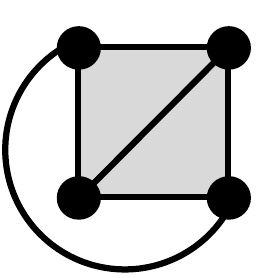}
 \rbx{9.0pt}{$\simeq \; S^1$ (circle)}                      & \rbx{8.5pt}{15} & \rbx{8.5pt}{$0$}  & 
     \rbx{8.5pt}{$0$} & \rbx{8.5pt}{$V$} & \rbx{8.5pt}{$0$} \\ \hline
\rbx{6.0pt}{1} & \Strut{27pt}\includegraphics[scale=0.2]{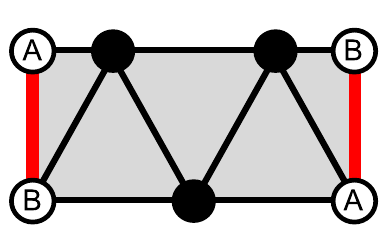}
    \rbx{6.0pt}{M\"obius strip $\simeq S^1$}  &\rbx{7.0pt}{6} & \rbx{7.0pt}{$0$} & \rbx{7.0pt}{$0$} & \rbx{7.0pt}{$V$} & \rbx{7.0pt}{$0$} \\ \hline
0 & $|\Delta| \cong \mathbb{P}^2_{\R}$             & 1 &  $0$  & $0$ & $\Ann_V 2$ & $V/2V$ \\ \hline
\end{tabular}
\end{center}
We now use Hochster's formula to compute the Koszul homology modules $\Homol_i(\varsnplus; \SRrcomp{\Delta})$ of the Stanley-Reisner ring $\SRrcomp{\Delta}$.
{\allowdisplaybreaks
\begin{align*}
    \Homol_0(\varsnplus; \SRrcomp{\Delta}) &= V\\
    \Homol_1(\varsnplus; \SRrcomp{\Delta}) &= V^{\oplus 10}\\
    \Homol_2(\varsnplus; \SRrcomp{\Delta}) &= V^{\oplus 15}\\
    \Homol_3(\varsnplus; \SRrcomp{\Delta}) &= \frac{V}{2V} \oplus V^{\oplus 6}\\
    \Homol_4(\varsnplus; \SRrcomp{\Delta}) &= \Ann_V 2\\
    \Homol_q(\varsnplus; \SRrcomp{\Delta}) &= 0 & \text{ for all } q \geq 5.
\end{align*}
Note that $V/2V$ vanishes unless $\charac K = 2$, and $\Ann_V 2$ vanishes unless $V$ itself has equal characteristic $2$, in which case $\Ann_V 2 = V$. We have
\begin{align*}
    \frac{V/2V}{t\cdot (V/2V)} &\cong \frac{V}{(2,t)V} = \begin{cases}
        K &\text{if } 2 \in tV,\\
        0 &\text{if } 2 \in V^\times.
    \end{cases}\\
    \\
    \Ann_{V/2V} t &= \frac{2V :_V t}{2V} =  \begin{cases}
        K &\text{if } \sqrt{2V} = tV, \text{ i.e. if $2 \in tV$ and $2 \neq 0$,}\\
        0 &\text{if $2 \in V^\times$ or if $\mathbb{F}_2 \subset V.$}   
    \end{cases}\\
    \\
    \frac{\Ann_V 2}{t \cdot \Ann_V 2} &= \begin{cases}
        K &\text{if } \mathbb{F}_2 \subset V,\\
        0 &\text{else}.
    \end{cases}\\
    \\
    \Ann_{\Ann_V 2} t &= 0.    
\end{align*}

Finally, we calculate the Betti numbers of the $t$-Stanley-Reisner ring $\tSRrcomp{\Delta}$. The zeroth Betti number can be computed directly; for the rest, we use Corollary \ref{cor-hochsform-tSR-koszulver}. 

\begin{align*}
    b_0(\tSRrcomp{\Delta}) &= \dim_K \Homol_0(t, \varsn; \tSRrcomp{\Delta}) \\
         &= \dim_K K \\
         &= 1.\\
    \\
    b_1(\tSRrcomp{\Delta}) &= \dim_K \frac{\Homol_1(\varsnplus; \SRrcomp{\Delta})}{t \cdot \Homol_1(\varsnplus; \SRrcomp{\Delta})} +
        \dim_K \Ann_{\Homol_0(\varsnplus; \SRrcomp{\Delta})}t\\
        &= \dim_K \frac{V^{\oplus 10}}{t \cdot V^{\oplus 10}} + \dim_K \Ann_Vt\\
        &= \dim_K K^{\oplus 10}\\
        &= 10.\\
    \\
    b_2(\tSRrcomp{\Delta}) &=\dim_K \frac{\Homol_2(\varsnplus; \SRrcomp{\Delta})}{t \cdot \Homol_2(\varsnplus; \SRrcomp{\Delta})}  +  \dim_K \Ann_{\Homol_1(\varsnplus; \SRrcomp{\Delta})}\pur{t}\\
        &=\dim_K \frac{V^{\oplus 15}}{t \cdot V^{\oplus 15}} + \dim_K \Ann_{V^{\oplus 10}}t \\
        &= \dim_K K^{\oplus 15}\\
        &= 15.
    \\
    b_3(\tSRrcomp{\Delta}) &= \dim_K \frac{\Homol_3(\varsnplus; \SRrcomp{\Delta})}{t \cdot \Homol_3(\varsnplus; \SRrcomp{\Delta})} +\dim_K \Ann_{\Homol_2(\varsnplus; \SRrcomp{\Delta})} t\\
        &=  \dim_K \frac{\frac{V}{2V} \oplus V^{\oplus 6}}{t \cdot (\frac{V}{2V} \oplus V^{\oplus 6})} + \dim_K \Ann_{V^{\oplus 15}}t \\
        &= \begin{cases}
            6 &\text{ if $2 \in V^{\times}$},\\
            7 &\text{if }2 \in tV.
        \end{cases}
    \\
    b_4(\tSRrcomp{\Delta}) &= \dim_K \frac{\Homol_4(\varsnplus; \SRrcomp{\Delta})}{t \cdot \Homol_4(\varsnplus; \SRrcomp{\Delta})} + \dim_K \Ann_{\Homol_3(\varsnplus; \SRrcomp{\Delta})}t\\
        &=\dim_K \Ann_{\frac{V}{2V} \oplus V^{\oplus 6}}t +\dim_K \Ann_V 2 \otimes_V K\\
        &= \begin{cases}
            0 &\text{ if $2 \in V^{\times}$},\\
            1 &\text{if } 2 \in tV.
        \end{cases}\\
    b_5(\tSRrcomp{\Delta}) &= \dim_K \Ann_{\Ann_V 2} t = 0.
\end{align*}
}
\end{examp}

\section{Local cohomology}
\label{sec-local-cohomology} 

  We will now adapt Hochster's formula for local cohomology to the setting of $t$-Stanley-Reisner rings. In the previous section, we reduced the problem for $t$-Stanley-Reisner rings to an application of the original theorem. We now employ a different strategy: the main theorem of this section 
  (Thm.~\ref{Main4}) will be proved analogously to the proof of the original Hochster's formula. This proof strategy highlights the comparisons and contrasts between the classical setting and the new mixed characteristic setting. 
  It turns out that \textit{cellular sheaf cohomology} (see \S\ref{subsec-cellular-sheaves}) is an essential tool for our primary treatment, although we do give an alternative treatment which avoids the use of sheaves. We also discuss the special case where $\Delta$ is a homological manifold with boundary (see \S \ref{homby}), which generalizes our running example of the real projective plane.

\subsection{Background: Hochster's formula for local cohomology}
We first give a brief treatment of a classical 
formula that expresses the components of the local cohomology of a \SR ring in terms of reduced simplicial cohomology. This is followed by a result that
gives similar information about the local cohomology of an ideal generated by square-free monomials in  a \SR ring in terms of reduced relative simplicial cohomology. This result is not difficult to deduce from the original formula.

\subsubsection{Local cohomology for \SR rings}\label{orig} 

\begin{defn}
    Let $R$ be a commutative ring, and let $\seq{f} = f_1, \hdots, f_n \in R$ be minimal generators for an ideal $I \subset R$, up to radicals. The \emph{local cohomology modules} $\Homol_{I}^\bullet(R)$ are the cohomology of the modified \Cech complex
    $0 \to R \to \bigoplus R_{f_i} \to \hdots \to R_{f_1\hdots f_n} \to 0$
     where the transition maps are induced by signed localization maps $(-1)^j: R_{f_{i_0}\hdots \widehat{f_{i_j}} \hdots f_{i_q}} \to R_{f_{i_0}\hdots f_{i_q}}$ for every $i_0 < \hdots < i_q$ and $0 \leq j \leq q$.
\end{defn}

\begin{rmk} \label{rmk-cech-SR}
    For a Stanley-Reisner ring $R = A[\Delta]$ and the ideal $(\varsn)$ generated by the variables, the terms $R_{x_{i_0}\hdots x_{i_q}}$ of the \Cech complex vanish except when $\vertlist{i_0}{i_q}$ is a face of $\Delta$. Moreover, the signs of the maps $R_{\monomF{F}} \to R_{\monomF{G}}$ in the \Cech complex are given by $\bndsgn{\oriented{F}}{\oriented{G}}$ (see Definition \ref{def-simpchain}).
\end{rmk}

We now state Hochster's formula\footnote{The formula in Thm.~\ref{thm-og-hochsform-localcohomver} 
 was communicated by Hochster to Stanley in a letter in the early 1970s, and published, with permission, by Stanley in \cite[Thm.~4.1, p.~60]{Sta96}.}
for local cohomology and briefly describe the main ideas of the proof, which we will generalize in \S 
\ref{subsec-loc-cohom-tSR}:

\begin{thm}[{\cite[Thm 5.3.8]{brunsherz}}]\label{thm-og-hochsform-localcohomver}
  Let $K$ be a field, and let $\Delta$ be a simplicial complex on $\vertset$. Let $\m$ be the homogeneous maximal ideal of the Stanley-Reisner ring $K[\Delta]$. The multigraded Hilbert series of the local cohomology module $\Homol^i_{\m}(K[\Delta])$ is
  \[
     \mathrm{HS}_{\Homol^i_{\m}(K[\Delta])}(\Zn{s}) = \sum_{F \in \Delta} \dim_K \simpHomol_{i - |F| - 1}(\link_\Delta F; K) \prod_{\vertex{j} \in F}\frac{s_j^{-1}}{1- s_j^{-1}}.
  \]
  More generally, let $A$ be any commutative ring and let $\m$ be the ideal generated by the variables in the Stanley-Reisner ring $A[\Delta]$. For $\Zn{a} \in \Z^n$, define simplices 
  $H_{\Zn{a}} := \{ \vertex{i} : a_i > 0 \}$ and $G_{\Zn{a}} := \{\vertex{i} : a_i < 0\}.$ Then:
    
  \[
  [\Homol^i_{\m}(A[\Delta])]_{\Zn{a}} \cong \begin{cases}
         \simpHomol^{i-|G_{\Zn{a}}|-1}(\link_\Delta G_{\Zn{a}} ; A) &\text{if } H_{\Zn{a}} = \nl,\\
         0 &\text{else.}
     \end{cases}
  \]
\end{thm}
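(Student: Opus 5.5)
We compute $\Homol^i_\m(A[\Delta])$ with the \Cech complex $\Cv^\bullet$ on $x_1,\ldots,x_n$ and work one multidegree $\ua\in\Z^n$ at a time. Since $\Cv^\bullet$ is $\Z^n$-graded with homogeneous differentials, $[\Homol^i_\m(A[\Delta])]_\ua$ is the $i$th cohomology of the complex of $A$-modules $[\Cv^\bullet]_\ua$.

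By Remark~\ref{rmk-cech-SR}, the terms of $\Cv^q$ are the localizations $A[\Delta]_{\monomF{F}}$ for faces $F\in\Delta$ with $|F|=q$. The first step is to describe their graded pieces. By Proposition~\ref{prop-ADeltabasics}(a), $A[\Delta]$ is $A$-free on the monomials with support in $\Delta$. After inverting $\monomF{F}$, the ring $A[\Delta]_{\monomF{F}}$ is $A$-free on the Laurent monomials $\ux^{\und b}$ with $b_i\in\Z$ for $v_i\in F$, $b_i\ge 0$ otherwise, and $F\cup\supp^+(\und b)\in\Delta$. Hence $[A[\Delta]_{\monomF{F}}]_\ua\cong A$ exactly when two conditions hold: $G_\ua\subseteq F$, and $F\cup H_\ua\in\Delta$. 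Otherwise it is $0$. The localization maps between nonzero pieces are the identity on $A$, with the signs $\bndsgn{\oriented{F}}{\oriented{G}}$ from Remark~\ref{rmk-cech-SR}.

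Next we split into two cases.

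\textbf{Case $H_\ua\neq\nl$.} Suppose $v_j\in H_\ua$. Then the nonzero terms are indexed by the faces $F$ with $G_\ua\subseteq F$ and $F\cup H_\ua\in\Delta$. This index set is closed under adding or removing $v_j$. So $[\Cv^\bullet]_\ua$ is the mapping cone of an isomorphism, namely the map $F\mapsto F\cup\{v_j\}$ on the faces not containing $v_j$. Equivalently, it is a Koszul-type complex on the unit $1$, as in Remark~\ref{Koszul-simp}. It is therefore exact, and the graded piece of local cohomology vanishes.

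\textbf{Case $H_\ua=\nl$.} The nonzero terms are indexed by the faces $F\supseteq G=G_\ua$, that is, by $F=G\sqcup E$ with $E\in\link_\Delta G$. Send the generator of degree $|F|=|G|+|E|$ to the cochain dual to $\oriented{E}$, which lives in simplicial degree $|E|-1$. This identifies $[\Cv^\bullet]_\ua$ with $\simpchain^{\bullet-|G|-1}(\link_\Delta G;A)$. If $G\notin\Delta$, both sides are zero.

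The step needing care is the sign check. We need $\bndsgn{\oriented{G\cup E}}{\oriented{G\cup E\cup v}}=\pm\bndsgn{\oriented{E}}{\oriented{E\cup v}}$, with the sign $\pm$ depending only on $G$ and the degree. This is the exterior-algebra identity $u_G\wedge u_E$ from Remark~\ref{Koszul-simp}: the sign is $(-1)^{|G|}$, which is uniform. It can be absorbed by rescaling the basis elements, so cohomology is unchanged.

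Taking cohomology in degree $i$ then gives $\simpHomol^{i-|G_\ua|-1}(\link_\Delta G_\ua;A)$, which is the claimed formula.

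The Hilbert series statement over a field $K$ follows from the multidegree formula. For a fixed face $G$, the multidegrees $\ua$ with $G_\ua=G$ and $H_\ua=\nl$ are counted by $\prod_{v_j\in G}s_j^{-1}/(1-s_j^{-1})$. We also use that over a field $\dim_K\simpHomol^q=\dim_K\simpHomol_q$.
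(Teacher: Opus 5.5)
Your proposal is correct and follows essentially the paper's argument: the Bruns--Herzog description of the $\ua$-components of $\Cv^\bullet$, vanishing for $H_{\ua}\neq\nl$ via a cone vertex $v_j\in H_{\ua}$ (your mapping-cone-of-an-isomorphism observation is the algebraic proof that a cone is acyclic), and the identification with $\simpchain^{\bullet-|G_{\ua}|-1}(\link_\Delta G_{\ua};A)$ when $H_{\ua}=\nl$. One correction to the sign check: with increasing-order orientations, $\bndsgn{\oriented{G\cup E}}{\oriented{G\cup E\cup v}}$ and $\bndsgn{\oriented{E}}{\oriented{E\cup v}}$ differ by $(-1)^{\#\{w\in G:\, w<v\}}$, which depends on $v$, so you must first reorient each face $G\sqcup E$ as $u_G\wedge u_E$ (after which the discrepancy is the uniform sign $(-1)^{|G|}$), or list the vertices of $G$ last as the paper does in Proposition~\ref{prop-cech-vs-sheaf}, which removes the discrepancy entirely.
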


\begin{proof}[Idea of proof:]
    Let $\check C^\bullet$ be the \Cech complex for $R := A[\Delta]$ with respect to $\m = (\varsn)$. 
  For any face $F \in \Delta$, the component $[R_{\monomF{F}}]_{\Zn{a}}$ is isomorphic to $A$ if and only if $G_{\Zn{a}} \subseteq F$ and $F \cup H_{\Zn{a}} \in \Delta$. Otherwise, $[R_{\monomF{F}}]_{\Zn{a}}$ vanishes \cite[Lemma 5.3.6]{brunsherz}. Using this fact, one constructs an isomorphism of cochain complexes
  \[
    [\check C^\bullet]_{\Zn{a}} \cong \simpchain^\bullet(\link_{\st_\Delta H_{\Zn{a}}} G_{\Zn{a}}; A)[-|G_{\Zn{a}}|-1] \qquad \qquad \text{\cite[Lemma 5.3.7]{brunsherz}}
  \]
 The final key observation is that when $H_{\Zn{a}}\neq\nl$, the complex $\link_{\st_\Delta H_{\Zn{a}}} G_{\Zn{a}}$ is either $\Nl$ or a cone, with any vertex of $H_{\Zn{a}}$ as cone vertex. Its reduced cohomology therefore vanishes.  It follows that
  \[ 
     [\Homol^i_{\m}(R)]_{\Zn{a}} \cong \begin{cases}
         \simpHomol^{i-|G_{\Zn{a}}|-1}(\link_\Delta G_{\Zn{a}} ; A) &\text{if } H_{\Zn{a}} = \nl,\\
         0 &\text{else.}
     \end{cases}
  \]
If $A$ is a field, simplicial cohomology is the $A$-linear dual of simplicial homology in the same degree; in particular, their dimensions agree.
\end{proof}

\subsubsection{Local cohomology of a square-free monomial ideal in a \SR ring}\label{cohomJ} 

In this subsection, we consider an ideal $J$ of a \SR ring that is generated by square-free monomials in the ring and develop a formula for the local cohomology of $J$ with support in the ideal generated by the variables.   
We shall apply these results to a method for studying the local cohomology of \tSR rings later, in \S\ref{altcohom}. When we do that, $J$ will correspond to the annihilator of $t$ in a \tSR ring. The result we obtain here can be construed as 
a generalization of Theorem~\ref{thm-og-hochsform-localcohomver}.

\begin{notationdiscussion} Throughout this subsection, $\Delta$ will denote a finite abstract simplicial complex with vertex set $\vect v n$.  We let $A$ denote an arbitrary nonzero commutative ring, and $\ux: =\vect xn$ be  indeterminates over $A$ corresponding to the vertices $\vect v n$. We will consider an ideal $J \subset A[\Delta]$ generated by square-free monomials whose supports are in $\Delta$. The nonzero square-free monomials in $A[\Delta]$ are those with support in a face of $\Delta$, and since $J$ is an ideal we have that if $\monomF{F} \in J$ and $F \subseteq G$, then $\monomF{G} \in J$ also. Hence, the set of faces $F$ such that $\monomF{F}$ is \textit{not} in $J$ forms a simplicial subcomplex of $\Delta$. We denote this subcomplex as $\La_J$, but we frequently omit the subscript.  In fact, there is a bijection between ideals of $A[\Delta]$ generated by square-free monomials and subcomplexes $\La \inc \Delta$.  Given $\La\inc \Delta$, we simply let $J:=J_{\La}$ be spanned over $A$ in $A[\Delta]$ by all monomials whose support is not in $\La$. The subscripts on $J_{\La}$ and $\La_J$ will typically be omitted: unless otherwise specified, $J$ and $\La$ will correspond, each to the other.

\end{notationdiscussion}

\begin{fact}We have an obvious short exact sequence
$(\natural) \quad 0 \to J \to A[\Delta] \arwf{\pi} A[\Lambda] \to 0$,
where $\pi$ is the unique $A$-algebra homomorphism that kills $J$ and maps the image of every monomial in $\ux$ with support in $\Lambda$ that is in $A[\Delta]$ to the image of the same monomial in $A[\La]$. 
\end{fact}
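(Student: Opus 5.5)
The statement is the "obvious" short exact sequence $(\natural)$, and the plan is to verify it directly from the monomial $A$-bases. By Proposition~\ref{prop-ADeltabasics}(a), $A[\Delta]$ is a free $A$-module with basis the monomials $\monompos{m}$ with $\supp\monompos{m}\in\Delta$, and likewise $A[\La]$ is free with basis the monomials whose support lies in $\La$.

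First I will check that $\pi$ is well defined. Since $\La\subseteq\Delta$, every non-face of $\Delta$ is a non-face of $\La$, so $\SRi{\Delta}\subseteq\SRi{\La}$ in $A[\ux]$. Hence the identity of $A[\ux]$ induces a surjective $A$-algebra map $\pi: A[\Delta]\to A[\La]=A[\ux]/\SRi{\La}$. On basis monomials, $\pi$ sends $\monompos{m}$ to the same monomial when $\supp\monompos{m}\in\La$, and to $0$ otherwise. In the latter case $\supp\monompos{m}\notin\La$, so some face of $\supp\monompos{m}$ (indeed the whole support) is a non-face of $\La$ and its square-free monomial divides $\monompos{m}$. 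This is the map described in the statement, and uniqueness holds because the $x_i$ generate the algebra.

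Next I will identify $\Ker\pi$ with $J$, which is the only real step. Because the $A$-bases are compatible, $\Ker\pi$ is the free $A$-submodule spanned by the monomials $\monompos{m}$ with $\supp\monompos{m}\in\Delta\sm\La$. By the Notation and Discussion, $J=J_\La$ is spanned over $A$ by the monomials whose support is not in $\La$. So it remains to confirm the correspondence $J\leftrightarrow\La$: that the ideal generated by the square-free monomials $\monomF{F}$ with $F\in\Delta\sm\La$ has exactly this $A$-span.

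For one inclusion, any monomial $\monompos{m}$ with $\supp\monompos{m}=G\notin\La$ equals $\monomF{G}$ times a monomial, so it lies in the ideal. For the other, multiplying $\monomF{F}$ with $F\notin\La$ by a monomial either gives $0$ in $A[\Delta]$ or gives a monomial whose support contains $F$. Since $\La$ is closed under subsets, that support is not in $\La$. Therefore the $A$-span is an ideal, equal to $J$, and $\Ker\pi=J$.

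I expect no genuine obstacle. The only care needed is distinguishing "support in $\La$" from "support in $\Delta$", and using that $\La$ is a simplicial subcomplex. The same bookkeeping shows that $(\natural)$ is $\Z^n$-graded, componentwise split over $A$, and that each graded piece is $0$, $A\to A$, or $A\to 0$.
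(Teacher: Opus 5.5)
Your verification is correct and is the routine monomial-basis check the paper has in mind: the paper gives no proof, calling $(\natural)$ obvious on the strength of the preceding Notation and Discussion, where $J=J_{\La}$ is described as the $A$-span of the monomials whose support is not in $\La$, and this is exactly the identification $\Ker\pi=J$ you carry out. One small point: Proposition~\ref{prop-ADeltabasics}(a) is stated for nonempty complexes over Noetherian rings, whereas here $A$ need not be Noetherian and $\La$ may be void, but the monomial-basis description you use holds verbatim in that generality.
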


\begin{notation} Given any algebra or module $M$ over
$A[\ux]$, and this includes all the \SR rings over
simplices with vertices in $\{\vect v n\}$ and their
ideals, we let $\Cv^{\bullet}(M)$ denote the modified
\Cech complex of $M$ with respect to $\ux$, whose
cohomology gives the local cohomology modules $H^{\bullet}_{(\ux)}(M)$. We immediately have:
\end{notation}

\begin{fact}\label{Cechcx}  There is an induced short exact sequence of modified \Cech complexes
\[(\dagger)\quad 0 \to \Cv^{\bu}(J) \to \Cv^{\bu}(A[\Delta]) \to \Cv^{\bu}(A[\Lambda]) \to 0. \]
\end{fact}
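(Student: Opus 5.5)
The statement to prove is Fact~\ref{Cechcx}: the short exact sequence $(\natural)$ induces a short exact sequence $(\dagger)$ of modified \Cech complexes with respect to $\ux$.

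The plan is to apply the modified \Cech complex functor termwise to $(\natural)$ and use exactness of localization. For each subset $S=\{i_0<\cdots<i_q\}\subseteq\{1,\ldots,n\}$, write $x_S=x_{i_0}\cdots x_{i_q}$. The degree-$q+1$ term of $\Cv^\bu(M)$ is $\bigoplus_{|S|=q+1} M_{x_S}$, where $M_{x_S}\cong M\otimes_{A[\ux]}A[\ux]_{x_S}$. Localization at a multiplicative set is an exact functor, and a finite direct sum of exact sequences is exact. Hence in each cohomological degree the sequence
\[0\to \bigoplus_{|S|=q+1}J_{x_S}\to\bigoplus_{|S|=q+1}A[\Delta]_{x_S}\to\bigoplus_{|S|=q+1}A[\La]_{x_S}\to 0\]
is exact. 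In degree $0$ it is just $(\natural)$ itself.

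Next I check that these termwise maps form maps of complexes. The \Cech differentials are signed sums of the natural localization maps $M_{x_{S\sm\{i_j\}}}\to M_{x_S}$, with sign $(-1)^j$. These are natural in $M$: for any $A[\ux]$-linear map $f\colon M\to N$, the induced maps $f_{x_{S'}}$ and $f_{x_S}$ commute with localization. The same signs appear in all three complexes, so the inclusion $J\inj A[\Delta]$ and the surjection $\pi$ induce cochain maps. Thus $(\dagger)$ is a short exact sequence of complexes.

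It is worth noting that $J$, $A[\Delta]$ and $A[\La]$ are all $A[\ux]$-modules, because $J$ is an ideal of $A[\Delta]$ and $\pi$ is $A[\ux]$-linear. This is what makes $\Cv^\bu$ well-defined on each of them and functorial. Since everything is $\Z^n$-graded and the maps are homogeneous, $(\dagger)$ is also exact in each multidegree $\ua$; this will be useful later.

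There is essentially no obstacle here. The only point needing care is naturality of the signed localization maps, and that is immediate. As a consequence one obtains the long exact sequence $\cdots\to H^i_{(\ux)}(J)\to H^i_{(\ux)}(A[\Delta])\to H^i_{(\ux)}(A[\La])\to H^{i+1}_{(\ux)}(J)\to\cdots$, graded by multidegree.
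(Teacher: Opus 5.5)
Your proposal is correct and is essentially the argument the paper has in mind: the paper states this Fact with no written proof beyond ``we immediately have,'' relying on exactness of localization applied termwise to $(\natural)$ and naturality of the signed localization maps. You have simply written out those details, including the observation that the sequence is compatible with the $\Z^n$-grading, which the paper uses in the following discussion.
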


\begin{discussion}\label{relcohomJ}
Each of these modified \Cech complexes is $\Z^n$-graded as usual, where the grading comes 
from the grading on 
$A[\vect x n,\, x_1^{-1}, \, \ldots, \, x_n^{-1}]$ in which  for $\Zn{a} := (\vect a n) \in \Z^n$, the degree $\Zn{a}$ component 
is $Ax_1^{a_1}\cdots x_n^{a_n}$.  

The maps of \Cech complexes  preserve this grading, and so the exact sequence $(\dagger)$ is the direct sum of a family, indexed by  $\Zn{a} \in \Z^n$, of short exact sequences of much smaller complexes. We want to describe these $\Zn{a}$ components and, for every $\Zn{a} \in \Z^n$,  to understand the long exact sequence for cohomology that it provides.  
In this process, the component complexes turn
out to be finite complexes of finite rank free $A$-modules and, as we shall soon see, in every instance the  long exact sequence we obtain from the $\Zn{a}$ component is a standard one in algebraic topology. 

The components of $\Cv^{\bullet}(A[\Delta])$ and 
$\Cv^{\bullet}(A[\La])$ have already been analyzed in the preceding subsection in 
Theorem~\ref{thm-og-hochsform-localcohomver} and the following discussion titled \textit{``Idea of proof."} 
At the cochain level, the  maps for the complexes coming from the links of 
$G_{\Zn{a}}$ in $\st_\Delta H_\ua$ and $\st_{\La}H_{\ua}$ are 
the same as those used in defining relative simplicial cohomology of the pair.  
When $H_{\ua} = \nl$, these are the same as $\Delta$ and $\La$, \resp. 
The cited treatments of relative Stanley-Reisner theory use a coefficient field. For the local cohomology formula here, the cochain construction works over an arbitrary commutative ring; 
no identification of homology with cohomology is needed.  
See \cite[\S3.1, pp.\,199--202]{Hat02}, 
\cite[\S\S43--45]{Mun84a}, and \cite[\S\S3,4]{Spa95}.  

The relative simplicial cohomology of the pair can be viewed alternatively as the 
cohomology of the relative simplicial complex $\Delta/\La$, in the terminology of \cite[p.\,205]{Sta87} or \cite[Ch.\,3,\,\S7,\,p.\,116]{Sta96}, which is the set $\Delta \sm \La$.
There is also an exposition of relative Stanley-Reisner theory in 
\cite[\S1,\,\,p.\,103]{AdSa16}. 
\end{discussion}

Throughout the rest of this paper, if $\ua := (\vect an), \und{b} := (\vect bn)$, we 
write $\ua \leq \und{b}$ to mean that $a_i \leq b_i$ for $1 \leq i \leq n$.

From the discussion above:

\begin{thm}\label{cohJ} With notation as above, if $\ua : = (\vect an) \in \Z^n$ then 
$[H^j_{\ux}(J)]_{\ua}$ vanishes unless
$\ua \leq \Zo$. If $\ua \leq \Zo$, let $G_{\ua}$ denote the 
set of $v_i$ such that $a_i <0$. Then
\[ [H^j_{(\ux)}(J)]_{\ua} \cong \tH^{j-|G_{\ua}|-1}(\link_{\Delta}G_{\ua},\,
\link_{\La} G_{\ua}; \, A),\]
the reduced relative simplicial cohomology of the pair
$(\link_{\Delta}G_{\ua}, \, \link_{\La}G_{\ua})$ with coefficients in $A$.
\end{thm}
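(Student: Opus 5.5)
The plan is to work one multidegree $\ua\in\Z^n$ at a time in the short exact sequence $(\dagger)$ of Fact~\ref{Cechcx}. Since all maps there are $\Z^n$-graded, $(\dagger)$ splits as a direct sum over $\ua$ of short exact sequences of complexes
\[
0\to[\Cv^\bu(J)]_\ua\to[\Cv^\bu(A[\Delta])]_\ua\to[\Cv^\bu(A[\La])]_\ua\to 0 .
\]
Then $[H^j_{(\ux)}(J)]_\ua$ is the cohomology of the kernel complex $[\Cv^\bu(J)]_\ua$.

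\textbf{Step 1: identify the two right-hand complexes.} By the \emph{Idea of proof} of Theorem~\ref{thm-og-hochsform-localcohomver} (via \cite[Lemma~5.3.7]{brunsherz}), applied once to $\Delta$ and once to $\La$, we have isomorphisms
\[
[\Cv^\bu(A[\Delta])]_\ua\cong\simpchain^\bu(\link_{\st_\Delta H_\ua}G_\ua;A)[-|G_\ua|-1],
\]
and the same with $\La$ in place of $\Delta$. I will check that these isomorphisms are compatible with the surjection $\pi$. Both isomorphisms send the basis element of $[A[\Delta]_{\monomF{F}}]_\ua$ to the dual basis vector of $F\sm G_\ua$. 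The map $\pi$ sends $[A[\Delta]_{\monomF F}]_\ua$ identically onto $[A[\La]_{\monomF F}]_\ua$ when the latter is nonzero, and kills it otherwise. Under the identification, therefore, $\pi$ becomes restriction of cochains from $\link_{\st_\Delta H_\ua}G_\ua$ to its subcomplex $\link_{\st_\La H_\ua}G_\ua$. It follows that the kernel $[\Cv^\bu(J)]_\ua$ is isomorphic, with the same shift, to the relative cochain complex $\simpchain^\bu(\link_{\st_\Delta H_\ua}G_\ua,\ \link_{\st_\La H_\ua}G_\ua;A)$, that is, to cochains supported on faces of the first complex not in the second.

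\textbf{Step 2: vanishing unless $\ua\le\Zo$.} If $H_\ua\ne\nl$, pick $v\in H_\ua$. Then each of $\link_{\st_\Delta H_\ua}G_\ua$ and $\link_{\st_\La H_\ua}G_\ua$ is either $\Nl$ or a cone with apex $v$, and in both cases it has zero reduced cohomology. I will apply the long exact sequence of the pair, which is just the long exact sequence of the graded piece of $(\dagger)$. Since both outer terms vanish, the relative cohomology vanishes, so $[H^j(J)]_\ua=0$. There is one subtle point I expect to be the main obstacle: the case where the big complex is a cone but the small one is $\Nl$, or one of them is $\{\nl\}$. I will handle it by checking directly that a cone with apex $v$ has acyclic augmented cochain complex, and that $\Nl$ has zero complex. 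Then the five lemma, or simply exactness, gives acyclicity of the kernel in all cases, including degree $-1$.

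\textbf{Step 3: the case $\ua\le\Zo$.} Here $H_\ua=\nl$, so $\st_\Delta\nl=\Delta$ and $\st_\La\nl=\La$. The relative complex from Step~1 is then the relative cochain complex of the pair $(\link_\Delta G_\ua,\link_\La G_\ua)$. By Discussion~\ref{relcohomJ}, its cohomology is the reduced relative cohomology. Taking cohomology in degree $j$ and undoing the shift $[-|G_\ua|-1]$ yields the formula $\tH^{j-|G_\ua|-1}(\link_\Delta G_\ua,\link_\La G_\ua;A)$, with the convention that everything is zero when $G_\ua\notin\Delta$.
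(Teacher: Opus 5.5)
Your proposal is correct and follows essentially the paper's route. The paper deduces Theorem~\ref{cohJ} from Discussion~\ref{relcohomJ}: it splits $(\dagger)$ into $\ua$-components, identifies the components for $A[\Delta]$ and $A[\La]$ with the shifted cochain complexes of $\link_{\st_\Delta H_\ua}G_\ua$ and $\link_{\st_\La H_\ua}G_\ua$ (under which $\pi$ becomes restriction of cochains), and uses the cone argument from the proof sketch of Theorem~\ref{thm-og-hochsform-localcohomver} when $H_\ua\neq\nl$. Your worry about $\{\nl\}$ in Step~2 is moot: when $H_\ua\neq\nl$, any such link that contains $\nl$ also contains the vertex $\{v\}$, so it is a genuine cone.
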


\begin{rmk}\label{empty}  Recall that the link of a subset of the vertices of an ordinary simplicial complex is $\Nl$ when the subset is not a face; see Definition~\ref{def-link-star}. Thus $\tH^{\bu}(\Sigma; A) = 0$ when $\Sigma := \Nl$ is the empty simplicial complex, while if $\Sigma =\{ \nl \}$ then $\tH^{-1}(\Sigma; \, A) \cong A$ while $\tH^i(\Sigma;\, A) =0$ 
if $i \not= -1$. See Remark~\ref{0}. 
 \end{rmk}

\begin{discussion}\label{rel} We include here some brief remarks about 
relative cohomology. Note that \cite{Hat02} often refers to $\Delta$-complexes,
which are defined in  \cite[\S2.1]{Hat02}. A simplicial complex is a $\Delta$-complex
and a $\Delta$-complex is a CW-complex \cite[p.\,107]{Hat02}.  $\Delta$-complexes are
called {\it semi-simplicial complexes} in \cite{EZ50}, where they were introduced.

If $Y \inc X$, where $Y$ is a non-empty closed subspace of $X$, is
the geometric realization of an inclusion of CW-complexes (and, hence, of
$\Delta$-complexes or of simplicial complexes), 
then $(X,\,Y)$ is a {\it good pair}, i.e. $Y$ is a 
deformation retract of an open neighborhood in $X$.  See \cite[Appendix,\,Prop.\,A.5]{Hat02}. 
For good pairs  $(X,\,Y)$  the reduced relative cohomology  $\tH^{\bullet}(X,\,Y)$ is 
the same as $\tH^{\bullet}(X/Y,\,Y/Y)$ over any coefficient ring. (This is also true for
reduced relative homology). Here, $X/Y$ is the quotient space, so that $Y/Y$ is a 
single point when $Y$ is nonempty. See \cite[Thm.\,2.13,\,\S3.1]{Hat02}.  When $Y = \{y\}$ is 
a single point of $X$, 
then $H^i(X,\{y\};A)\cong H^i(X;A)$ for all $i>0$, and evaluation at $y$ gives a split exact sequence
\[
 0 \to H^0(X,\{y\};A)\to H^0(X;A)\to A\to 0.
\]
If $X$ has finitely many connected components, the first two modules are free, and their ranks differ by one. 
Note also that if $Y$ is nonempty, 
$\tH^{\bu}(X,\,Y) \cong H^{\bu}(X,\,Y)$, since the empty set is a face of $Y$. 
\end{discussion}

\begin{remark}\label{relhom}  When the subcomplexes in the pairs have nonempty geometric realizations, reduced relative simplicial cohomology agrees with ordinary relative cohomology and is invariant under homotopy equivalences of pairs. See \cite[\S4]{Spa95}. For void complexes and complexes consisting only of the empty face, the augmentation data must also be retained; the geometric pair alone does not determine the relative cohomology. In all cases, homotopies of simplicial pairs that induce homotopies of their augmented relative cochain complexes preserve the cohomology.
\end{remark}

\subsection{Local cohomology for \texorpdfstring{$t$}{t}-Stanley-Reisner rings} 
\label{subsec-loc-cohom-tSR}

\begin{prop} 
    \label{prop-tSR-localization-components}
    Let $(V,t)$ be a discrete valuation ring with fraction field $L$ and residue field $K$, and let $\Delta$ be a simplicial complex on $\zvertset$. Let $F \in \Delta$. The localization $\tSRrcomp{\Delta}[1/\tmonomF{F}]$ is $\Z^n$-graded, and for $\Zn{a} \in \Z^n$ the degree $\Zn{a}$ homogeneous component is
    \[
    \left[ \tSRrcomp{\Delta} \left[\frac{1}{\tmonomF{F}} \right]\right]_{\Zn{a}} \cong \begin{cases}
        L &\text{if } \vertex{0} \in F, \; G_{\Zn{a}} \subseteq F, \text{ and } H_{\Zn{a}} \cup F \in \Delta,\\
        V &\text{if } \vertex{0} \notin F, \; G_{\Zn{a}} \subseteq F, \text{ and } \vertex{0} \cup H_{\Zn{a}} \cup F \in \Delta,\\
        K &\text{if } \vertex{0}\notin F, \; G_{\Zn{a}}\subseteq F, \; H_{\Zn{a}}\cup F \in \Delta \text{ and } \vertex{0} \cup H_{\Zn{a}}\cup F \notin \Delta,\\
        0 &\text{if } G_{\Zn{a}}\not\subseteq F \text{ or } H_{\Zn{a}}\cup F \notin \Delta,
    \end{cases}
    \]
    where $G_{\Zn{a}} := \{\vertex{i} \;|\; a_i < 0\}$ and $H_{\Zn{a}} := \{ \vertex{i} \;|\; a_i > 0\}$ are simplices in $\vertset$.
\end{prop}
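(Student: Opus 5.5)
We compute the $\Z^n$-graded components of the localization directly from the description of $\tSRrcomp{\Delta}$ in Proposition~\ref{prop-tSRbasics}(a). The plan is to write the localization as a direct limit and read off each graded piece as a limit of copies of $V$, $K$, or $0$.

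\textbf{Setup.} Write $R=\tSRrcomp{\Delta}$ and $f=\tmonomF{F}$. Put $F^+ := F\sm\{\vertex{0}\}$ and $\monomF{F^+}=\prod_{v_i\in F^+}x_i$, so that $f=t^{\epsilon}\monomF{F^+}$, where $\epsilon=1$ if $\vertex{0}\in F$ and $\epsilon=0$ otherwise. Since $f$ has degree $\monomF{F^+}$ in $\Z^n$, the localization
\[
R_f=\varinjlim\,(R\xrightarrow{f}R\xrightarrow{f}\cdots)
\]
is $\Z^n$-graded. Its degree $\ua$ piece is the direct limit over $N$ of $[R]_{\ua+N\cdot\mathbf{1}_{F^+}}$, where $\mathbf{1}_{F^+}$ is the indicator vector of $F^+$. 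The transition maps are multiplication by $f$.

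\textbf{Vanishing case.} Suppose $G_\ua\not\subseteq F^+$. Then some $a_i<0$ with $v_i\notin F^+$, so $\ua+N\mathbf{1}_{F^+}\notin\N^n$ for every $N$, and the limit is $0$. For $N\gg0$ the degree $\ua+N\mathbf{1}_{F^+}$ lies in $\N^n$ and has support $S=H_\ua\cup F^+$. By Proposition~\ref{prop-tSRbasics}(a) the term $[R]_S$ is $V$ if $S\cup\{\vertex{0}\}\in\Delta$, is $K$ if $S\in\Delta$ but $S\cup\{\vertex{0}\}\notin\Delta$, and is $0$ if $S\notin\Delta$. The last possibility gives the fourth case.

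\textbf{Remaining cases.} Here $G_\ua\subseteq F$ should be read as $G_\ua\subseteq F^+$, which is automatic since $G_\ua\subseteq\vertset$. Also $H_\ua\cup F\in\Delta$ is equivalent to $S\in\Delta$ when $\vertex{0}\in F$, and it also matches the stated conditions when $\vertex{0}\notin F$. We analyze the transition map $\monompos{m}\mapsto f\monompos{m}$. The monomial part sends the basis element $\monompos{m}$ of one copy to the basis element of the next, because the support stays equal to $S$.
\begin{itemize}
\item If $\epsilon=0$, the transition maps are isomorphisms $V\to V$ or $K\to K$. The limit is therefore $V$ or $K$, according to whether $\vertex{0}\cup H_\ua\cup F\in\Delta$, which gives the second and third cases.
\item If $\epsilon=1$, then $F\in\Delta$ and $\vertex{0}\in F$, so $S\cup\{\vertex{0}\}=H_\ua\cup F$. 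When this set lies in $\Delta$, each term is $V$ and the transition map is multiplication by $t$. The limit $\varinjlim(V\xrightarrow{t}V\to\cdots)$ is $V_t=L$.
\item If $\epsilon=1$ and $H_\ua\cup F\notin\Delta$, the terms are $K$ or $0$. Multiplication by $t$ kills $K$, so the limit is $0$, which is consistent with the fourth case.
\end{itemize}

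\textbf{Main subtlety.} The step that needs care is justifying that the transition map is multiplication by the scalar $t$ on the chosen generators, with no extra torsion phenomena. This follows because each graded piece is cyclic over $V$, generated by $\monompos{m}$, and $f\monompos{m}=t^\epsilon\,\monompos{m+\mathbf{1}_{F^+}}$.
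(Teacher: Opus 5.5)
Your argument is correct and is essentially the paper's proof in different packaging: the paper works with the Laurent monomial $u=\underline{x}^{\underline{a}}$ in the localization and decides when $u$ (resp.\ $t^m u$) vanishes by applying the square-free support test to $f^k u$, while you read the same information off the direct limit of the graded pieces $[R]_{\underline{a}+N\mathbf{1}_{F^+}}$ supplied by Proposition~\ref{prop-tSRbasics}(a), where the transition maps act on the generators as multiplication by $t^{\epsilon}$. One small slip: your preliminary remark that $H_{\underline{a}}\cup F\in\Delta$ is equivalent to $S\in\Delta$ when $v_0\in F$ has the two cases reversed, since for $v_0\in F$ one has $H_{\underline{a}}\cup F=S\cup\{v_0\}$; this is the identity your bullets actually use, and they use it correctly.
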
   

\begin{proof}
Let $R := \tSRrcomp{\Delta}$, $s:=\tmonomF{F}$, and $S=R[1/s]$.
If $G_{\Zn{a}}\not\subseteq F$, the degree-$\Zn{a}$ part of every
fraction with denominator a power of $s$ is zero, because a negative
exponent would occur in a variable that has not been inverted. Suppose
now that $G_{\Zn{a}}\subseteq F$. Then
$u=\monompos{a}$ is a well-defined Laurent monomial in $S$. Every
degree-$\Zn{a}$ fraction is a scalar multiple of $u$, with scalars in
$V$ if $v_0\notin F$ and in $L$ if $v_0\in F$.
For sufficiently large $k$, $s^ku$ is a $t$-monomial in $R$ whose
support is $H_{\Zn{a}}\cup F$. The square-free defining ideal therefore
gives
\[
 u=0\text{ in }S
 \quad\Longleftrightarrow\quad H_{\Zn{a}}\cup F\notin\Delta.
\]
If $v_0\in F$ and $u\ne0$, then $S_{\Zn{a}}=Lu\cong L$.
If instead $v_0\notin F$ and $u\ne0$, the same support test gives, for
every $m\geq1$,
\[
 t^m u=0\text{ in }S
 \quad\Longleftrightarrow\quad
 \{v_0\}\cup H_{\Zn{a}}\cup F\notin\Delta.
\]
Since every nonzero element of $V$ is a unit times a nonnegative power
of $t$, the annihilator of $u$ in $V$ is $0$ when this union is a face,
and is $tV$ otherwise. Consequently $S_{\Zn{a}}$ is $V$ or $K$,
respectively. Together with the two zero cases, this proves the stated
formula.
\end{proof}

\begin{rmk} \label{rmk-cech-decomposition}
    Fix $\Zn{a} \in \Z^n$, and let $\check{C}^\bullet$ be the \Cech complex of $\tSRrcomp{\Delta}$ for $t, \varsn$. Using Proposition \ref{prop-tSR-localization-components}, we can decompose the $\Zn{a}^{th}$ graded piece of the $i^{th}$ term of the \v Cech complex:
    \[
       \left[\check{C}^i \right]_{\Zn{a}} = L^{\oplus \calig{A}^i} \oplus V^{\oplus \calig{B}^i} \oplus K^{\oplus \calig{C}^i},
    \]
    where
    \begin{align*}
        \calig{A}^i &:= \{F \in \Delta : |F| = i, \; \vertex{0} \in F, \; G_{\Zn{a}} \subseteq F, \text{ and } H_{\Zn{a}} \cup F \in \Delta\}\\
        \calig{B}^i &:= \{F \in \Delta -\vertex{0}: |F| = i, \;  G_{\Zn{a}} \subseteq F, \text{ and } H_{\Zn{a}} \cup F \in \link_\Delta \vertex{0}\}\\
        \calig{C}^i &:= \{F \in \Delta - \vertex{0} : |F| = i, \; G_{\Zn{a}} \subseteq F,\; H_{\Zn{a}} \cup F \in \Delta, \text{ and } H_{\Zn{a}} \cup F \notin \link_\Delta \vertex{0}\}.\\
    \end{align*}
    
    In \cite[Lemma 5.3.7]{brunsherz}, it is shown that the set
    \[
     \calig{A}^i \cup \calig{B}^i \cup \calig{C}^i = \{ F \in \Delta : |F| = i, G_{\Zn{a}} \subseteq F, \text{ and } H_{\Zn{a}} \cup F \in \Delta\}\orange{.}
    \]
     is in bijection with the $(i-|G_{\Zn{a}}|-1)$-faces of $\link_{\st_\Delta H_{\Zn{a}}}G_{\Zn{a}}$, and that in the equicharacteristic setting this bijection induces an isomorphism of complexes from the $\Zn{a}^{th}$ graded piece of the \v Cech complex to the cochain complex of $\link_{\st_\Delta H_{\Zn{a}}}G_{\Zn{a}}$ (with the appropriate degree shift), which turns out to be contractible and thus acyclic whenever $H_{\Zn{a}} \neq \nl$. 

     We shall see that, in our setting, the same bijection of faces now induces an isomorphism to the cochain complex of $\link_{\st_\Delta H_{\Zn{a}}} G_{\Zn{a}}$ with coefficients in a \textit{cellular sheaf}. 
\end{rmk}

\subsubsection{A brief introduction to cellular sheaf cohomology} 
\label{subsec-cellular-sheaves}

\begin{defn} \label{def-cellularsheaf} \cite[Def. 4.1.6]{curry}
Let $\cC$ be an abelian category, and let $\Delta$ be a simplicial complex. View $\Delta$ as a category whose objects are faces of $\Delta$ and whose morphisms are face inclusions $F \inj G$. A $\cC$-valued \emph{cellular sheaf} on $\Delta$ is a functor $\mathscr{F}:\Delta\to \cC$.
\end{defn}

\begin{rmk}
More explicitly, for each inclusion of faces $F \subseteq G$, we have a ``restriction map" $ \scr{F}(F) \to \scr{F}(G)$ in the category $\cC$ such that for $F \subseteq G \subseteq H$ we have a commuting diagram

 \[\begin{tikzcd}
  {\scr{F}(F)} & {\scr{F}(G)} & {\scr{F}(H)}
  \arrow[from=1-1, to=1-2]
   \arrow[curve={height=-18pt}, from=1-1, to=1-3]
   \arrow[from=1-2, to=1-3]
\end{tikzcd}\]

The use of the term ``restriction map" here may seem a bit suspect at first glance. For a sheaf $\mathscr{F}$ on a topological space $X$, the restriction maps are morphisms $\mathscr{F}(U) \to \mathscr{F}(V)$ for open subsets $V$ and $U$ such that $V$ is \textit{contained in} $U$. Put plainly, the restriction maps literally describe the restriction of $\mathscr{F}(U)$ to a smaller open set. For cellular sheaves, the ``restriction maps" appear to be pointing the wrong direction!
 
There is nevertheless a natural way to view cellular sheaves as sheaves in the classic topological sense. Equip $\Delta$ with the \emph{upper Alexandrov topology}, where the open sets are those sets closed under upwards inclusion; cellular sheaves are then equivalent to topological sheaves on $\Delta$.  The cohomology used below is the augmented cellular cohomology, with a term in degree $-1$ at the empty face, as in reduced simplicial cohomology. It must be distinguished from the ordinary sheaf cohomology on the full Alexandrov face poset: for a nonvoid complex, global sections on that poset are $\sF(\nl)$, an exact functor, so the higher derived-functor cohomology vanishes. For the void complex it also vanishes.  Cf.~Discussion~\ref{simp-vs-sing}.
\end{rmk}

\begin{defn} \label{def-cellularsheafcohomology} \cite[6.2.1]{curry} \cite[9.3]{EAT}
  Let $\Delta$ be a simplicial complex, $\cC$ an abelian category, and $\scr{F}: \Delta \to \cC$ a cellular sheaf. The \emph{(reduced) cochain complex of $\Delta$ with coefficients in $\scr{F}$} has terms
 \[
     \simpchain^i(\Delta; \scr{F}) := \bigoplus_{\dim F = i} \scr{F}(F)
 \]
 The differential $\cobnd_{\scr{F}}: \simpchain^i(\Delta; \scr{F}) \to \simpchain^{i+1}(\Delta; \scr{F})$ has coordinate maps $\coord{\cobnd_\scr{F}}{F}{G}: \scr{F}(F) \to \scr{F}(G)$ whenever $F \subseteq G$ for an $i$-face $F$ and an $(i+1)$-face $G$. These maps are the restriction maps provided by the sheaf with the sign $\bndsgn{\oriented{F}}{\oriented{G}} = \pm 1$, as outlined in Definition \ref{def-simpchain}. We denote the cohomology of this complex by $\tH^\bullet(\Delta; \scr{F})$. Note that $\tC^{\bullet}(\Delta; \sF)$ has a term in degree $-1$. When $\cC$ is a category of modules, we may also think of $\simpchain^i(\Delta; \scr{F})$ as formal linear combinations of the $i$-simplices of $\Delta$
such that the coefficient of a given $i$-simplex $F$ is an element of $\sF(F)$. Of course, if $\sF$ takes the value $A$ on every face and all restriction maps are the identity, this coincides with the usual definition 
of reduced simplicial cohomology with coefficients in $A$.
 \end{defn}

\subsubsection{Application of cellular sheaf cohomology to local cohomology of \texorpdfstring{$t$}{t}-Stanley-Reisner rings} 

The theory of cellular sheaf cohomology was developed, in a special case, in the unpublished thesis \cite{shepard}, pursuing ideas first presented in seminar by Fulton, Goresky, MacPherson, and McCrory in 1977-78. In the case where the coefficients are taken to be vector spaces over $\Q$, the thesis investigates the bounded constructible derived category of sheaves on a stratified topological space $X$ and, for certain choices of $X$, a bounded constructible {\it cellular} derived category, which is more concrete in a number of ways.  In several cases, including the case of the geometric realization of a finite simplicial complex, the categories are shown to be equivalent. The theory became of great interest to applied mathematicians beginning with \cite{curry}. There is an exposition in \cite{EAT}, whose terminology we follow here, where coefficients in the category of modules over a ring are allowed. The treatment of the cohomology theory we give, following \cite{EAT}, involves comparatively little machinery and is reasonably self-contained.  In particular, it does not require knowledge of derived categories.  The cohomology theory we consider arises quite naturally in studying the local cohomology of $t$-Stanley-Reisner rings.

\begin{prop} \label{prop-cech-vs-sheaf}
 Let $(V,t)$ be a discrete valuation ring with fraction field $L$ and residue field $K$, and let $\Delta$ be a simplicial complex on $\zvertset$. Let $\check{C}$ be the \Cech complex of $\tSRrcomp{\Delta}$ for $t, \varsn$. Fix a multidegree $\Zn{a} \in \Z^n$, and let $\Lambda := \link_{\st_\Delta H_{\Zn{a}}} G_{\Zn{a}}$. Consider the cellular sheaf of $V$-modules $\scr{F}$ on $\Lambda$ defined by
  \[\scr{F}(F) := \begin{cases}
        L &\text{if } \vertex{0} \in F,\\
        V &\text{if } F \in \link_{\Lambda} \vertex{0},\\
        K &\text{otherwise},
    \end{cases}  \]
where the restriction maps are the natural inclusion $V \inj L$, the natural surjection $V \surj K$, and the identity maps on $V$, $L$, and $K$. Then $[\check{C}]_{\Zn{a}} \cong \simpchain(\Lambda; \scr{F})[-1-|G_{\Zn{a}}|]$ as complexes of $V$-modules.
\end{prop}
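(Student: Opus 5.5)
We follow the strategy of \cite[Lemma 5.3.7]{brunsherz}, using Proposition~\ref{prop-tSR-localization-components} in place of its equicharacteristic analogue. Write $G = G_{\Zn{a}}$, $H = H_{\Zn{a}}$, $\Lambda = \link_{\st_\Delta H} G$.

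\textbf{Step 1: the face bijection.} By Remark~\ref{rmk-cech-decomposition}, the summands of $[\check C^i]_{\Zn{a}}$ are indexed by faces $F \in \Delta$ with $|F| = i$, $G \subseteq F$ and $H \cup F \in \Delta$. The assignment $F \mapsto F' := F \setminus G$ gives a bijection from this set onto the faces of $\Lambda$ of dimension $i - |G| - 1$. Indeed, $F' \cap G = \nl$, and $F' \cup G = F \in \st_\Delta H$ exactly because $F \cup H \in \Delta$. The inverse is $F' \mapsto F' \cup G$. This bijection respects inclusions, and $v_0 \in F$ iff $v_0 \in F'$, since $v_0 \notin G$ (indeed $G \subseteq \vertset$).

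\textbf{Step 2: matching the coefficients.} We check that the summand attached to $F$ by Proposition~\ref{prop-tSR-localization-components} is $\scr F(F')$.
\begin{itemize}
\item If $v_0 \in F$, both are $L$.
\item If $v_0 \notin F$, the summand is $V$ exactly when $\{v_0\} \cup H \cup F \in \Delta$. We must show this holds iff $F' \in \link_\Lambda v_0$, that is, iff $F' \cup \{v_0\} \in \Lambda$. Since $v_0 \notin G$, the condition $F' \cup \{v_0\} \in \Lambda$ means $F' \cup \{v_0\} \cup G \in \st_\Delta H$, i.e.\ $F \cup \{v_0\} \cup H \in \Delta$. This is the same condition.
\item Otherwise the summand is $K$, which is the remaining case of $\scr F$.
\end{itemize}
The signs also agree. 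The \Cech sign for $F \subset F \cup \{v_j\}$ is $\bndsgn{\oriented F}{\oriented{F\cup v_j}}$. For $\Lambda$ the sign is $\bndsgn{\oriented{F'}}{\oriented{F'\cup v_j}}$, and these differ by $(-1)^{\#\{g \in G : g < v_j\}}$. As in \cite{brunsherz}, we absorb this discrepancy by rescaling each basis element $\langle F' \rangle$ by the sign that moves the vertices of $G$ to the front of the ordered list of $F$.

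\textbf{Step 3: matching the maps (main obstacle).} We must show that the \Cech coordinate map between the components for $F \subset F \cup \{v_j\}$ is, after the identifications above, the restriction map of $\scr F$. The components are identified with $L$, $V$, or $K$ via the Laurent monomial $u = \monompos{a}$. The localization map sends $u$ to $u$, so it is induced by the identity on scalars, followed by the natural map between the coefficient modules.

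So we run through the possible type changes.
\begin{itemize}
\item Same type: the map is the identity.
\item $V \to L$ (adjoining $v_0$): the natural inclusion.
\item $V \to K$ (the larger face no longer lies in the link of $v_0$): reduction mod $t$.
\item $K \to L$: this cannot occur. If $F \cup \{v_0\} \cup H \in \Delta$, then the component for $F$ is $V$, not $K$.
\end{itemize}
Hence only the allowed restriction maps of $\scr F$ appear, and $\scr F$ is indeed a functor. The one point needing care is to check that the localization $R_{\tmonomF{F}} \to R_{\tmonomF{F\cup v_0}}$ inverts $t$, so that it really is $V \hookrightarrow L$ on these components. This holds since $\tmonomF{F\cup v_0}$ is divisible by $t$.

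Combining the three steps gives an isomorphism of complexes. The shift $[-1-|G|]$ comes from Step 1: \Cech degree $i$ corresponds to $\Lambda$-dimension $i - |G| - 1$.
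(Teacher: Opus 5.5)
Your proposal follows the paper's proof: the Bruns--Herzog bijection $F\mapsto F\setminus G_{\Zn{a}}$, the matching of the $L$, $V$, $K$ summands via Proposition~\ref{prop-tSR-localization-components}, and a sign normalization. Your Step~3 is more explicit than the paper about why the localization maps become $V\hookrightarrow L$, $V\twoheadrightarrow K$, or identities.

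One correction is needed in the sign bookkeeping. Rescaling by the sign that moves $G_{\Zn{a}}$ to the \emph{front} does not fully absorb the discrepancy. After that rescaling, the \v{C}ech coordinate is $(-1)^{|G_{\Zn{a}}|}\bndsgn{\oriented{F'}}{\oriented{F'\cup v_j}}$ rather than the cellular sign. Moving $G_{\Zn{a}}$ to the back instead, which is the paper's reordering with $G_{\Zn{a}}$ last, matches the incidence signs exactly. With either choice, you still need a degree-wise sign $(-1)^{ci}$ in \v{C}ech degree $i$ to absorb the remaining global sign and the sign convention for the shift $[-1-|G_{\Zn{a}}|]$. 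The paper does this with $(-1)^{ri}$, where $r=|G_{\Zn{a}}|+1$.
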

\begin{proof}
Let $\calig{A}^i$, $\calig{B}^i$, and $\calig{C}^i$ be as in Remark \ref{rmk-cech-decomposition}, and define
\begin{align*}
 \calig{X}^j &:= \{F' \in \Lambda : \dim F' = j, \; \vertex{0} \in F'\}  & &(L\text{-valued faces)}\\
 \calig{Y}^j &:= \{F' \in \Lambda : \dim F' = j, \; F' \in \link_\Lambda \vertex{0}\}  & &(V\text{-valued faces)}\\
\calig{Z}^j &:= \{F' \in \Lambda : \dim F' = j, \; \vertex{0} \cup F' \notin \Lambda \}  & &(K\text{-valued faces)}
\end{align*}

Note that $\simpchain^j(\Lambda; \scr{F}) = L^{\oplus \calig{X}^j} \oplus V^{\oplus \calig{Y}^j} \oplus K^{\calig{Z}^j}$ as a $V$-module. From \cite[Lemma 5.3.7]{brunsherz}, we have a bijection
\[
       \phi: \calig{A}^i \cup \calig{B}^i \cup \calig{C}^i \to \calig{X}^{i-|G_{\Zn{a}}|-1} \cup \calig{Y}^{i-|G_{\Zn{a}}|-1} \cup \calig{Z}^{i-|G_{\Zn{a}}|-1} \qquad F \mapsto F - G_{\Zn{a}}
\]
It is straightforward to check that $\phi(\calig{A}^i) = \calig{X}^{i-|G_{\Zn{a}}|-1}$, $\phi(\calig{B}^i) = \calig{Y}^{i-|G_{\Zn{a}}|-1}$, and $\phi(\calig{C}^i) = \calig{Z}^{i-|G_{\Zn{a}}|-1}$. Thus, $\phi$ induces an isomorphism of $V$-modules $[\check{C}^i]_{\Zn{a}} \cong \simpchain^{i-|G_{\Zn{a}}|-1}(\Lambda; \scr{F})$.

For a fixed multidegree \cred{$\Zn{a} \in \Z^n$}\blue{$\Zn{a}\in\Z^n$} \comm \gre{...this is the same sequence of characters?}\fi, reorder the vertices so that those of $G_{\Zn{a}}$ are last. Deleting these final vertices preserves the incidence sign for adjoining any other vertex. Thus the face bijection identifies the signed localization maps with the cellular coboundary maps before the sign convention for a shift is imposed. Put $r=|G_{\Zn{a}}|+1$. The differential on $\simpchain^\bullet(\Lambda;\scr{F})[-r]$ is $(-1)^r$ times the cellular differential. Consequently, multiplying the face identification in \Cech degree $i$ by $(-1)^{ri}$ gives an isomorphism of complexes with the stated shift.
\end{proof}

\begin{discussion}\label{XYZ}  In this discussion and Proposition~\ref{vanishing},
we develop the ideas in Proposition~\ref{prop-cech-vs-sheaf} further.
This will provide the tools we need for the proof of Theorem~\ref{Main4}.

Let $\La$ be a finite simplicial complex on an ordered vertex set in which the first element 
is $v_0$, which need not be a vertex of $\La$. Let $\La_0:= \link_\La v_0 \inc \La-v_0$.  As usual, let $(V,tV)$ be a DVR with fraction field $L$ and 
residue field $K$, and let $\sF$ have the values and maps in Proposition \ref{prop-cech-vs-sheaf}. 
Let $C^\bullet:= \tC^\bullet(\La;\sF)$.

The faces contributing to $C^\bullet$ fall into three classes: those containing $v_0$, those in $\La_0$, and those in $(\La-v_0)\sm \La_0$. Their coefficients are $L$, $V$, and $K$, respectively.
Let $X^\bullet$ and $Z^\bullet$ be the subcomplexes supported on the first and third classes, \resp.

In a simplicial complex, if $\sigma \inc \tau$ are faces, $\tau$ is called a {\it coface} of 
$\sigma$. Evidently,  a coface of a face containing $v_0$ again contains $v_0$. Also, 
if $T\in \La-v_0 \sm \La_0$, no coface of $T$ in $\La$ can contain $v_0$ or lie in $\La_0$. These observations show that $X^\bullet$ and $Z^\bullet$ are subcomplexes. The quotient by their direct sum is the complex $Y^\bullet$ supported on $\La_0$. Consequently,
\ben
\item \label{eq-SES-XYZ}  $0\to X^\bullet\oplus Z^\bullet \to C^\bullet\to Y^\bullet \to 0$  
\een
is a short exact sequence of complexes. In general, $Y^\bullet$ is a quotient complex,
but not a subcomplex of $C^\bullet$.

Deleting $v_0$ from a face that contains it gives that
\benr
\item \label{eq-XYZ-vals} $X^\bullet\cong\tC^\bullet(\La_0;L)[-1],\ 
 Y^\bullet\cong\tC^\bullet(\La_0;\,V),\ \text{and}\ 
 Z^\bullet\cong\tC^\bullet(\La-v_0,\La_0;\,K)$.  
\een
For the last identification, relative cochains are the cochains on $\La-v_0$ that vanish on 
$\La_0$, so their basis faces are exactly $\La-v_0\sm \La_0$. In the first identification, the shift includes the usual minus sign on the differential: deleting a vertex other than the initial $v_0$ changes its position by one.

The snake lemma construction of the cohomology of sequence (\ref{eq-SES-XYZ}) gives
\benr                      
\item \quad \label{eq-LES-XYZ} $\cdots\to \Homol^{j-1}(Y) \arwf{\delta^{j-1}}H^j(X)\oplus \Homol^j(Z) \to \Homol^j(C) \to \Homol^j(Y)
 \arwf{\delta^j}\Homol^{j+1}(X)\oplus \Homol^{j+1}(Z) \to \cdots$.  
\een
Here is an explicit description of the connecting map. Represent a class in $\Homol^j(Y)$ by a cocycle $y$ and lift it to $C^j$ by setting its $X$- and $Z$-coordinates equal to zero. Since $d_Yy=0$, the differential of this lift lies in $X^{j+1}\oplus Z^{j+1}$. Its cohomology class is $\delta^j([y])$. Changing the representative or the lift changes this element by a coboundary. This is the standard snake lemma construction; see also \cite[Lemma 12.13.12,\,Tag\,\,0117]{Stacks}.
For the vanishing argument below, (\ref{eq-XYZ-vals}) and (\ref{eq-LES-XYZ}) suffice. 
Let $\la^j:\tH^j(\La_0;\,V)\to\tH^j(\La_0;\,L)$, and
$ \pi^j:\tH^j(\La_0;\,V)\to\tH^j(\La_0;\,K)$
be the coefficient maps, and let 
$\partial^j:\tH^j(\La_0;\,K)\to\tH^{j+1}(\La-v_0,\La_0;\,K)$
be the connecting map for the pair $(\La-v_0,\Lambda_0)$. Note that $\la^j$,
which is induced by localization, fails to be injective precisely when $\tH^j(\La_0;\,V)$ has nonzero $V$-torsion. From (\ref{eq-XYZ-vals}) and (\ref{eq-LES-XYZ}) we have
\benr
\item \label{eq-alpha} $\delta^j = \alpha^j  := (\la^j,\partial^j\pi^j): \tH^j(\La_0;\,V)\to
 \tH^j(\La_0;\,L)\oplus\tH^{j+1}(\La-v_0,\, \La_0;\,K).$  
\een

The $X$ part of the lifted differential is coefficient inclusion: adjoining $v_0$ has 
incidence sign $+1$. The $Z$ part is obtained by reducing $y$ modulo $t$, extending it by zero to 
$\La-v_0$, and taking its coboundary. This is exactly the definition of $\partial^j\pi^j$.

Thus, a useful short form of (\ref{eq-LES-XYZ}) is 
\benr
\item \label{eq-alpha-SES}$0 \to \Coker\alpha^{j-1}  \to \tH^j(\La;\,\sF) \to \Ker\alpha^j\to 0.$  
\een
In general this sequence need not split. Also, note that $\pi^j$ is not necessarily surjective 
even though $V\to K$ is surjective. \end{discussion}

\begin{prop}[Vanishing when the positive support is nonempty]\label{vanishing}
Let $\Delta$ be a finite simplicial complex on $\{v_0,v_1,\ldots,v_n\}$. Let $G,H\inc\{v_1,\ldots,v_n\}$ be disjoint, with $H\ne\nl$, and put $\La=\link_{\st_\Delta H}G.$
Equip $\La$ with the sheaf of Proposition~\ref{prop-cech-vs-sheaf}. Then
 $\tH^j(\La;\,\sF)=0$ for every $j$.
\end{prop}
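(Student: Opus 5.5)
The plan is to use the short exact sequence (\ref{eq-alpha-SES}) of Discussion~\ref{XYZ}. It shows that $\tH^j(\La;\sF)=0$ for all $j$ as soon as $\alpha^j$ is an isomorphism for every $j$. I will get that by showing that all three complexes entering $\alpha^j$ have vanishing reduced (relative) cohomology: $\La_0=\link_\La v_0$, the complex $\La-v_0$, and the pair $(\La-v_0,\La_0)$. Then $\alpha^j$ is a map $0\to 0$, so both $\Coker\alpha^{j-1}$ and $\Ker\alpha^j$ vanish.

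The first step is to identify $\La$ combinatorially. Unwinding Definition~\ref{def-link-star}, a face $F$ lies in $\La$ exactly when $F\cap G=\nl$ and $F\cup G\cup H\in\Delta$. If $G\cup H\notin\Delta$, then $\La=\Nl$ and every cohomology module is zero, so there is nothing to prove. Otherwise, fix a vertex $h\in H$; note that $h\neq v_0$ because $H\inc\{v_1,\ldots,v_n\}$. Since $G\cap H=\nl$ and $F\cup\{h\}\cup G\cup H=F\cup G\cup H$, we get $F\in\La\Rightarrow F\cup\{h\}\in\La$. So $\La$ is a cone with apex $h$, and it contains the empty face.

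Next I transfer the cone structure, using $h\neq v_0$. If $F\in\La_0$, then $F\cup\{v_0\}\in\La$, hence $F\cup\{h,v_0\}\in\La$, so $F\cup\{h\}\in\La_0$. Thus $\La_0$ is either $\Nl$ (when $v_0\notin\La$) or a cone with apex $h$. Likewise, removing $v_0$ does not affect $h$, so $\La-v_0$ is a cone with apex $h$.

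A cone with apex $h$ has zero augmented cochain cohomology with any coefficients. Concretely, the map sending a cochain $c$ to $F\mapsto \pm c(F\cup\{h\})$ for $h\notin F$, and to $0$ for $h\in F$, is a contracting homotopy, and it works including degree $-1$. The void complex trivially has zero cohomology. Hence $\tH^\bullet(\La_0;V)$, $\tH^\bullet(\La_0;L)$, $\tH^\bullet(\La_0;K)$ and $\tH^\bullet(\La-v_0;K)$ all vanish. The long exact sequence of the pair $(\La-v_0,\La_0)$ with $K$ coefficients then gives $\tH^\bullet(\La-v_0,\La_0;K)=0$. Feeding these into (\ref{eq-alpha}) and (\ref{eq-alpha-SES}) gives the result.

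The main obstacle is bookkeeping: making sure the relative sequence and the degree $-1$ terms behave correctly when $\La_0=\Nl$ versus $\La_0=\{\nl\}$ (see Remark~\ref{relhom}). For that reason I would also keep a direct alternative in reserve. Since $\La$ is a cone on $h\neq v_0$, adjoining $h$ preserves each of the three face types: $v_0\in F$ iff $v_0\in F\cup\{h\}$, and $F\in\La_0$ iff $F\cup\{h\}\in\La_0$. So every restriction map $\sF(F)\to\sF(F\cup\{h\})$ is an identity. The same signed "cone on $h$" operator is therefore a contracting homotopy of $\tC^\bullet(\La;\sF)$ itself, which gives the vanishing directly without the sequence (\ref{eq-alpha-SES}).
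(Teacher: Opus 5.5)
Your proposal is correct and is essentially the paper's proof: both observe that $\La-v_0$ and $\La_0=\link_\La v_0$ are each either void or a cone with apex $h\in H$, conclude that the $X$, $Y$, and $Z$ pieces of Discussion~\ref{XYZ} have zero cohomology (using the relative cochain sequence of the pair for $Z$), and then apply the short exact sequence there. Routing this through the $\alpha^j$ sequence rather than $0\to X\oplus Z\to C\to Y\to 0$ directly changes nothing, and your reserve argument is also valid: because adjoining $h$ preserves the three face types, the restriction maps along $F\subset F\cup\{h\}$ are identities and the signed cone operator contracts $\tC^\bullet(\La;\sF)$ directly.
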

\begin{proof}
Fix $h\in H$. A face $F$ belongs to $\La$ precisely when
$F\cap G=\nl$ and $F\cup G\cup H \in \Delta$.
Whenever these conditions hold for $F$, they also hold for $F \cup \{h\}$. The same assertion 
holds after requiring $v_0\notin F$, and after requiring in addition $F\cup\{v_0\}\in\La$. 
Therefore each of $\La-v_0$ and $\La_0=\link_\La\{v_0\}$
is either void or a cone with cone vertex $h$. Their reduced cohomology vanishes over 
$V$, $L$, and $K$.

In particular, $X$ and $Y$ in (2) have zero cohomology. The usual relative cochain sequence
\[0\to\tC^\bullet(\La-v_0,\La_0;\,K)  \to \tC^\bullet(\La-v_0;\,K)
\to\tC^\bullet(\La_0;\,K)\to 0\]
shows that $Z$ also has zero cohomology. 
Now (1) gives $H^j(C)=0$ for every $j$.
\end{proof}

\begin{thm}\label{Main4}
Let $(V,tV)$ be a DVR with fraction field $L$ and residue field $K$, and let $\Delta$ be a finite simplicial complex on $\{v_0,v_1,\ldots,v_n\}$. Put
\[
 R=\RV,\qquad\m=(t,x_1,\ldots,x_n)R.
\]
For $\ua=(a_1,\ldots,a_n)\in\Z^n$, set
\[
 G_\ua=\{v_i:a_i<0\},\qquad \La_\ua=\link_\Delta G_\ua.
\]
On $\La_\ua$, let $\sF_\ua$ be the cellular sheaf with values
\[
 \sF_\ua(F)=
 \begin{cases}
 L,&v_0\in F,\\
 V,&F\in\link_{\La_\ua}\{v_0\},\\
 K,&F\notin\st_{\La_\ua}\{v_0\},
 \end{cases}
\]
and with restriction maps the identities within each type, $V\inj L$, and 
$V\surj K$. Then
\[
 [H^i_\m(R)]_\ua\cong
 \begin{cases}
 \tH^{i-|G_\ua|-1}(\La_\ua;\sF_\ua),&\ua\leq\Zo,\\
 0,&\text{some }a_i>0.
 \end{cases}
\]
If $G_\ua\notin\Delta$, the right-hand side is understood to be zero. In particular, 
writing $\sF_G$ for the corresponding sheaf on $\link_\Delta G$,
\[
 H^i_\m(R)=0
 \quad\Longleftrightarrow\quad
 \tH^{i-|G|-1}(\link_\Delta G;\sF_G)=0
 \quad\text{for every }G\in\Delta-v_0.
\]
\end{thm}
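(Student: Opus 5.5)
The plan is to compute $H^i_\m(R)$ with the \Cech complex $\Cv^\bullet$ of $R=\RV$ on $t,x_1,\ldots,x_n$. Since $t$ and the $x_i$ generate $\m$, its cohomology is $H^\bullet_\m(R)$. The complex is $\Z^n$-graded, so $[H^i_\m(R)]_\ua=H^i([\Cv^\bullet]_\ua)$, and it suffices to identify each graded strand. Proposition~\ref{prop-cech-vs-sheaf} already does this: $[\Cv^\bullet]_\ua\cong\tC^\bullet(\La;\sF)[-1-|G_\ua|]$, where $\La=\link_{\st_\Delta H_\ua}G_\ua$ and $\sF$ is the sheaf described there. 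Taking cohomology gives
\[
[H^i_\m(R)]_\ua\cong\tH^{i-|G_\ua|-1}(\link_{\st_\Delta H_\ua}G_\ua;\sF).
\]

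I will split into cases according to $H_\ua$. If some $a_i>0$, then $H_\ua\neq\nl$, and $H_\ua$ is disjoint from $G_\ua$ with both contained in $\{v_1,\ldots,v_n\}$. Proposition~\ref{vanishing} then says the right-hand side vanishes in every degree, which gives the second case of the formula.

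If $\ua\leq\Zo$, then $H_\ua=\nl$ and $\st_\Delta\nl=\Delta$, so $\La=\link_\Delta G_\ua=\La_\ua$. I must check that the sheaf of Proposition~\ref{prop-cech-vs-sheaf} agrees with $\sF_\ua$ in the theorem. The value $L$ is assigned to faces containing $v_0$ in both. The value $V$ is assigned to faces of $\link_{\La_\ua}\{v_0\}$ in both. The remaining faces are those not containing $v_0$ with $F\cup\{v_0\}\notin\La_\ua$, which is exactly the condition $F\notin\st_{\La_\ua}\{v_0\}$. The restriction maps also agree. If $G_\ua\notin\Delta$, the link is the void complex $\Nl$, the cochain complex is zero, and this matches the stated convention.

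For the final equivalence, $H^i_\m(R)=0$ holds exactly when every graded piece vanishes. Only $\ua\leq\Zo$ can contribute, and such a piece depends only on $G=G_\ua$, a subset of $\{v_1,\ldots,v_n\}$. Every such $G$ arises from some $\ua$, for instance $a_i=-1$ on $G$ and $0$ elsewhere. Pieces with $G\notin\Delta$ are automatically zero, so the condition reduces to $G\in\Delta-v_0$. The only step that needs care is matching the third sheaf type and the degenerate void and empty-face cases. The real work, including the sign conventions, was already done in Propositions~\ref{prop-cech-vs-sheaf} and~\ref{vanishing}, so I expect no serious obstacle here.
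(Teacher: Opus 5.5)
Your proposal is correct and follows the paper's proof essentially step for step: you identify the degree-$\ua$ strand of the \v{C}ech complex via Proposition~\ref{prop-cech-vs-sheaf}, kill the strands with $H_\ua\neq\nl$ by Proposition~\ref{vanishing}, use $\st_\Delta\nl=\Delta$ when $\ua\leq\Zo$, and realize each $G\in\Delta-v_0$ as a negative support to get the final equivalence. Your explicit check that the ``otherwise'' value in Proposition~\ref{prop-cech-vs-sheaf} matches the condition $F\notin\st_{\La_\ua}\{v_0\}$, including the degenerate void case, is correct; the paper leaves this check implicit.
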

\begin{proof}
Let $\check C^\bullet$ be the \v{C}ech complex for $t,x_1,\ldots,x_n$, and put 
$H_\ua=\{v_i:a_i>0\}$. Proposition~\ref{prop-cech-vs-sheaf} identifies its degree $\ua$ component 
with the shifted cellular complex on
$\link_{\st_\Delta H_\ua}G_\ua$,
with shift $[-|G_\ua|-1]$. If $H_\ua\ne\nl$, Proposition~\ref{vanishing} 
makes its cohomology zero. If $H_\ua=\nl$, then $\st_\Delta\nl=\Delta$, so 
Proposition~\ref{prop-cech-vs-sheaf} gives 
the displayed formula on $\La_\ua=\link_\Delta G_\ua$.

A graded module is zero if and only if all its graded components are zero. 
Every face $G\in\Delta-v_0$ occurs as a negative support: take $a_i=-1$ for 
$v_i\in G$ and $a_i=0$ otherwise. This proves the final equivalence.
\end{proof}

\begin{rmk}
The sheaf $\scr{F}_{\Zn{a}}$ is {\it not} the same as the restriction of the sheaf $\scr{F}$ on $\Delta$ to $\Lambda_{\Zn{a}}$; it is very important here that we calculate the sheaf values on simplices by looking at the link and star of $\vertex{0}$ {\it inside of} $\Lambda_{\Zn{a}}$, not in $\Delta$. In particular, this means that if $\vertex{0}$ is not in the link of $G_{\Zn{a}}$, then the sheaf $\scr{F}_{\Zn{a}}$ on $\Lambda_{\Zn{a}}$ is the constant $K$-valued sheaf, because the link is the empty simplicial complex $\Nl$, 
    and does not contain even the empty face.
\end{rmk}

\begin{examp}\label{RP2}
    We return to the running example of a triangulation of the real projective plane. Let $\Delta$ be the simplicial complex depicted in the diagram for Example \ref{ex-betti-P2}. To compute the local cohomology modules of $\tSRrcomp{\Delta}$, we must examine the links of faces that do not contain $\vertex{0}$. Throughout the following discussion, we use $\scr{F}$ to refer to the sheaf on $\link_\Delta G$ defined as in Theorem \ref{Main4}.

\ben
   \item There are five 2-faces in $\Delta - \vertex{0}$. If $G$ is such a 2-face, then $\link_\Delta G = \{\nl\}$. Notice that $\scr{F}(\nl) = K$ in this case, since $\vertex{0} \notin \link_\Delta G$. It follows that $\simpHomol^{-1}(\link_\Delta G; \scr{F}) = K$, and at all other indices the sheaf cohomology vanishes.

   \item The link of any 1-face in $\Delta$ is a pair of disjoint vertices. There are five 1-faces in $\Delta - \vertex{0}$ for which $\link_\Delta G$ does \emph{not} include $\vertex{0}$. For such $G$, we have
   \[
      \simpHomol^{i}(\link_\Delta G; \scr{F}) = \simpHomol^{i}(\link_\Delta G; K) \cong \begin{cases}
          K &\text{if } i = 0,\\
          0 &\text{else.}
      \end{cases}
   \]

   \item For the remaining five 1-faces of $\Delta-\vertex{0}$, the link \textit{does} contain $\vertex{0}$. In this case, $\simpchain^\bullet(\link_\Delta G; \scr{F})$ is the cochain complex
   \[
      0 \to V \xrightarrow{\begin{pmatrix}
          \iota \\ \pi
      \end{pmatrix}} L \oplus K \to 0
   \]
It is straightforward to check that the cohomology of this complex is isomorphic to $L/V$ at index $0$ and vanishes everywhere else. 

   \item \noindent 
\begin{minipage}[t]{0.75\textwidth}
   For any vertex of $\Delta - \vertex{0}$, the link has the 
   form shown on the right. The colors indicate the values of 
   the sheaf: blue for $L$-valued faces, yellow for $V$-valued faces, and red for 
   $K$-valued faces. The face $\nl$ is $V$-valued. The cochain complex is: 
\end{minipage}%
\hspace{0.8cm}
\begin{minipage}[t]{0.23\textwidth}   

\tikzset{every picture/.style={line width=0.75pt}} 

\begin{tikzpicture}[x=0.75pt,y=0.75pt,yscale=-1,xscale=1, baseline={([yshift=-3.0ex]current bounding box.north)}]\cred{]} 

\draw [color={rgb, 255:red, 208; green, 2; blue, 27 }  ,draw opacity=1 ][line width=3]    (125,95) -- (115,65) ;
\draw [color={rgb, 255:red, 208; green, 2; blue, 27 }  ,draw opacity=1 ][line width=3]    (155,95) -- (125,95) ;
\draw [color={rgb, 255:red, 208; green, 2; blue, 27 }  ,draw opacity=1 ][line width=3]    (155,95) -- (165,65) ; 
\draw [color={rgb, 255:red, 74; green, 144; blue, 226 }  ,draw opacity=1 ][line width=3]    (115,65) -- (140,45) ; 
\draw [color={rgb, 255:red, 74; green, 144; blue, 226 }  ,draw opacity=1 ][line width=3]    (165,65) -- (140,45) ;
\draw  [fill={rgb, 255:red, 74; green, 144; blue, 226 }  ,fill opacity=1 ] (135,45) .. controls (135,42.24) and (137.24,40) .. (140,40) .. controls (142.76,40) and (145,42.24) .. (145,45) .. controls (145,47.76) and (142.76,50) .. (140,50) .. controls (137.24,50) and (135,47.76) .. (135,45) -- cycle ;
\draw  [fill={rgb, 255:red, 248; green, 231; blue, 28 }  ,fill opacity=1 ] (110,65) .. controls (110,62.24) and (112.24,60) .. (115,60) .. controls (117.76,60) and (120,62.24) .. (120,65) .. controls (120,67.76) and (117.76,70) .. (115,70) .. controls (112.24,70) and (110,67.76) .. (110,65) -- cycle ; 
\draw  [fill={rgb, 255:red, 248; green, 231; blue, 28 }  ,fill opacity=1 ] (160,65) .. controls (160,62.24) and (162.24,60) .. (165,60) .. controls (167.76,60) and (170,62.24) .. (170,65) .. controls (170,67.76) and (167.76,70) .. (165,70) .. controls (162.24,70) and (160,67.76) .. (160,65) -- cycle ; 
\draw  [fill={rgb, 255:red, 208; green, 2; blue, 27 }  ,fill opacity=1 ] (120,95) .. controls (120,92.24) and (122.24,90) .. (125,90) .. controls (127.76,90) and (130,92.24) .. (130,95) .. controls (130,97.76) and (127.76,100) .. (125,100) .. controls (122.24,100) and (120,97.76) .. (120,95) -- cycle ; 
\draw  [fill={rgb, 255:red, 208; green, 2; blue, 27 }  ,fill opacity=1 ] (150,95) .. controls (150,92.24) and (152.24,90) .. (155,90) .. controls (157.76,90) and (160,92.24) .. (160,95) .. controls (160,97.76) and (157.76,100) .. (155,100) .. controls (152.24,100) and (150,97.76) .. (150,95) -- cycle ;
\end{tikzpicture}\quad 
\end{minipage}%

   \[
   0 \to V \xrightarrow{\begin{pmatrix}
       \iota \\ 1 \\ 1 \\ \pi \\ \pi
   \end{pmatrix}} L \oplus V^2 \oplus K^2 \xrightarrow{\begin{pmatrix}
       -1 & \iota & 0 & 0 & 0\\
       -1 & 0 & \iota & 0 & 0 \\
       0 & 0 & 0 & -1 & 1\\
       0 & \pi & 0 & -1 & 0\\
       0 & 0 & \pi & 0 & -1
   \end{pmatrix}} L^2 \oplus K^3 \to 0
   \]
   It is straightforward to compute directly that $\simpHomol^q(\link_\Delta \vertex{i}; \scr{F}) \cong L/V$ for $q=1$ and vanishes otherwise.

\item Finally, we must compute the cellular sheaf cohomology of $\Delta$ itself. 
For this calculation, put $\Delta' =\Delta-v_0$ and $\La=\link_\Delta v_0$. The space $|\Delta'|$ is a M\"obius strip and $|\La|$ is its boundary circle. The reduced cohomology of 
$\La$ over $V$ or $L$ is concentrated in degree $1$, where it is $V$ or $L$, respectively. The relative cohomology is 
$$\tH^j(\Delta',\La;K)\cong \begin{cases}
 K,&j=1,2\text{ and }\operatorname{char}K=2,\\
 0,&\text{otherwise}.\end{cases}$$
 
 To verify this without a matrix calculation, retract the M\"obius strip $|\Delta'|$ onto its core circle. Its boundary circle $|\La|$ winds twice around the core. Thus, after choosing generators, the restriction $H^1(\Delta';K)\to H^1(\La;K)$ is multiplication by $2$. Both spaces are connected and have zero cohomology above degree $1$, so the long exact sequence of the pair reduces to
\[0\longrightarrow\tH^1(\Delta',\La;K)\longrightarrow K
\xrightarrow{\,2\,}K\longrightarrow\tH^2(\Delta',\La;K)
\longrightarrow0.
\]
All other relative cohomology groups vanish. This proves the displayed formula.

If $\op{char}K\ne2$, the only nonzero $\alpha^j$ is $\alpha^1:V \inj L.$
Equations (\ref{eq-SES-XYZ})--(\ref{eq-alpha-SES}) give: 

$$\tH^j(\Delta;\sF)\cong \begin{cases}L/V&\text{if $\charac K \neq 2$ and }j=2,\\0&\text{if $\charac K \neq 2$ and }j\ne2.\end{cases}$$

If $\op{char}K=2$, the map
$\partial^1:\tH^1(\La;\,K)\to\tH^2(\Delta',\, \La;\,K)$
is an isomorphism: the restriction $\Homol^1(\Delta';\,K)\to \Homol^1(\La;\,K)$ is 
multiplication by $2$ after choosing generators, because the boundary circle goes twice 
around the core circle of the M\"obius strip. In characteristic $2$ this restriction 
is zero, while $H^2(\Delta';\,K)=0$, so that the long exact sequence of the pair proves 
the assertion. For compatible generators we have
$\alpha^1=(\iota,\pi):V\to L\oplus K, \qquad v\longmapsto(v,\bar v).$
It is injective, and the relevant segment of (\ref{eq-LES-XYZ}) is
$$0\to K\to\tH^1(\Delta;\sF)
 \to V\arwf{(\iota,\pi)}L\oplus K
 \to\tH^2(\Delta;\sF)\to 0.$$
Consequently, if $\charac K = 2$, we have
\[
 \tH^1(\Delta;\sF)\cong K,\qquad
 \tH^2(\Delta;\sF)\cong(L\oplus K)/(\iota,\pi)(V)\cong L/tV.
\]
For the last isomorphism, send $(\ell,\bar a)$ to $\ell-a+tV$, with $a\in V$ any lift of $\bar a$. A different lift changes $\ell-a$ by an element of $tV$; the map is surjective, and its kernel is exactly $(\iota,\pi)(V)$. Multiplication by $t^{-1}$ induces a $V$-module isomorphism $L/tV\cong L/V$, so regardless of the residual characteristic we always have $\simpHomol^2(\Delta;\scr{F}) \cong L/V.$ 
\footnote{Note that in equal characteristic, so that $\widehat{V} \cong K[[t]]$, one can introduce a $\Z^{n+1}$ grading
on $H^{\bullet}_{\m}(R)$ that takes
account of degree in $t$.  $L/V$ and $L/tV$ differ in this grading:  their socle generators 
have degree $-1$ and $0$, in $t$.}  All other reduced cohomology groups vanish. 
\een

The following table shows the data for the cases where $\simpHomol^{i - |G|-1}(\link_\Delta G; \scr{F}_G)$ 
is nonzero for faces $G \in \Delta-\vertex{0}$.  

\begin{table}[H]
        \centering

\begin{tabular}{|m{0.15\textwidth}|m{0.12\textwidth}|m{0.12\textwidth}|m{0.12\textwidth}|m{0.12\textwidth}|}
\hline 
 $\displaystyle G\ \in \Delta -v_{0}$ & $\displaystyle i=0$ & $\displaystyle i=1$ & $\displaystyle i=2$ & $\displaystyle i=3$ \\
\hline 
 2-face &  &  &  & $\displaystyle K$ \\
\hline 
 1-face with $\displaystyle v_{0} \notin \link\ G$ &  &  &  & $\displaystyle K$ \\
\hline 
 1-face with $\displaystyle v_{0} \in \link\ G$ &  &  &  & $\displaystyle L/V$ \\
\hline 
 vertex &  &  &  & $\displaystyle L/V$ \\
\hline 
 $\displaystyle \nl $ &  &  & $\displaystyle K$ if $\charac K = 2$ & $\displaystyle L/V$\\
 \hline
\end{tabular} 

\end{table}

\end{examp}

\subsection{An alternate approach to local cohomology of \texorpdfstring{\boldmath$t$}{t}-Stanley-Reisner rings}\label{altcohom}

In this subsection we first describe the local cohomology of \tSR rings using exact sequences. We then use cellular cohomology to resolve extension information that these sequences alone do not determine. The next subsection develops a spectral sequence for cellular cohomology. In Theorem~\ref{2seqs} we describe two exact sequences that 
carry a great deal of information about the 
 behavior of the local cohomology  of a \tSR ring by relating it to cohomology that is more fully understood. 
The relative cohomology of the geometric realizations of a pair of simplicial complexes plays a role. 

We then use these ideas to investigate some examples, including our running example of the real projective plane. Throughout, we use the following:

\begin{notation}\label{lcterm} Let $(V,tV)$ be a DVR with residue class field $K$ and fraction field $L$, and let $\Delta$ be a finite abstract simplicial complex with vertex set $\{v_0, \vect v n\}$. Let $\La:= \link_{\Delta}v_0$, and let $C$ denote the cone over $\Lambda$ with vertex $v_0$, whose facets are the unions of $\{v_0\}$ and facets of $\La$.  Note that $C$ is the closed star of $v_0$ in $\Delta$, and, in particular, is a subcomplex of $\Delta$. Let $\Delta' : = \Delta-v_0$ denote the largest simplicial complex contained in $\Delta$ with vertices in $\{\vect v n\}$.

As usual, let $x_0$ and $\ux: = \vect xn$ be indeterminates corresponding to $v_0$ and $\{\vect vn\}$, and let $R$ denote the \tSR ring $\VD$.  Let $\m := (t,\,\ux)$ be the homogeneous maximal ideal of $R$. Let $J$ be the $V$-submodule of $R$ generated by all monomials in $\ux$ whose support is in $\Delta\sm C$. These monomials are killed by $t$, so $J$ naturally carries a $K$-vector space structure.
\end{notation}  

\begin{facts}\label{facts-Del'-La}We make the following observations:
    \benn
       \item \label{C-La} $C \cup (\Delta') = \Delta$ and $C \cap (\Delta') = \La$.

       \item \label{VCVL} $V\ov{[C]} \cong V[\La].$
       \item \label{JAnn} $J = \Ann_R t$.

       \item \label{Jcoh} By abuse of notation, $J$ may be considered an ideal of $K[\Delta']$, where this ring is identified with $(V/tV)[\Delta']$ so that $t$ kills $J$. Therefore $H^{\bullet}_{\m}(J) \cong H^{\bullet}_{(\ux)}(J)$, where the local cohomology module on the left is calculated over $R$, and the one on the right may be calculated over either $R$ or over $K[\ux]$.
    \een
\end{facts}

The proof of the following result is completely straightforward and is omitted.

\begin{thm}\label{J} With notation as in (\ref{lcterm}):
\bena
\item There is a short exact sequence $0 \to J \to R \arwf{\pi} V[\La] \to 0$,
where $\pi$ is a surjective map of $V$-algebras. 
\item The map $\pi: R \surj V[\La]$ has a splitting
$\eta:V[\La] \inj R$ \emph{as a map of $V$-modules}.
Both $V[\La]$ and the image of $\eta$ are free $V$-modules with the monomials in $\ux$ that have support in $\La$ as a free basis. 
\item There is a short exact sequence $0 \to J \to K[\Delta'] \arwf{\rho} K[\La] \to 0$,
where $J$ is regarded as an ideal of $K[\Delta']$ 
and $\rho$ is a surjective homomorphism of 
$K$-algebras. 
\een
\end{thm}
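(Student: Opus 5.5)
The whole argument will run on the $V$-basis of $t$-monomials from Proposition~\ref{prop-tSRbasics}(a). Every nonzero graded component $[R]_{\umm}$, with $\umm\in\N^n$, is $V\monompos{m}$. It is free of rank one when $\supp\monompos{m}\in\La=\link_\Delta v_0$, and it is $K\monompos{m}$ when $\supp\monompos{m}\in\Delta'\sm\La$, that is, when $\supp\monompos{m}$ is a face of $\Delta'$ not in $C$. By Facts~\ref{facts-Del'-La}(\ref{C-La}), a face of $\Delta'$ lies in $C$ exactly when it lies in $\La$. Hence $J$ is precisely the direct sum of the $K$-valued components, and $R=\eta(V[\La])\oplus J$ as graded $V$-modules, where $\eta$ sends the monomial $\monompos{m}$ of $V[\La]$ to the same monomial of $R$.

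For (a), define $\pi:V[\ux]\to V[\La]$ to be the $V$-algebra map sending each $x_i$ to $x_i$. I will check that $\pi$ kills $\tSRi{\Delta}$. A generator $\tmonomF{F}$ with $F\notin\Delta$ and $v_0\in F$ becomes $t\,\monomF{F\sm v_0}$, and $F\sm v_0\notin\La$, so this is zero in $V[\La]$. A generator $\monomF{F}$ with $v_0\notin F$ has $F\notin\Delta$, hence $F\notin\La$, so it is also zero. Thus $\pi$ descends to $R$. It is surjective and sends each $\monompos{m}$ to the monomial with the same support: zero if the support is not in $\La$, and a basis element if it is. Its kernel is therefore spanned by the monomials with support in $\Delta'\sm\La$, which is $J$.

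Part (b) then follows immediately, since $\pi\circ\eta$ is the identity on the monomial basis of $V[\La]$.

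For (c), I will run the same argument over $K$. The kernel of $\rho:K[\Delta']\to K[\La]$ is spanned by the monomials with support in $\Delta'\sm\La$, and this is the ideal $J_\La$ of the Notation and Discussion in \S\ref{cohomJ}. It agrees with $J$ as a graded $K$-space. The action of $K[\Delta']$ on $J$ matches the action of $R$ via $R\to R/tR$, because $t$ kills $J$; this is Facts~\ref{facts-Del'-La}(\ref{JAnn}).

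The only point needing care is this identification of module structures, namely that a product of monomials with support outside $\La$ computed in $R$ agrees with the same product computed in $K[\Delta']$. Both products are governed by the same support test in $\Delta'$, so they agree.
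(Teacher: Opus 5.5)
Your proposal is correct. The paper omits this proof as ``completely straightforward,'' and your argument is exactly the routine verification it has in mind: you read off the kernels of $\pi$ and $\rho$, and the splitting $\eta$, from the graded $V$-module description in Proposition~\ref{prop-tSRbasics}(a), using $C\cap\Delta'=\Lambda$ to separate the $V$-valued components from the $K$-valued ones.
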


\begin{remark} The $V$-module splitting $\eta$ need not be a ring homomorphism. The product of two 
monomials with support in $\La$ may have support that is not in $\Lambda$ but is
in $\Delta$. This product will vanish in $V[\Lambda]$ 
but will be a nonzero element of $J$ in $R=V\ov{[\Delta]}$. \end{remark} 

\begin{rmk}\label{LCaddt}  For any Noetherian ring $R$, ideal $I\subseteq R$, element $t\in R$, and $R$-module $M$, there is a long exact sequence\footnote{One may prove this using the fact that if  $\uf:=\vect f k$ and $I$ is the radical of $(\uf)R$, the modified \Cech complex for $H^{\bullet}_{(\uf,t)}(M)$  is the mapping cone of $\cC^{\bullet} \to \cC^{\bullet}_t$, shifted by $[-1]$, where $\cC^{\bullet}$ is the modified \Cech complex for  $H^{\bullet}_{(\uf)}(M)$.}\ \  $\cdots \to H^{j-1}_I(M)_t \to H^j_{(I,t)}(M) \to H^j_I(M) \to H^j_I(M)_t \to \cdots$.
\end{rmk}

We can now state one of the main results of this subsection. The proof
is quite simple.

\begin{thm}\label{2seqs} Let notation be as in \ref{lcterm}. Then there are long exact cohomological sequences that preserve the $\Z^n$-grading as follows:
\benn
\item $\cdots\to H^{j-1}_{\m}(V[\La]) \to H^j_{(\ux)}(J) \to H^j_{\m}(\RV) \to H^j_{\m}(V[\La])  \to\cdots$, and 
\item $\cdots \to H^{j-1}_{(\ux)}(L[\La]) \to H^j_{\m}(V[\La]) \to H^j_{(\ux)}(V[\La]) 
\to H^j_{(\ux)}(L[\La])\to \cdots\,.$ 
\een
Note that in (2), we have natural identifications $H^{\bullet}_{(\ux)}(V[\La])_t \cong H^{\bullet}_{(\ux)}(V[\La]_t) \cong H^{\bullet}_{(\ux)}(L[\La])$.  

There are corresponding
exact sequences of graded components indexed by $\ua$ for all $\ua \in \Z^n$.
\end{thm}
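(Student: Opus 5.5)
Both sequences will come from short exact sequences of $\Z^n$-graded modules (or complexes), followed by long exact sequences in local cohomology. Every module and map involved is $\Z^n$-graded, and each \v Cech complex splits as a direct sum over $\ua\in\Z^n$. The graded-component statement will therefore follow automatically once the sequences are built at the level of graded \v Cech complexes.

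For (1), start from Theorem~\ref{J}(a), the short exact sequence $0\to J\to R\arwf{\pi} V[\La]\to 0$ of $\Z^n$-graded $R$-modules. Here $V[\La]$ is an $R$-module via $\pi$, with $t$ acting as $t\in V$; this is consistent with $V\ov{[C]}\cong V[\La]$ in Facts~\ref{facts-Del'-La}(\ref{VCVL}). Local cohomology $H^\bullet_\m$ is a $\delta$-functor, so the sequence gives a long exact sequence $\cdots\to H^{j-1}_\m(V[\La])\to H^j_\m(J)\to H^j_\m(R)\to H^j_\m(V[\La])\to\cdots$. Concretely, this is the cohomology sequence of the short exact sequence of \v Cech complexes on $t,\ux$. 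That sequence is exact because localization is exact, and it is graded because all the maps are.

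It then remains to replace $H^j_\m(J)$ by $H^j_{(\ux)}(J)$, which is Facts~\ref{facts-Del'-La}(\ref{Jcoh}). I would justify it by Remark~\ref{LCaddt} with $I=(\ux)$ and $M=J$. Since $t$ kills $J$, the localization $H^\bullet_{(\ux)}(J)_t$ vanishes, so the long exact sequence of that remark collapses to isomorphisms $H^j_{(\ux,t)}(J)\cong H^j_{(\ux)}(J)$. These isomorphisms are graded because the modified \v Cech complexes are.

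For (2), apply Remark~\ref{LCaddt} directly with $I=(\ux)$, $M=V[\La]$ and the element $t$. This gives $\cdots\to H^{j-1}_{(\ux)}(V[\La])_t\to H^j_\m(V[\La])\to H^j_{(\ux)}(V[\La])\to H^j_{(\ux)}(V[\La])_t\to\cdots$. For the natural identifications, localization at $t$ is exact and commutes with forming the \v Cech complex on $\ux$, so $H^\bullet_{(\ux)}(V[\La])_t\cong H^\bullet_{(\ux)}(V[\La]_t)$. Moreover $V[\La]_t=V_t\otimes_V V[\La]=L[\La]$, since $V_t=L$ for a DVR. All of these maps preserve the $\Z^n$-grading: $t$ has degree $\Zo$, and localizing at it does not change the multidegree.

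The only step needing care is the mapping-cone description behind Remark~\ref{LCaddt}, and in particular checking that it is compatible with the grading. That compatibility holds because the mapping cone of $\cC^\bullet\to\cC^\bullet_t$ is formed componentwise in each multidegree. I would cite the footnote to Remark~\ref{LCaddt} for this rather than reprove it.
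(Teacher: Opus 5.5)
Your proposal is correct and follows the paper's own argument. Part (1) is the local cohomology sequence of Theorem~\ref{J}(a), with $H^j_\m(J)$ replaced by $H^j_{(\ux)}(J)$ because $t$ kills $J$, and part (2) is Remark~\ref{LCaddt} applied to $V[\La]$ with $I=(\ux)$. Your derivation of that replacement from Remark~\ref{LCaddt}, using the vanishing of the $t$-localized terms, is just a more explicit form of the paper's remark that $t$ may be dropped from the support ideal when it annihilates the module.
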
  
\begin{proof} Notice again that when $t$ kills a ring or module, it may
be replaced by 0 and so omitted from the generators of the 
support ideal of the local cohomology. Also, recall Fact \ref{facts-Del'-La}~\ref{Jcoh}. Hence (1) is the long exact sequence for local cohomology with support in $\m$ arising from  the short exact sequence in Theorem~\ref{J}, part (a). The sequence in (2) follows from the standard result in Remark~\ref{LCaddt}. \end{proof} 

In the sequel we shall see in examples how these sequences along with Theorem~\ref{cohJ} provide
a great deal of information about the local cohomology modules $H^{\bullet}_{\m}(V\ov{[\Delta]})$
of $t$-Stanley-Reisner rings. 

\subsubsection{The case of compact manifolds with boundary}\label{subsubsec-manifold} 
We continue the discussion of the results of \S\ref{altcohom}
in an important special case that affords a number of simplifications and includes the real projective plane as an example.  

We focus here on the case where  $\Delta'$ and $\Lambda$ as in Notation~\ref{lcterm}
satisfy a combinatorial condition weaker than being compact manifolds with boundary, 
introduced in \S\ref{homby}.  For background in piecewise-linear topology, 
see \cite{RoSa72, Mun84b}.

\subsubsection{Homology manifolds with boundary}\label{homby} 

To keep our treatment purely combinatorial and as general as possible, we work with
the class of examples such that $\Delta'$ and $\Lambda$  are {\it homology manifolds 
with boundary} in the sense of \cite[p.\,655]{KlNo16} over an appropriate base, although 
we generalize by allowing the base ring to be
arbitrary instead of restricting it to be a field or $\Z$. 

\begin{defn}
Let $\Sigma$ be a finite
abstract simplicial complex of dimension $n$ and let $A$ be a nonzero base ring. We say that 
$\Sigma$ is an $A$-{\it homology sphere} if for every face $F \in \Sigma$, including 
the empty face, $\link_{\Sigma} F$ has the same reduced homology as a sphere of 
dimension $n-|F|$. We say that  $\Sigma$ is an $A$-{\it homology manifold} if
for every \emph{nonempty} face $F \in \Sigma$, $\link_{\Sigma}F$  is an $A$-homology sphere of dimension $n-|F|$.

Similarly, we define $\Sigma$ of dimension $n$ to be a {\it homology} $A${\it -manifold 
with boundary} if the following two conditions\footnote{These conditions coincide with
the topological condition of having the geometric realization be a homology manifold 
with boundary. In the early topology literature, following Wilder \cite{Wil79}, who introduced 
homology manifolds, the term {\it generalized} manifold is used instead of {\it homology 
manifold}.  For several  decades, in dealing with homology manifolds with boundary, writers 
assumed both conditions (1) and (2) (see, for example,
\cite{Ray60}) in the definition. Both 
conditions are assumed as well in \cite{KlNo16}. However, even in the more general topological 
case, one has, with some restrictions, that $(1) \imp (2)$: see \cite{Mit90}, particularly the 
discussion at the top of p.~510.} hold:

\benn
\item For every \emp{nonempty} face $F \in \Sigma$, the reduced homology over $A$ of 
$\link_\Sigma F$ is the same as the reduced homology over $A$ either of a sphere or 
a ball of dimension $n-|F|$.  

\item The boundary $\partial\Sigma$ either equals $\{\nl\}$ or is an $A$-homology manifold of dimension $n-1$, where \cred{$\partial \Sigma$ }\blue{$\partial\Sigma$ } \comm\gre{same?} \fi is defined as the set of faces\footnote{It is
easy to see that $\partial \Sigma$ is always a subcomplex of $\Sigma$.}
$F \in \Sigma$  such that the $\link_{\Sigma}F$ has the same reduced homology over 
$A$ as a ball of dimension $n - |F|$, together with the empty face.   
\een
\end{defn}

\begin{rmk} The conditions on reduced $A$-homology modules in the definitions above are 
evidently equivalent to the same conditions on the correspondingly indexed reduced 
$A$-cohomology modules. 
\end{rmk}

\begin{rmk}\label{uc} By the universal coefficient theorems, if one has a finite simplicial 
$A$-homology manifold with  boundary, it is also a $B$-homology manifold with boundary for 
every nonzero $A$-algebra $B$. This is a case where the reduced chain and cochain complexes of all relevant links are free of finite rank over $A$, and so is their (co)homology, so that the calculation of (co)homology commutes 
with every base change. 

Moreover, if $\Sigma$ is a triangulation of a topological manifold (with boundary), then 
$\Sigma$ is an $A$-homology manifold (with boundary) over nonzero base ring $A$.
\end{rmk}

Note that $\La \inc \Delta'$ completely determines
$\Delta$:  with $C$ denoting the cone over $\La$ with the vertex $v_0$, we have that 
$\Delta = C \cup \Delta'$ while $C \cap \Delta' = \Lambda$.  See Fact~\ref{facts-Del'-La}~\ref{C-La}. If $\Delta'$ is a $d$-dimensional $A$-homology manifold with boundary $\Lambda$, it follows from the Mayer--Vietoris sequences of the links that $\Delta$ is an $A$-homology manifold if and only if $\Lambda$ is an $A$-homology sphere of dimension $d-1$.

Our running example, in which $\Delta'$ is a triangulation of a M\"obius strip and $\Lambda$ gives a triangulation of its boundary, $S^1$, is a case  where $\Delta'$ is a manifold with boundary $\Lambda$, and so both are $A$-homology manifolds for every coefficient ring $A$.

We next want to examine the information about the components of $H^j_{\m}(R)$ that one obtains 
in the special case where $\Lambda$ and $\Delta'$, are homology manifolds with boundary over 
$V$ (one also then has that hypothesis over $L$ and $K$, by Remark~\ref{uc}). 
We shall then specialize further: first to the case where the boundary is a sphere and then to our running example of a triangulation of the real projective plane.  

In the quotient notation below, when $\La=\{\nl\}$ we interpret $\Delta'/\La$ as $|\Delta'|$ with a disjoint basepoint. Here $\La$ is never the void complex $\Nl$. This convention makes the relative-to-quotient cohomology identifications valid also in dimension zero.

We begin by considering what happens with each of the exact sequences of Theorem~\ref{2seqs}
when we consider a multi-graded component. The first long exact sequence of Theorem~\ref{2seqs} is:
\[
  \cdots\to [H^{j-1}_{\m}(V[\La])]_{\ua} \to [H^j_{(\ux)}(J)]_{\ua} \to [H^j_{\m}(R)]_{\ua} \to [H^j_{\m}(V[\La])]_{\ua}  \to\cdots
\]

We can get very precise information about the $H^j_{(\ux)}(J)$ terms from the homology manifold with boundary assumptions, via Thm. \ref{cohJ}. In fact, we have:

\begin{lemma}\label{HJ} 
Assume that $\Delta'$ is a $K$-homology manifold of dimension $d$ with vertices $\{\vect vn\}$
with boundary $\La$, which is a nonempty 
$K$-homology manifold of dimension $d-1$. 
As usual, let $J$ be the ideal of $K[\Delta']$ spanned by the monomials with support in 
$\Delta' - \La$.

 Assume that $\ua \in \Z^n$ is nonpositive and that $\Ga$
is the set of vertices indexed by integers corresponding to strictly negative coordinates 
of $\ua$. If $\ua \not= \Zo$,  then  $[H^j_{(\ux)}(J)]_{\ua}$ vanishes unless $j = d+1$ and 
$\Ga$ is a face of  $\Delta'$, in which case it is isomorphic with $K$.  

If $\ua = \Zo$, we have $[H^j_{(\ux)}(J)]_{\Zo} \cong \tH^{j-1}(\Delta', \, \Lambda;\, K) \cong \tH^{j-1}(\Delta'/\La; \, K)$.
\end{lemma}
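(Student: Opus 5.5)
We will apply Theorem~\ref{cohJ} with $A=K$, the ambient complex $\Delta'$, and subcomplex $\La$. For nonpositive $\ua$ this gives
\[
[H^j_{(\ux)}(J)]_{\ua}\cong \tH^{j-|\Ga|-1}(\link_{\Delta'}\Ga,\,\link_{\La}\Ga;\,K).
\]
When $\Ga\notin\Delta'$, both links are void and the module is zero, which is consistent with the statement. When $\ua=\Zo$ we have $\Ga=\nl$, so the links are $\Delta'$ and $\La$ and the first isomorphism is immediate. The identification with $\tH^{j-1}(\Delta'/\La;K)$ follows from the good-pair discussion in Discussion~\ref{rel} and Remark~\ref{relhom}. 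Since $\La$ is nonempty, it is not the void complex. The convention for $\La=\{\nl\}$, namely a disjoint basepoint, handles the degenerate case.

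Now suppose $\Ga=G$ is a nonempty face of $\Delta'$. Put $\Sigma=\link_{\Delta'}G$ and $\Sigma_0=\link_\La G$. We split into two cases according to whether $G\in\La$.

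\textbf{Case $G\notin\La$.} Here $\Sigma_0=\Nl$, so the relative cohomology is just $\tH^\bullet(\Sigma;K)$. By condition (1) of the definition, $\Sigma$ has the $K$-homology of a sphere or of a ball of dimension $d-|G|$. A nonboundary face has sphere links, so the only nonzero group is $\tH^{d-|G|}(\Sigma;K)\cong K$. This gives $j-|G|-1=d-|G|$, that is, $j=d+1$.

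\textbf{Case $G\in\La$.} Then $G\in\partial\Delta'$, so $\Sigma$ has the homology of a $(d-|G|)$-ball and $\tH^\bullet(\Sigma;K)=0$. The long exact sequence of the pair then gives
\[
\tH^{i}(\Sigma,\Sigma_0;K)\cong \tH^{i-1}(\Sigma_0;K).
\]
Since $\La$ is a $K$-homology manifold of dimension $d-1$ and $G$ is a nonempty face of $\La$, the link $\Sigma_0$ is a homology sphere of dimension $d-1-|G|$. Hence the relative group is $K$ exactly when $i-1=d-1-|G|$, so $i=d-|G|$ and again $j=d+1$. It is zero otherwise.

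We take the Mayer--Vietoris/boundary structure as given. The main point needing care is that $\link_\La G=\link_{\partial\Delta'}G$ is indeed the boundary-link relevant here. This holds because $\La=\partial\Delta'$ by hypothesis, so $\link_\La G$ is the link within $\La$ and no further argument about boundaries of links is needed.

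The remaining delicate point is the degenerate case $|G|=d$, where the links are $\{\nl\}$ or points. The conventions of Remark~\ref{empty} must be checked there. For example, with $\Sigma$ a point and $\Sigma_0=\{\nl\}$, the relative complex gives $\tH^{-1}(\Sigma_0)\to\tH^0$ correctly shifted. We expect this bookkeeping in the lowest dimensions to be the main obstacle, and we will verify it directly from the cochain complexes rather than from topology.
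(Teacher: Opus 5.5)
Your proposal is correct and follows the paper's own proof essentially step for step. Both apply Theorem~\ref{cohJ} over $K$ and then split into four cases: $G_{\ua}\notin\Delta'$; $G_{\ua}\in\Delta'\setminus\Lambda$, where the link is a sphere; $\nl\neq G_{\ua}\in\Lambda$, where the link is a ball, so the pair sequence shifts the sphere cohomology of $\link_\Lambda G_{\ua}$ up by one; and $\ua=\Zo$. The low-dimensional bookkeeping you flag as the main obstacle is not a real difficulty, because $0\to\tC^\bullet(\Sigma,\Sigma_0;K)\to\tC^\bullet(\Sigma;K)\to\tC^\bullet(\Sigma_0;K)\to 0$ is exact for augmented cochains whenever $\Sigma_0\subseteq\Sigma$, including the case $\Sigma_0=\{\nl\}$.
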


\begin{proof}
By Theorem~\ref{cohJ}, we have $[H^j_{(\ux)}(J)]_{\ua} \cong \tH^{j-|G_{\ua}|-1}(\link_{\Delta'}G_{\ua},\,
\link_{\La} G_{\ua}; \, K)$,
(since we are considering $J$ as an ideal in a \SR-ring over $K$).
For the remainder of this argument we omit $K$ from our simplicial cohomology notation.  
There are four cases:
\benn
    \item If $\Ga$ is not a face of $\Delta'$, then $[H^j_{(\ux)}(J)]_{\ua}=0$ for all $j$.  

    \item If $\Ga$ is a face of $\Delta'$ but not of $\Lambda$, we simply have $[H^j_{(\ux)}(J)]_{\ua} \cong \tH^{j - |\Ga| - 1}(\link_{\Delta'}\Ga)$. Since $\Ga$ is not in the boundary,  the link is a $K$-homology sphere of dimension $d-|\Ga|$.  The reduced cohomology is isomorphic to $K$ when $j = d+1$ and vanishes at all other indices.

    \item If $\Ga$ is a nonempty face of $\Lambda$, then  its link in $\Delta'$ is a homology ball of dimension $d-|\Ga|$, while its link in $\Lambda$ is a homology sphere of dimension $d-|\Ga|-1$.  Since the reduced cohomology of the ball is 0 in all degrees, in the long exact sequence for  cohomology associated with the pair $(\link_{\Delta'}\Ga,\, \link_{\La} \Ga)$  (see \cite[\S3.1]{Hat02}) every third term vanishes. Hence, for every integer $k$ we have that $\tH^{k+1}(\link_{\Delta'}\Ga,\, \link_{\La} \Ga) \cong \tH^k(\link_{\La}\Ga)$. Since $\link_{\La}\Ga$ is a $K$-homology sphere of dimension $d-1-|G_a|$, the latter term is 0 except when $k = d-1-|\Ga|$.  Thus, the only value of $j$ such that $\tH^{j - |\Ga| - 1}(\link_{\Delta'}\Ga,\, \link_{\La}\Ga )$ is not zero occurs when $j -|\Ga|-1 = d-1-|\Ga|+1$, i.e., when $j = d+1$, in which case it is isomorphic to $K$.

    \item If $\Ga = \nl$, then $\ua = \Zo$, and the relevant links of $\Ga$ are $\Delta'$ and $\Lambda$. The stated result is immediate from Theorem~\ref{cohJ}.
    \een
 \end{proof}

The following result follows easily from known results in the literature. The \CM property for $K[\La]$ when  $\La$ is a sphere was used in Richard Stanley's breakthrough proof of the
Upper Bound Conjecture. We give a proof, since it is very short.

\begin{lemma}\label{manifold} Let $\Lambda$ be a simplicial complex with vertices $\{\vect vn\}$ 
that is an $A$-homology manifold of dimension $d-1$. For $\ua\in\Z^n$, let $\Ga$ be its negative support. If any coordinate of $\ua$ is positive, then $[H^j_{(\ux)}(A[\Lambda])]_{\ua}=0$ for every $j$.
If $\ua\leq\Zo$ and $\Ga\neq\nl$, then $[H^j_{(\ux)}(A[\Lambda])]_{\ua}=0$ unless $\Ga\in\La$ and $j=d$, in which case it is isomorphic to $A$.

For $\ua=\Zo$, we have$[H^j_{(\ux)}(A[\Lambda])]_{\Zo}\cong\tH^{j-1}(\La;A)$.

\end{lemma}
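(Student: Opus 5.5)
The plan is to apply the general Hochster formula of Theorem~\ref{thm-og-hochsform-localcohomver} with coefficient ring $A$. It gives, for every $\ua\in\Z^n$,
\[
 [H^j_{(\ux)}(A[\La])]_{\ua}\cong
 \begin{cases}
 \tH^{j-|\Ga|-1}(\link_\La \Ga;A), & H_{\ua}=\nl,\\
 0, & \text{otherwise.}
 \end{cases}
\]
The first assertion, vanishing when some coordinate of $\ua$ is positive, is the second case of this formula. So from now on I assume $\ua\le\Zo$, so that $H_\ua=\nl$ and the component is $\tH^{j-|\Ga|-1}(\link_\La\Ga;A)$.

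Next I split according to $\Ga$.

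\begin{itemize}
\item If $\Ga=\nl$, i.e.\ $\ua=\Zo$, then $\link_\La\nl=\La$, and the formula reads $[H^j_{(\ux)}(A[\La])]_{\Zo}\cong\tH^{j-1}(\La;A)$, which is the last assertion.
\item If $\Ga\neq\nl$ and $\Ga\notin\La$, then $\link_\La\Ga$ is the void complex $\Nl$. Its reduced cohomology vanishes in all degrees by Remark~\ref{empty}, so every component is zero.
\item If $\Ga$ is a nonempty face of $\La$, I use the definition of an $A$-homology manifold of dimension $d-1$. It says $\link_\La\Ga$ is an $A$-homology sphere of dimension $(d-1)-|\Ga|$, so its reduced cohomology is $A$ in degree $d-1-|\Ga|$ and zero elsewhere.
\end{itemize}

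The definition of a homology sphere is stated via reduced homology. The remark after it notes that the same conditions hold for the correspondingly indexed cohomology, and I will rely on that. If a direct justification is wanted: the reduced chain complex is a finite complex of finite free $A$-modules whose homology is free and concentrated in one degree. Such a complex is homotopy equivalent to its homology, so dualizing gives cohomology $A$ in that same degree.

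Setting $j-|\Ga|-1=d-1-|\Ga|$ gives $j=d$. So the component is $A$ exactly when $j=d$ and $\Ga\in\La$, and zero otherwise, as claimed.

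The only potential subtlety is that homology-versus-cohomology identification in the third case, including the edge case where $\Ga$ is a facet, so that $\link_\La\Ga=\{\nl\}$ with cohomology $A$ in degree $-1$, consistent with $j=d$. No other step requires more than bookkeeping of the degree shift.
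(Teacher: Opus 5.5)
Your proposal is correct and follows essentially the same route as the paper. You apply the general Hochster formula over $A$, discard the components with a positive coordinate and those whose negative support is a non-face, and then read off the cohomology of $\link_\La G_{\ua}$ as an $A$-homology sphere of dimension $d-1-|G_{\ua}|$, which forces $j=d$. Your splitting argument for passing from homology to cohomology is valid: a bounded complex of finite free modules with free homology is homotopy equivalent to its homology. It simply spells out what the paper takes from its remark following the definition of homology manifolds.
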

\begin{proof} By Theorem~\ref{thm-og-hochsform-localcohomver}, we need only consider the
case where $\ua$ is nonpositive and $\Ga \in \La$.  Moreover, since $\link_{\La}\Ga$ is an $A$-homology
sphere of dimension $d-1-|\Ga|$ when $\Ga$ is nonempty, it follows from Theorem~\ref{thm-og-hochsform-localcohomver} that
$[H^j_{(\ux)}(A[\Lambda])]_{\ua} \cong \tH^{j-1-|G_{\ua}|}(\link_{\La} \Ga;\,A) = 0$ unless $j = d$, in which case it is isomorphic to $A$.

When $\ua\leq\Zo$ and $G_{\ua}=\nl$, we have 
$[H^j_{(\ux)}(A[\Lambda])]_{\Zo}\cong\tH^{j-1}(\La;A)$.
\end{proof}

Consequently:  

\begin{cor}\label{manifbdry} With notation as in Lemma~\ref{manifold}, if $\La$ is 
a $V$-homology manifold of dimension $d-1$, then for all $j \in \Z$ we have a long exact sequence 
\[\cdots \to \tH^{j-2}(\La;\, L) \to [H^j_{\m}(V[\La])]_{\Zo} \to \tH^{j-1}(\La;\, V) \to \cdots\,.\]
If $\ua \not= \Zo$ is nonpositive and $G_{\ua} \in \La$, then for all integers $j$ except $j= d+1$,  $[H^j_{\m}(V[\La])]_{\ua} = 0$, while 
$[H_{\m}^{d+1}(V[\Lambda])]_{\ua} \cong L/V$.
\end{cor}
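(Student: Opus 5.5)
The plan is to specialize the second long exact sequence of Theorem~\ref{2seqs}, restricted to a fixed multidegree $\ua$,
\[
\cdots \to [H^{j-1}_{(\ux)}(L[\La])]_{\ua} \to [H^j_{\m}(V[\La])]_{\ua} \to [H^j_{(\ux)}(V[\La])]_{\ua} \to [H^j_{(\ux)}(L[\La])]_{\ua} \to \cdots,
\]
and then feed in Lemma~\ref{manifold} with $A=V$ and with $A=L$. By Remark~\ref{uc}, a $V$-homology manifold is also an $L$-homology manifold of the same dimension, so the lemma applies to both rings.

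First take $\ua=\Zo$. Lemma~\ref{manifold} gives $[H^j_{(\ux)}(V[\La])]_{\Zo}\cong\tH^{j-1}(\La;V)$ and $[H^{j-1}_{(\ux)}(L[\La])]_{\Zo}\cong\tH^{j-2}(\La;L)$. Substituting these into the sequence gives the displayed long exact sequence directly. The next map, $\tH^{j-1}(\La;V)\to\tH^{j-1}(\La;L)$, is the coefficient map induced by localization at $t$. This follows from the identification $H^\bullet_{(\ux)}(V[\La])_t\cong H^\bullet_{(\ux)}(L[\La])$ noted in Theorem~\ref{2seqs}, together with the naturality of the isomorphism in Theorem~\ref{thm-og-hochsform-localcohomver} with respect to the coefficient ring.

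Next take $\ua\le\Zo$ with $\ua\neq\Zo$ and $\Ga\in\La$, so $\Ga\neq\nl$. By Lemma~\ref{manifold}, both $[H^j_{(\ux)}(V[\La])]_{\ua}$ and $[H^j_{(\ux)}(L[\La])]_{\ua}$ vanish for $j\neq d$. For $j=d$ they equal $V$ and $L$, respectively. The sequence then collapses to
\[
0\to[H^d_\m(V[\La])]_{\ua}\to V\to L\to[H^{d+1}_\m(V[\La])]_{\ua}\to 0,
\]
with every other $[H^j_\m(V[\La])]_{\ua}$ equal to zero. So the remaining task is to identify $V\to L$ as the natural inclusion (up to a unit). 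Granting that, the kernel $[H^d_\m(V[\La])]_{\ua}$ is $0$ and the cokernel is $L/V$, which proves the claim.

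The main obstacle is justifying that this middle map is the localization $V\hookrightarrow L$ and not some other map. I would argue as follows. The degree-$\ua$ graded piece of the \v Cech complex of $L[\La]$ is the degree-$\ua$ piece of the \v Cech complex of $V[\La]$ tensored with $L$, and the map $H^j_{(\ux)}(V[\La])\to H^j_{(\ux)}(V[\La])_t$ is localization. That graded piece is a finite complex of free $V$-modules. Its cohomology in degree $d$ is $V$, which is free, so by flatness of $L$ the map on cohomology is $V\to V\otimes_V L=L$, the canonical inclusion. The vanishing statement for $j\neq d+1$ then follows immediately from the collapsed sequence.
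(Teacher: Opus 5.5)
Your proposal is correct and takes the same approach as the paper. You specialize sequence (2) of Theorem~\ref{2seqs} to degree $\ua$, insert Lemma~\ref{manifold} over $V$ and over $L$, and read off $[H^d_\m(V[\La])]_\ua$ and $[H^{d+1}_\m(V[\La])]_\ua$ as the kernel and cokernel of $V\hookrightarrow L$. Your check that this middle map is the localization map, and hence the inclusion, supplies a step the paper states without justification.
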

\begin{proof} The first statement is immediate from the exact sequence (2) in 
Theorem~\ref{2seqs} and the results of Lemma~\ref{manifold}. \smallskip

Now assume that $\ua \not= \Zo$ is nonpositive and that $G_{\ua} \in \La$. 
Then Lemma~\ref{manifold} and the exact sequence (2) of Theorem~\ref{2seqs} show that
$[H_{\m}^j(V[\Lambda])]_{\ua} =0$ when $j$ is neither $d$ nor $d+1$, while 
$[H_{\m}^d(V[\Lambda])]_{\ua}$
and  $[H_{\m}^{d+1}(V[\Lambda])]_{\ua}$ are isomorphic to the kernel and 
cokernel, \resp, of the inclusion map $V \inj L$.   
\end{proof}

\begin{examp}\label{LaSphere} Assume the hypothesis is the same as in Corollary~\ref{manifbdry}.  Assume,
in addition, that $\La$ is a $V$-homology sphere of dimension $d-1$.    
It is straightforward to show that in this case:
\[[H^j_{\mathfrak m}(V[\Lambda])]_{\ua}
\cong
\begin{cases}
L/V,
 &j=d+1,\quad \ua\leq \Zo,\quad G_{\ua}\in\Lambda,\\[1mm]
0,&\text{otherwise}.
\end{cases}\]

Here $\ua=\Zo$ is included, since $G_{\Zo}=\nl\in\Lambda$.
Evidently,  $V[\Lambda]$ is Cohen?Macaulay of dimension $d+1$. 
\end{examp}
\medskip

From Lemma~\ref{HJ}, Lemma~\ref{manifold}, Corollary~\ref{manifbdry}, and 
the exact sequence (1) in Theorem~\ref{2seqs} we have at once:

\begin{thm}\label{toptolc} Let $\Delta'$ of dimension $d$ be a $V$-homology manifold with boundary $\La\neq\Nl$,
which is a $V$-homology manifold of dimension $d-1$. Let $C$ be the simplicial cone over $\Lambda$ with a new vertex $v_0$, and set $\Delta:=C\cup\Delta'$. Let $R$ be the $t$-Stanley-Reisner ring $V\ov{[\Delta]}$. Then
$[H^j_{\m}(R)]_{\ua}=0$  unless all entries of $\ua$ are nonpositive and $G_{\ua} \in \Delta'$,
in which case:

\bena
\item If $G_{\ua} \in \Delta'$ is nonempty, then 
$[H^j_{\m}(R)]_{\ua} = 0$ if $j \not= d+1$. If $j=d+1$ then for $G_{\ua}\in\Delta'\sm\Lambda$ we have $[H^{d+1}_{\m}(R)]_{\ua}\cong K$,
while for $G_{\ua}\in\La$ there is an exact sequence $0 \to K \to [H_{\m}^{d+1}(R)]_{\ua} \to L/V \to 0$. 

\item If $\ua = \Zo$ then we have an exact sequence
\[\cdots \to \tH^{j-1}(\Delta'/\La; \, K) \to [H^j_{\m}(R)]_{\Zo} \to 
H^j_{\m}(V[\La])_{\Zo} \to \tH^j(\Delta'/\La; \, K)\to\cdots \] 
Thus, for any $j \in \Z$ such that $\tH^{j-1}(\Delta'/\La; \, K) = \tH^j(\Delta'/\La; \, K)
= 0$, we have $[H^j_{\m}(R)]_{\Zo} \cong H^j_{\m}(V[\La])_{\Zo}$.

In particular, if $\tH^{j-2}(\La;K)=\tH^{j-1}(\Delta;K)=0$, then $[H^j_{\m}(R)]_{\Zo}=0$. Indeed, the coefficient sequence $0\to V\arwf{t}V\to K\to0$ shows that multiplication by $t$ is surjective on $\tH^{j-2}(\La;V)$ and injective on $\tH^{j-1}(\La;V)$. The former module is zero by finite generation and Nakayama's lemma; the latter is torsion-free because $V$ is a DVR. Hence $[H^j_{\m}(V[\La])]_{\Zo}=0$ by Theorem~\ref{2seqs}(2). Also $\tH^{j-1}(\Delta'/\La;K)\cong\tH^{j-1}(\Delta;K)=0$, because the cone $C$ is contractible. 
\een
\end{thm}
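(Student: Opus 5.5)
Theorem~\ref{toptolc} follows by feeding Lemma~\ref{HJ}, Lemma~\ref{manifold} and Corollary~\ref{manifbdry} into the $\ua$-graded component of the long exact sequence (1) of Theorem~\ref{2seqs}:
\[
\cdots\to [H^{j-1}_{\m}(V[\La])]_{\ua}\to [H^j_{(\ux)}(J)]_{\ua}\to [H^j_{\m}(R)]_{\ua}\to [H^j_{\m}(V[\La])]_{\ua}\to\cdots.
\]
First we dispose of the vanishing statements. If some coordinate of $\ua$ is positive, then Theorem~\ref{cohJ} kills the $J$-terms, and Lemma~\ref{manifold} (over $V$ and $L$) together with sequence (2) of Theorem~\ref{2seqs} kills the $V[\La]$-terms. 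Hence $[H^j_\m(R)]_\ua=0$. If $\ua\le\Zo$ but $G_\ua\notin\Delta'$, then $G_\ua\notin\La$ as well, so both outer terms vanish by the same results.

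For (a), let $G_\ua\in\Delta'$ be nonempty, so $\ua\ne\Zo$. Since $\Delta'$ is a $V$-homology manifold with boundary, it is one over $K$ by Remark~\ref{uc}. Lemma~\ref{HJ} then gives $[H^j_{(\ux)}(J)]_\ua=0$ for $j\ne d+1$ and $\cong K$ for $j=d+1$.

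\emph{Case $G_\ua\in\Delta'\sm\La$.} Lemma~\ref{manifold} and sequence (2) give $[H^\bullet_\m(V[\La])]_\ua=0$. So $[H^j_\m(R)]_\ua\cong[H^j_{(\ux)}(J)]_\ua$, which is $K$ in degree $d+1$ and zero otherwise.

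\emph{Case $G_\ua\in\La$.} Corollary~\ref{manifbdry} says $[H^\bullet_\m(V[\La])]_\ua$ is concentrated in degree $d+1$, where it is $L/V$. The long exact sequence then collapses to $[H^j_\m(R)]_\ua=0$ for $j\ne d+1$, together with
\[
0=[H^{d}_\m(V[\La])]_\ua\to K\to [H^{d+1}_\m(R)]_\ua\to L/V\to [H^{d+2}_{(\ux)}(J)]_\ua=0.
\]
This is the stated extension.

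For (b), take $\ua=\Zo$. Lemma~\ref{HJ} identifies $[H^j_{(\ux)}(J)]_\Zo\cong\tH^{j-1}(\Delta'/\La;K)$, and substituting this into sequence (1) gives the displayed long exact sequence. The isomorphism claim when both flanking terms vanish is then immediate.

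For the final clause, assume $\tH^{j-2}(\La;K)=\tH^{j-1}(\Delta;K)=0$. Apply the coefficient long exact sequence of $0\to V\xrightarrow{t}V\to K\to0$ to $\tH^\bullet(\La;-)$. The segment $\tH^{j-2}(\La;V)\xrightarrow{t}\tH^{j-2}(\La;V)\to\tH^{j-2}(\La;K)=0$ shows $t$ is surjective on $\tH^{j-2}(\La;V)$, so this finitely generated module is zero by Nakayama. The connecting map lands in $\tH^{j-1}(\La;V)$ from $\tH^{j-2}(\La;K)=0$, so $t$ is injective on $\tH^{j-1}(\La;V)$, i.e.\ it is torsion-free.

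Now use the first sequence of Corollary~\ref{manifbdry}, where the connecting map is the coefficient map $\tH^{j-1}(\La;V)\to\tH^{j-1}(\La;L)$. This exhibits $[H^j_\m(V[\La])]_\Zo$ as an extension of $\ker(\tH^{j-1}(\La;V)\to\tH^{j-1}(\La;L))$ by $\Coker(\tH^{j-2}(\La;V)\to\tH^{j-2}(\La;L))$. The kernel is zero because $\tH^{j-1}(\La;V)$ is torsion-free. The cokernel is $\tH^{j-2}(\La;L)$, which vanishes because it equals $L\otimes_V\tH^{j-2}(\La;V)=0$ by flatness.

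Finally, $\tH^{j-1}(\Delta'/\La;K)\cong\tH^{j-1}(\Delta,C;K)$ by excision, using $\Delta=C\cup\Delta'$ and $C\cap\Delta'=\La$ (Fact~\ref{facts-Del'-La}). This is $\tH^{j-1}(\Delta;K)=0$ since the cone $C$ is acyclic. The long exact sequence in (b) then forces $[H^j_\m(R)]_\Zo=0$.

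The main obstacle is this $\ua=\Zo$ bookkeeping. One must identify the maps in sequence (2) with coefficient maps, and handle the degenerate case $\La=\{\nl\}$ via the disjoint-basepoint convention.
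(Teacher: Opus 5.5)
Your proposal is correct and matches the paper's argument. The paper likewise obtains (a) and (b) by inserting Lemma~\ref{HJ}, Lemma~\ref{manifold} and Corollary~\ref{manifbdry} into the graded pieces of sequence (1) of Theorem~\ref{2seqs}, and proves the final clause the same way: the coefficient sequence for $t$, Nakayama, torsion-freeness, Theorem~\ref{2seqs}(2), and excision together with contractibility of the cone $C$. You also make explicit a point the paper leaves implicit, namely that the maps $[H^{i}_{(\ux)}(V[\La])]_{\Zo}\to[H^{i}_{(\ux)}(L[\La])]_{\Zo}$ are the coefficient (localization) maps, so their kernels are the $t$-torsion.
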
 

\begin{remark} Theorem~\ref{toptolc} goes a long way towards understanding the components of the local
cohomology of $t$-Stanley-Reisner rings in terms of topological invariants.  But it does not provide 
complete information because of the difficulty in understanding the homomorphisms in the
long exact sequences, which illustrates the advantage of using cellular cohomology. In 
particular, using the cellular sheaf cohomology approach, one can say more about whether a  
component is better described as $L/tV$ or $L/V$, which are isomorphic as $V$-modules.  
There is further discussion of this point in Example~\ref{LaSph}.

On the other hand, Theorem~\ref{toptolc} may be easier to use in certain cases in which one 
knows that a great many of the relevant simplicial cohomology modules vanish for topological 
reasons, especially when one knows that a great many of the topological cohomology modules vanish.\end{remark}

From Theorem~\ref{toptolc} we have at once:

\begin{cor} With the same hypotheses and notation as in Theorem~\ref{toptolc}, if $k \in \N$
and $\tH^{j-1}(\La; \, K) =$ $\tH^j(\Delta; \, K) = 0$ for $j < k-1$, the depth of $R$ on 
$\m$ is at least $k$. \end{cor}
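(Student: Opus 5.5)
The plan is to use the standard characterization $\depth_\m R = \min\{j : H^j_\m(R)\neq 0\}$. It therefore suffices to show that $[H^j_\m(R)]_\ua=0$ for every $\ua\in\Z^n$ and every $j<k$. We check this multidegree by multidegree, using Theorem~\ref{toptolc}.

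By that theorem, the component vanishes unless $\ua\leq\Zo$ and $G_\ua\in\Delta'$. For nonempty $G_\ua$, part (a) shows the component can be nonzero only when $j=d+1$. So the components with $\ua\neq\Zo$ vanish for $j<k$ as long as $k\leq d+1$. Here we may assume $k\leq\dim R=d+1$: depth is bounded by dimension, and the hypotheses for larger $k$ imply those for $k=d+1$. I will note this explicitly, or handle the case $k>d+1$ by observing that the hypotheses become vacuous beyond the top degree.

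The real content is the $\ua=\Zo$ component. Here I would use the last paragraph of part (b): if $\tH^{j-2}(\La;K)=\tH^{j-1}(\Delta;K)=0$, then $[H^j_\m(R)]_{\Zo}=0$. Given $j<k$, put $j'=j-1<k-1$. The hypothesis, applied with index $j'$, gives $\tH^{j'-1}(\La;K)=\tH^{j-2}(\La;K)=0$ and $\tH^{j'}(\Delta;K)=\tH^{j-1}(\Delta;K)=0$. That is exactly what the criterion requires, so $[H^j_\m(R)]_{\Zo}=0$ for all $j<k$.

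Combining the two cases gives $H^j_\m(R)=0$ for $j<k$, hence $\depth_\m R\geq k$.

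The main obstacle is the index bookkeeping. We must check that the hypothesis range $j<k-1$, after the shift by one, covers every local cohomology degree $j<k$. We must also handle the degenerate small degrees ($j\leq 0$, where the reduced cohomology in degree $-1$ and the convention for $\Delta'/\La$ matter) and confirm that the nonzero-$\ua$ contributions in degree $d+1$ never fall below $k$.
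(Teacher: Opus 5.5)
Your argument for $k\leq d+1$ is exactly the paper's, which simply reads the corollary off Theorem~\ref{toptolc}. Components with $\ua\neq\Zo$ can be nonzero only in degree $d+1$. The $\Zo$ components are killed by the last clause of part~(b) after the index shift $j\mapsto j-1$, and you carry out that shift correctly.

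The step that does not work is your handling of $k>d+1$. The hypotheses for such $k$ do imply those for $k=d+1$, but that only gives $\depth_\m R\geq d+1$, not $\depth_\m R\geq k$. Because $\depth_\m R\leq\dim R=d+1$, the conclusion is actually false for such $k$ whenever the hypotheses hold. So the statement can only be true there vacuously. The hypotheses are not vacuous in that range either: the conditions with index $j\geq d+1$ hold automatically, but the one with $j=d$ does not.

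To finish, you have two options. The first is to state the result only for $k\leq d+1$, which is the range the paper's one-line derivation from Theorem~\ref{toptolc} really addresses. The second is to show that the hypotheses cannot hold when $k\geq d+2$, as follows.
\begin{itemize}
\item In that case they force $\tH^i(\La;K)=0$ for all $i\leq d-1=\dim\La$. So $\La$ would be $K$-acyclic, with unreduced Euler characteristic $\chi(\La)=1$.
\item Summing over the barycentric subdivision of $\Delta'$, grouped by minimal face, gives $\chi(\Delta')=-\sum_{F\neq\varnothing}\widetilde{\chi}(\link_{\Delta'}F)$.
\item Interior links are homology spheres and boundary links are homology balls, so this sum equals $(-1)^d\bigl(\chi(\Delta')-\chi(\La)\bigr)$.
\item Hence $\chi(\La)$ is $0$ or $2\chi(\Delta')$. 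In either case it is even, which contradicts $\chi(\La)=1$.
\end{itemize}
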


\begin{examp}\label{LaSph} Let the hypotheses and notation be the same as 
in Theorem~\ref{toptolc}. \emp{Assume
in addition, that $\Delta'$ is connected and that $\La$ is a $V$-homology $(d-1)$-sphere.} 
We describe in some detail the
information about this case available from Theorem~\ref{toptolc}.  When that fails to give a complete answer, we work
out what further can be said using cellular sheaf cohomology, which illustrates the need for this method.  In some cases, we also need to use
more techniques from algebraic topology, like the notion of an orientable $K$-homology manifold
over $K$ (which is defined whenever $K$ is any commutative ring) and  
Poincar\'e-Lefschetz duality
for manifolds with boundary. We provide references for the additional topological material.
Some key results obtained are displayed boxed.

By Lemma~\ref{HJ} for $\ua\neq \Zo$, 
$[H_{(\ux)}^j(J)]_{\ua}\cong K$ if $j = d+1$, $\ua \leq \Zo$
and $G_{\ua} \in \Delta'$ and vanishes otherwise. If $\ua = \Zo$ then $[H_{(\ux)}^j(J)]_{\Zo} \cong \tH^{j-1}(\Delta',\Lambda;K) 
\cong\tH^{j-1}(\Delta'/\Lambda;K).$ 
Since $\Lambda$ is a $V$-homology sphere, Example~\ref{LaSphere} gives

\ben
\item $[H_{\m}^j(V[\La])]_{\ua}\cong
\begin{cases}
L/V,&j=d+1,\ \ua\leq0,\ G_{\ua}\in\Lambda,\\
0,&\text{otherwise}.
\end{cases}$ 
\een
This includes $\ua=\Zo$. Theorem~\ref{2seqs}(1) yields the sequence:
\benr
\item $\cdots \to H_{\m}^{j-1}(V[\La]) \to H_{(\ux)}^j(J) 
\to H_{\m}^j(R)\to H_{\m}^j(V[\La]) \to\cdots$ 
\een 
We now focus on the case where $\ua \neq \Zo$. As usual, we may assume also that $\ua \leq \Zo$ 
and that $G_{\ua} \in \Delta'$:  without both these conditions,  the component vanishes.
Therefore, we now assume these conditions and that $\ua\leq \Zo$. 
Then $[H_{\m}^j(R)]_{\ua} = 0$ unless $j=d+1$. When $j=d+1$, there are two distinct cases, 
depending on whether $G_{\ua}\in\Delta'\sm\Lambda$ or $\nl \neq G_{\ua} \in \La$.
 If $G_{\ua}\in\Delta'\sm\La$, then (1) gives $[H_{\m}^{d+1}(V[\La])]_{\ua}=0.$
Thus, (2) reduces to $ 0\to K \to [H_{\m}^{d+1}(R)]_{\ua}\to 0$ and 
$[H_{\m}^{d+1}(R)]_{\ua} \cong K\ $ if  
$\ua\leq \Zo,\ \nl \neq G_{\ua}\in\Delta'\sm\Lambda.$ 
This is the first case of Theorem~\ref{toptolc}(a): the $L/V$ quotient occurs only for faces in $\Lambda$. On the other hand, if $\nl\neq G_{\ua}\in\Lambda$ then (2) gives
\benr
\item $0\to K \to [H_{\m}^{d+1}(R)]_{\ua} \to L/V \to 0$. 
\een 
Theorem~\ref{toptolc} records this extension but does not completely identify it. 
The cellular sheaf description does identify it. Put $r=d-|G_{\ua}|$ and let
$B_G:=\op{link}_{\Delta'}G_{\ua}$ and $S_G:=\op{link}_{\La}G_{\ua}.$
Then $B_G$ is a $K$-homology $r$-ball and $S_G$ is its $K$-homology $(r-1)$-sphere boundary. 
Note that the relevant cellular sheaf complex has three pieces, namely
$\tC^\bullet(B_G,S_G;K)$,  $\tC^\bullet(S_G;L)[-1]$, and $\tC^\bullet(S_G;V).$
Their only relevant cohomology modules are, respectively, 
$K$ in degree $r$, $L$ in degree $r$, and $V$ in degree $r-1$.
The connecting map from the third term to the first two is, up to signs and choices of orientation,
$V\arwf{(\iota,\pi)}L\oplus K$, where $\iota:V\inj L$ and $\pi:V\surj K=V/tV$. 
Thus, $[H_{\m}^{d+1}(R)]_{\ua} \cong
\op{Coker}\bigl(V\arwf{(\iota,\pi)}L\oplus K\bigr)$. 
There is an explicit isomorphism 
$\displaystyle \frac{L\oplus K}{\{(v,\bar v):v\in V\}}\arwf{\ \cong\ }L/tV$
given by $(\ell,\bar a)\longmapsto \ell-a+tV$, where $a\in V$ is any lift of $\bar a$. Therefore
$[H_{\m}^{d+1}(R)]_{\ua}\cong L/tV$  if $\ua\leq \Zo$, and $\nl\neq G_{\ua} \in \La.$ 

After choosing the sign of the identification of the leftmost $K$, (3) is the natural sequence
$0\to V/tV \to L/tV\to L/V\to 0$, which is not split: $L/tV$ is $t$-divisible, 
whereas $K\oplus L/V$ is not. Multiplication by $t^{-1}$ gives an  abstract $V$-module isomorphism $L/tV\cong L/V$, but $L/tV$ is the more informative description because it records the embedded copy of $K$. Thus, for $\ua \neq \Zo$, 

\[ \boxed{[H_{\m}^{j}(R)]_{\ua}\cong \begin{cases}
 K,
 &j=d+1,\ \ua\leq \Zo,\
   G_{\ua}\in\Delta'\sm\Lambda,\\[1mm]
L/tV,
 &j=d+1,\ \ua\leq \Zo,\
   \nl\neq G_{\ua}\in\La,\\[1mm]
0,&\text{otherwise.}
\end{cases}}\]

We next consider the multidegree $\Zo$ components below the top homological degree.
Let $Q$ denote the quotient space $\Delta'/\La$. Theorem~\ref{toptolc} yields
$\cdots\to \tH^{j-1}(Q;K) \to [H_{\m}^j(R)]_{\Zo} \to [H_{\m}^j(V[\La])]_{\Zo} 
\to \tH^j(Q;K)\to\cdots$. 
By (1), the middle term involving $V[\La]$ vanishes unless $j=d+1$. Hence, for all $j\leq d$:
\benr
\item $[H_{\m}^j(R)]_{\Zo} \cong \tH^{j-1}(Q;K) = \tH^{j-1}(\Delta',\La;K).$ 
\een
These are the only possible local cohomology modules below the top dimension. Note that
they are concentrated entirely in multidegree $\Zo$,
they are finite-dimensional $K$-vector spaces, and
they are annihilated by $\m$. The cone $C$ is contractible, and, by excision, we have  
that $\tH^j(Q;K) \cong \tH^j(\Delta,C;K) \cong\tH^j(\Delta;K).$ Hence, we have that
$[H_{\m}^j(R)]_{\Zo} \cong \tH^{j-1}(\Delta;K)$ for $0\leq j\leq d.$ 

This has a useful interpretation: the obstruction to being Cohen-Macaulay is concentrated in 
multidegree $\Zo$,   and is measured by the reduced $K$-cohomology of the closed homology
manifold $\Delta$.

We next focus on the top degree component. Note that  
$\tH^d(\Delta',\Lambda;K) \cong \tH^d(Q;K).$ 

Theorem~\ref{toptolc}  gives an extension
\benr
\item $0\to \tH^d(Q;\,K) \to [H_{\m}^{d+1}(R)]_{\Zo} \to L/V \to 0$. 
\een
The cellular sheaf complex again gives more information. Let
$\partial:\tH^{d-1}(\La;\,K) \to \tH^d(\Delta',\La;\,K) $
be the connecting homomorphism of the pair, and define
$\beta:V=\tH^{d-1}(\Lambda;V)\arwf{\ \pi\ }\tH^{d-1}(\Lambda;K)\arwf{\ \partial\ }\tH^d(Q;\,K)$. 
Then, up to an overall sign that does not matter, we have that
$[H_{\m}^{d+1}(R)]_{\Zo} \cong 
\op{Coker} \left(V\arwf{(\iota,\beta)} L\oplus \tH^d(Q;\,K) \right).$ 
This completely describes the extension in (5).  
Since $\beta$ factors through $K$, let $\bar\beta:K\to \tH^d(Q;\,K)$
be the induced $K$-linear map. We may identify $\bar\beta$ with $\partial$. 
Its rank is at most one.  There are two cases: 

\[\boxed{[H_{\m}^{d+1}(R)]_{\Zo} \cong \begin{cases}   L/V\oplus \tH^d(Q;\,K) & \text{\ if\ } \bar\beta = 0 \\ L/tV\oplus \tH^d(Q;\,K)/\bar\beta(K) &  \text{\ if\ } \ \bar\beta \neq 0.
                                  \end{cases}}\]

If we think in terms of abstract  $V$-modules this can be summarized as
\benr
\item $[H_{\m}^{d+1}(R)]_{\Zo} \cong L/V\oplus K^{\,\dim_K \tH^d(Q;\,K)-\op{rank}\bar\beta}.$  
\een

We can say more.  If $\Delta'$ is 
$K$-orientable\footnote{See Spanier \cite[Ch.\,5B{\bf 3}\,p.\,278]{Spa95} or \cite[\S3.3]{Hat02} 
which treats the case of manifolds: the definitions for $A$-homology manifolds are identical. 
This notion of being $\Z$-orientable is the same as the usual notion for manifolds.
Every $(\Z/2\Z)$-manifold is $(\Z/2\Z)$-orientable, because $1 = -1$ mod 2 and so sign-reversing
loops like the twist in a M\"obius strip are algebraically undetectable.}
then $\tH^d(Q;\,K)\cong K$
and the connecting map
$\tH^{d-1}(\Lambda;K)\to H^d(\Delta',\Lambda;K)$
is an isomorphism. For $d\geq1$, this is dual to the homology boundary map sending the relative fundamental class to the boundary fundamental class; reduced homology is used when $d=1$. The case $d=0$ follows directly from the augmented cochain complex. Thus $\bar\beta$ is an isomorphism and
$[H_{\m}^{d+1}(R)]_{\Zo}\cong L/tV.$ 
If $\Delta'$ is not $K$-orientable, then
$\tH^d(Q;\,K)=0,$ and
$[H_{\m}^{d+1}(R)]_{\Zo}\cong L/V.$ 

Thus in either connected case, $[H_{\m}^{d+1}(R)]_{\Zo}\cong L/V$
as an abstract $V$-module, but in the orientable case the more natural model is the 
non-split extension $L/tV$. In residual characteristic $2$, a connected homology 
manifold is $K$-orientable, so the degree-zero component has the natural description $L/tV$. \smallskip

We now put things together to give the complete component formula.
The answer as a collection of abstract $V$-modules is: \smallskip

\[\boxed{[H_{\m}^j(R)]_{\ua} \cong
\begin{cases}
\tH^{j-1}(\Delta',\Lambda;K),
   &\ua=\Zo,\ 0\leq j\leq d,\\[2mm]
K,
   &j=d+1,\ \ua\neq0,\ \ua\leq \Zo,\
     G_{\ua}\in\Delta'\sm\Lambda,\\[2mm]
L/V,
   &j=d+1,\ \ua\leq \Zo,\
     G_{\ua}\in\Lambda,\\[2mm]
0,&\text{otherwise}.  
\end{cases}}\]
\smallskip

Of course, in the third line, $\ua=\Zo$ is included. For nonzero boundary degrees, and in the orientable degree-zero case, the more precise natural module is $L/tV$, not merely an unspecified 
copy of $L/V$. Equivalently, as a finely graded $V$-module, 
\begin{center} 
\[ \boxed{\Strut{20pt} H_{\m}^{d+1}(R)  \cong\,\, [H_{\m}^{d+1}(R)]_{\Zo}\,\,\,\, \mplus\,\,{\bigoplus_{\substack{\ua\leq \Zo,\ \ua\neq \Zo, \,\, G_{\ua}\in\Delta'\sm\Lambda}}} K \,\,\,
\mplus \,\,\, \bigoplus_{\substack{\ua\leq \Zo,\,\, \ua\neq \Zo,\ G_{\ua}\in\Lambda}} L/tV}\]
\end{center}
This is a description as a graded $V$-module. The multiplication maps by the $x_i$ between distinct multigraded pieces contain additional incidence information not recorded by Theorem~\ref{toptolc}.

We next comment on depth and the Cohen-Macaulay property.
Since $\dim R=d+1$, formula (4) shows that $R$ is generalized Cohen-Macaulay: 
every $H_{\m}^j(R)$ for $j<d+1$ is a finite-dimensional $K$-vector space concentrated 
in degree zero. Moreover:\medskip

\centerline{$R$ is Cohen-Macaulay $\iff \tH^j(\Delta',\Lambda;K)=0$ 
for every $j<d \iff \tH^j(\Delta;K)=0$ for every $j<d$.} 

If $\Delta'$ is $K$-orientable, 
Poincar\'e-Lefschetz duality \cite[Ch.\,6,\,Thm.\,20,\,p.\,298]{Spa95} 
identifies $H^{j-1}(\Delta',\Lambda;K) \cong H_{d-j+1}(\Delta';K)$.
Therefore, in that case $R$ is Cohen-Macaulay precisely when $\Delta'$ is a $K$-homology ball.
More explicitly, with the convention that the minimum of the empty set is $+\infty$, we have:
\medskip

\centerline{
$\depth_{\m}R=\min\left\{d+1,\,1+\min\{j:\tH^j(\Delta',\Lambda,\,K)\neq 0\} \right\}$
}

This completes Example~\ref{LaSph}.
\end{examp}

\begin{examp}\label{RP2again} 
We revisit the case of the real projective plane example, calculated in Example~\ref{RP2},
specializing Example~\ref{LaSph}.  Now, $d=2$, $\Delta'$ is a M\"obius strip, and 
$Q:=\Delta'/\La$ is homotopic to
the real projective plane. In residual characteristic $2$, we have
$[H_{\m}^{2}(R)]_{\Zo}\cong K$ and $[H_{\m}^{3}(R)]_{\Zo}\cong L/tV$,
whereas in residual characteristic different from $2$, the lower component vanishes and 
the top degree-zero component is $L/V$. This recovers the behavior found in Example~\ref{RP2}.
\end{examp}

\subsection{A spectral sequence approach to computing cellular sheaf cohomology}\label{spectral}

Although spectral sequences are not needed to analyze cellular sheaf cohomology in the
case of $t$-Stanley-Reisner rings, the methods here suggest a general approach
for calculating such cohomology. The technique we are about to discuss can be applied 
to any cellular sheaf with values in an abelian category on a finite abstract simplicial 
complex $\La \not= \Nl$. Additional  
structure on the values of $\sF$ plays no role in this theory: in a sense, the values might as
well be in the category of abelian groups. If, for example, the values are $V$-modules, 
where $V$ is an arbitrary ring, that structure is automatically preserved throughout the 
calculation.

The idea is to use the poset structure of $\La$ to filter the sheaf $\sF$. But if
the functor $\sF$ on $\La$ factors through a simpler poset, that may provide a
substantial advantage, making calculations easier.  This is the case in the application
to local cohomology of $t$-Stanley-Reisner rings. 

A poset with a minimum element is called \emph{ranked} if every saturated chain from the minimum to a fixed element has the same finite length. This length is the \emph{rank} of the element. The term \emph{height} is also used. If these ranks are bounded, their maximum is the \emph{rank} of the poset, denoted $\rank(\Phi)$. The face poset of a nonempty finite simplicial complex is ranked, with unique minimum $\nl$, and its rank is one more than its dimension.

Throughout the sequel, $\Phi$ denotes a ranked poset of finite rank with a minimum element.

Now consider a cellular sheaf $\sF$ on $\La$ taking values in the abelian category $\cC$.
Assume that $\sF$ factors as $\La\arwf{\alpha}\Phi\arwf{\sG}\cC$, where $\La$ is its face poset under inclusion, $\alpha$ is order-preserving, and $\sG$ is a functor. Put $\rho:=\rank(\Phi)$.

We can consider any cellular sheaf using this point of view by taking $\Phi$ to be the poset
$\La$ and the map $\alpha$ to be the identity map. However, in many cases, there is a 
choice of $\La \to \Phi$ that is more helpful. 

We abbreviate $\tC^\bullet:=\tC^\bullet(\La;\sF)$. Define a decreasing filtration $\fC$ degree by degree by
\[\ang{\tC^q}_i:=\bigoplus_{\substack{F\in\La,\ \dim F=q\\ \rank(\alpha(F))\geq i}}\sG(\alpha(F)).
\]

Since $\alpha$ is order-preserving, the coboundary preserves these subobjects, so they form subcomplexes. This is a finite filtration: $\ang{\tC^\bullet}_0=\tC^\bullet$ and $\ang{\tC^\bullet}_{\rho+1}=0$. Its spectral sequence is denoted $E_r^{i,j}$ for $r\geq0$. We use the standard cohomological convention of \cite[\S12.24, Tag 012K]{Stacks}:
\[E_0^{i,j}=\ang{\tC^{i+j}}_i/\ang{\tC^{i+j}}_{i+1}.
\]
Thus $i$ is the filtration degree and $i+j$ is the cochain degree; the reduced degree $-1$ is included.

It is easy to calculate $E_0$, which is the associated graded complex $\gr_{\fC}(\tC^{\bullet})$ for this filtration. At level $i$,  the faces $F \in \La$ that survive are precisely those such that $\alpha(F)$ has rank $i$. The induced coboundary retains only those cofaces whose images under $\alpha$ also have rank $i$. Comparable elements of equal rank are equal, so this coboundary does not mix distinct fibers of $\alpha$.  For $\phi\in\Phi$, let $\tC_\phi^q$ be the direct sum of the copies of $\sG(\phi)$ indexed by the $q$-faces $F$ with $\alpha(F)=\phi$, with coboundary retaining only cofaces in that same fiber. These are complexes in the associated graded object, not in general subcomplexes of $\tC^\bullet$. We have
\[E_0^{i,j}\cong\bigoplus_{\rank(\phi)=i}\tC_\phi^{i+j}.
\]

Given $\phi \in \Phi$, we can define two simplicial subcomplexes of $\La$, namely $\La_{\leq \phi}$,
which consists of all faces of $\La$ whose image in $\Phi$ is $\leq \phi$, and
$\La_{< \phi} \inc \La_{\leq \phi}$, consisting of all faces of $\La$ whose image
is $< \phi$.

There is a natural isomorphism of cochain complexes
\[ \tC_\phi^\bullet\cong\tC^\bullet(\La_{\leq\phi},\La_{<\phi};\sG(\phi)).
\]
Indeed, the relative cochains are supported exactly on faces with image $\phi$, and all maps within this fiber are identities on $\sG(\phi)$, with the usual incidence signs. Consequently,
\[ E_1^{i,j}\cong\bigoplus_{\rank(\phi)=i}\tH^{i+j}(\La_{\leq\phi},\La_{<\phi};\sG(\phi)).
\]
Only finitely many summands are nonzero, since $\La$ is finite. The theory of
spectral sequences defines a differential $d_r$ on $E_r$ that respects the bigrading, although
it shifts degrees. Specifically, $d_r:E^{i,j}_r \to E_r^{i+r,j-r+1}$. 
Taking the cohomology of $E_r$ with respect to $d_r$ gives $E_{r+1}$.   For fixed $n$,  
the terms on the $n^{\op{th}}$ diagonal defined by $i+j = n$, map to the terms on the $(n+1)\,$th diagonal, defined by $i+j = n+1$. Since only the columns $0\leq i\leq\rho$ can be nonzero, $d_r=0$ for $r>\rho$, and the double array is then stable up to canonical isomorphism. The double array one gets for all large $r$ is denoted $E_{\infty}$.  The terms $E_\infty^{i,j}$ for $i+j = n$ give a great deal of information about the cohomology of the original complex,  $\Homol^n\big(\tC^{\bullet}(\La;\,\sF)\big)$.  Its filtration is given by
$\op{Im}\bigl(\Homol^n(\ang{\tC^\bullet}_i)\to\Homol^n(\tC^\bullet)\bigr)$. The $i$th associated graded object is $E_\infty^{i,n-i}$. These graded objects need not determine the extensions needed to recover $\Homol^n(\tC^\bullet)$.

The cellular sheaves that arise in the study of the local cohomology of 
$t$-Stanley-Reisner rings take on only three values, $V$, $L$, and $K$.  
We may take $\Phi$ to be the poset $\{V,\,L,\,K\}$
with the morphisms $V \surj K$, $V \inj L$, and the identity maps. 
$V$ is the unique minimum 
element, and has rank 0,  while $L$, $K$ are maximal elements with rank 1. 
Here $\sG$ is essentially the identity map. The complexes 
$\tC^{\bullet}_L$, $\tC^{\bullet}_V$, and $\tC^{\bullet}_K$ correspond 
to the complexes  $X^{\bullet}$, $Y^{\bullet}$, and $Z^{\bullet}$ respectively, 
discussed in the proof of 
Proposition~\ref{prop-cech-vs-sheaf} and in 
Discussion~\ref{XYZ}. In this case
$\tC^{\bullet}(\La,\, \sF)$ fits into an exact sequence 
$0 \to \tC^{\bullet}_L \oplus \tC^{\bullet}_K \to \tC^{\bullet}(\La,\, \sF) \to \tC^{\bullet}_V \to 0$. The differential $d_1$ is the connecting homomorphism
$\Homol^q(\tC_V^\bullet)\to\Homol^{q+1}(\tC_L^\bullet\oplus\tC_K^\bullet)$
in the associated long exact cohomology sequence, and $E_2=E_\infty$. Thus the spectral sequence gives the same kernel and cokernel pieces as that long exact sequence. Recovering the middle cohomology modules still requires the extension information, as in Example~\ref{LaSph}.

\section{Serre conditions}
\label{sec-serre}
A commutative ring $R$ is said to have the Serre condition \Serre{k} if, for every prime $P$, the depth of $R_P$ is at least the minimum of $k$ and the height of $P$. 
Since depth can be detected by the vanishing of local cohomology modules, Hochster's theorem for local cohomology suggests that Serre conditions for a Stanley-Reisner ring could be checked topologically, by examining the singular homology of the links. Indeed, it is a theorem due to Naoki Terai that a Stanley-Reisner ring $K[\Delta]$ over a field $K$ has \Serre{k} if and only if the reduced simplicial homology of $\link_\Delta F$ vanishes in certain degrees for every face $F$ (see 
Thm.~\ref{thm-terai}). In this section, we will extend Terai's theorem first to Stanley-Reisner rings defined over arbitrary Noetherian rings, and then to $t$-Stanley-Reisner rings.

\subsection{Serre conditions for Stanley-Reisner rings} \label{subsec-SR-serre} 

Throughout \S\ref{subsec-SR-serre}, let $A$ be a Noetherian commutative ring and let $\Delta$ be a nonempty simplicial complex on vertices $\vertset$. The excluded case $\Delta=\Nl$ gives the zero ring $A[\Delta]$, which satisfies every Serre condition independently of $A$.

\begin{defn}
    \label{def-serre}
    Let $R$ be a commutative Noetherian ring, and let $k \in \N$. We say that $R$ satisfies the \emph{$k^{th}$ Serre condition}, or $R$ has \Serre{k}, if
    $\depth_{P} R_P \geq \min \{k, \height P\} \quad \forall P \in \Spec(R)$.

    More generally, a Noetherian $R$-module $M$ has \Serre{k} if $\depth_{P} M_P \geq 
    \min \{k, \,\dim(M_P) \}$ for all primes $P \in \op{Supp}(M)$.
\end{defn}

The following result \cite[Prop.\,6.4.1,\,Cor.\,6.4.2]{EGA} will be useful.

\begin{prop}\label{serrek} Let $A \to R$ be a faithfully flat homomorphism of Noetherian rings. If $R$ satisfies \Serre{k}, then $A$ satisfies \Serre{k}. Conversely, $R$ satisfies
\Serre{k} provided that both of the following conditions hold:
\benn
\item $A$ satisfies \Serre{k}.  
\item For every prime $\p \in \Spec(A)$, the fiber $\kappa_{\p} \otimes_A R$ satisfies \Serre{k}.
\een

Consequently a polynomial ring or Laurent polynomial ring over a Noetherian ring $R$ satisfies \Serre{k} if and only if  $R$ does.

\end{prop}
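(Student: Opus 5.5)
The statement just above is Proposition~\ref{serrek}, quoted from \cite[Prop.\,6.4.1,\,Cor.\,6.4.2]{EGA}; the proof I would give is a localization argument. The tool is Proposition~\ref{flatdp} together with the flat dimension formula: for a flat local map $A_\p \to R_P$ with $P$ lying over $\p$, put $F:=\kappa_\p\otimes_A R$ and let $\ov{P}$ be the prime of $F$ corresponding to $P$, so that $F_{\ov{P}} = R_P/\p R_P$. Then
\[
\depth R_P=\depth A_\p+\depth F_{\ov{P}}, \qquad \height P=\height \p+\dim F_{\ov{P}}.
\]
I fix $P\in\Spec R$, set $\p=P\cap A$, and compare both sides of the \Serre{k} inequality using these two additive formulas.

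For the converse direction, assume (1) and (2). Write $a=\depth A_\p\geq\min\{k,h\}$ with $h=\height\p$, and $b=\depth F_{\ov{P}}\geq\min\{k,e\}$ with $e=\dim F_{\ov{P}}$. I must show $a+b\geq\min\{k,h+e\}$, which I do by cases. If $h\geq k$, then $a\geq k$. If $e\geq k$, then $b\geq k$. Otherwise $a\geq h$ and $b\geq e$, so $a+b\geq h+e$. In every case the inequality holds; this part is routine.

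For the descent direction, assume $R$ satisfies \Serre{k} and let $\p\in\Spec A$. By faithful flatness there is a prime of $R$ over $\p$. I choose $P$ minimal over $\p R$, so that $e=\dim F_{\ov{P}}=0$ and hence $b=0$. The two formulas then give $\depth A_\p=\depth R_P\geq\min\{k,\height P\}=\min\{k,\height\p\}$, which is exactly \Serre{k} for $A$ at $\p$.

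For the final consequence, take $R'=R[x]$ or $R[x,x^{-1}]$, which is faithfully flat over $R$; iterating handles several variables. Every fiber is $\kappa_\p[x]$ or $\kappa_\p[x,x^{-1}]$. These are regular rings, hence Cohen--Macaulay, and so they satisfy \Serre{k} for all $k$. Both directions of the proposition then apply and give the equivalence. The only point that needs care is choosing $P$ minimal over $\p R$ in the descent step so that the fiber contributes nothing; everything else is bookkeeping with the additive formulas.
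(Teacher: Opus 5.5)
Your argument is correct and is essentially the standard proof behind the paper's citation of \cite[Prop.\,6.4.1, Cor.\,6.4.2]{EGA}; the paper gives no proof of its own. The common ingredients are additivity of depth and dimension along the flat local maps $A_\p\to R_P$, a case check for ascent, and, for descent, a prime $P$ over $\p$ whose fiber $R_P/\p R_P$ is zero-dimensional. One point is worth making explicit: a prime $P$ minimal over $\p R$ really does contract to $\p$. This holds because nonzero elements of the domain $A/\p$ are nonzerodivisors on the flat $A/\p$-module $R/\p R$, so they avoid its minimal primes. Alternatively, you can simply take $P$ minimal in the nonempty fiber over $\p$.
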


\begin{rmk}\label{rmk-serre-locus}
Suppose $R$ is the quotient of a Cohen-Macaulay ring $S$. It follows 
from \cite[Remarques 6.11.9(ii)]{EGA} and \cite[Prop 6.11.8]{EGA} that 
the \Serre{k} locus of $R$ as an $S$-module
   \[
      U_\text{\Serre{k}}(R) = \{P \in \Spec S \; : \; R_P \text{ has \Serre{k}}\}
   \]
   is Zariski open in $\Spec S$. Since $\Spec R \cong V(I)$ is closed in $\Spec S$, the restriction
   \[
       U_{\text{\Serre{k}}}(R) \cap \Spec R = \{P \in \Spec R \; : \; R_P \text{ has \Serre{k}}\}
   \]
   is open in $\Spec R$, and thus its complement, the \textit{non}-Serre locus of $R$, is closed.
\end{rmk} 

The following criterion is given in \cite[p.\,452]{terai}, based on a criterion of Terai and Yanagawa published in \cite[Cor 3.7]{yanagawa}:

\begin{thm}[\cite{terai}] \label{thm-terai}
    Let $k \geq 2$, let $K$ be a field, and let $\Delta$ be a simplicial complex. Then $K[\Delta]$ has \Serre{k} if and only if $\simpHomol_j(\link_\Delta F; K) = 0$ for all $j < \min\{k-1, \dim \Delta - \dim F - 1\}$ and $F \in \Delta$.
\end{thm}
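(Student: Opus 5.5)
Terai's criterion (Theorem~\ref{thm-terai}) is stated in the literature, so the plan is to reduce it to the Yanagawa--Terai criterion (\cite[Cor 3.7]{yanagawa}) together with Hochster's formula for local cohomology (Theorem~\ref{thm-og-hochsform-localcohomver}) and the localization behavior of Stanley-Reisner rings.

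First reduce \Serre{k} to a statement about graded primes. By Remark~\ref{rmk-serre-locus} the non-\Serre{k} locus is closed. It is also stable under the $\Z^n$-torus action, or more elementarily, for a graded ring the depth and height at $P$ agree with those at the largest graded prime $P^*$ inside $P$, up to the same shift: $\depth R_P = \depth R_{P^*} + \height(P/P^*)$ and $\height P = \height P^* + \height(P/P^*)$. Hence \Serre{k} may be checked at the graded primes $P_F$, $F\in\Delta$.

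Next compute the local ring $K[\Delta]_{P_F}$. Inverting the variables $x_i$ with $v_i\in F$ gives
\[
K[\Delta][x_F^{-1}]\cong K[\link_\Delta F][x_i^{\pm1}: v_i\in F].
\]
So, after a faithfully flat Laurent extension (Proposition~\ref{serrek} and Proposition~\ref{flatdp}), the depth and dimension at $P_F$ are those of $K[\link_\Delta F]$ at its homogeneous maximal ideal, and $\height P_F=\dim\link_\Delta F+1$ by Proposition~\ref{prop-ADeltabasics}(c). Hochster's formula (Theorem~\ref{thm-og-hochsform-localcohomver}) applied to $\link_\Delta F$, whose links are $\link_\Delta(F\cup G)$, turns the condition $\depth\geq\min\{k,\dim\link F+1\}$ into the vanishing of $\simpHomol^{\,j}(\link_\Delta G;K)$ for all $G\supseteq F$ and $j<\min\{k,\dim\link_\Delta F+1\}-|G\sm F|-1$. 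Over a field, cohomology and homology have the same dimension.

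The main obstacle is matching this family of conditions to Terai's single inequality $j<\min\{k-1,\dim\Delta-\dim F-1\}$, which uses $\dim\Delta$ rather than $\dim\link F$. The plan has two parts.

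\begin{itemize}
\item[(i)] Show first that \Serre{2} with $k\ge2$ forces $\Delta$ to be pure. \Serre{2} implies $R$ is connected in codimension one, and applying this at each link, together with the $\simpHomol_0$ conditions for links of low dimension, shows all facets have equal dimension. Conversely, the right-hand condition at the faces $F$ with $\dim\Delta-\dim F-1=1$ gives connectedness of one-dimensional links, which yields purity. Once $\Delta$ is pure, $\dim\Delta-\dim F-1=\dim\link_\Delta F$.
\item[(ii)] Show the conditions at each $F$ with shifted bounds for larger $G$ are implied by the conditions at $G$ itself. This holds because $\min\{k,m\}-s-1\le\min\{k-1-s,\,m-s\}-1$, and $\dim\link G=\dim\link F-|G\sm F|$ by purity.
\end{itemize}

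Since the statement is classical, I would ultimately cite \cite[Cor 3.7]{yanagawa} and \cite[p.\,452]{terai} for the purity and bookkeeping details rather than reproving them.
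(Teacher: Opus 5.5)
The paper does not prove Theorem~\ref{thm-terai}; it quotes it from Terai and Yanagawa--Terai. Your outline is nevertheless the right one. It is exactly the route the paper takes for its own generalizations (Corollary~\ref{testlink}, Theorem~\ref{thm-tSR-terai}):
reduce to graded primes (Corollary~\ref{p*}), localize via Remark~\ref{Laur}, convert depth into link vanishing with Lemma~\ref{cKG}, and do the bookkeeping with $s=|G\sm F|$ under purity.

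Two steps, however, are wrong as written. The first is the displayed inequality in (ii). Read $m$ as $\dim\link_\Delta F+1$. Then your right side is $\min\{k-s-2,\,m-s-1\}$, which is strictly smaller than the left side $\min\{k-s-1,\,m-s-1\}$ whenever $k\le m$. What you actually need is
\[
\min\{k,m\}-s-1=\min\{k-s-1,\ \dim\link_\Delta G\}\le\min\{k-1,\ \dim\link_\Delta G\},
\]
using $\dim\link_\Delta G=\dim\link_\Delta F-s$. Together with the choice $G=F$, this shows your family of conditions is exactly Terai's.

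The more serious error is your purity argument for the converse. The conditions at faces with $\dim\Delta-\dim F-1=1$ do not force purity. Take a $2$-simplex disjoint from an edge $\{v,w\}$: every vertex link is an edge or a single point, hence nonempty and connected, yet the complex is not pure. Speaking of ``one-dimensional links'' already presupposes purity.

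The correct argument is immediate. If $F$ is a facet with $\dim F<\dim\Delta$, then $\link_\Delta F=\{\nl\}$ has $\tH_{-1}\neq0$. But $j=-1$ lies in the required range $j<\min\{k-1,\dim\Delta-\dim F-1\}$, since both entries are $\geq0$. So the right-hand side forces purity on its own, as in the proof of Corollary~\ref{testlink}.

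The forward direction (\Serre{2} implies purity) you only gesture at. You have two options:
\begin{enumerate}
\item Cite equidimensionality of catenary \Serre{2} local rings, as the paper does just before Theorem~\ref{thm-tSR-terai}.
\item Argue directly. Pick $F$ maximal such that $\Gamma=\link_\Delta F$ is not pure. Then all vertex links of $\Gamma$ are pure, so the dimension of the facets through a vertex is constant along edges. This forces $\Gamma$ to be disconnected with $\dim\Gamma\ge1$. That contradicts $\depth K[\Gamma]\ge2$ (equivalently $\tH^0(\Gamma;K)=0$), which \Serre{2} at $P_F$ requires.
\end{enumerate}
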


Throughout the rest of this section unless otherwise specified, $A$ denotes a nonzero commutative 
Noetherian ring.

\begin{rmk}\label{rmk-serre-1}
    The \Serre{1} condition is equivalent to having no embedded primes; it follows that every reduced ring has \Serre{1}, including every Stanley-Reisner ring $K[\Delta]$ over a field. Over a commutative Noetherian ring $A$, the Stanley-Reisner ring $A[\Delta]$ has \Serre{1} if and only if $A$ itself has \Serre{1}. The $t$-Stanley-Reisner ring $\tSRrcomp{\Delta}$ is reduced and so  always has \Serre{1} (Proposition \ref{prop-tSRbasics} (e)). For the remainder of this section, we will concern ourselves only with \Serre{k} for $k \geq 2.$
\end{rmk}

The following result from \cite[Lemma 2.1,\,Thm.\,3.3(a)]{BaBr25} is very useful
in studying Serre conditions for $\Z^n$-graded rings. Note that the first three
sections of \cite{BaBr25} are primarily expository, based on earlier work in
\cite{GoWa78} and \cite{MaRo74}.

\begin{thm}\label{*} Let $R$ be a Noetherian $\Z^n$-graded ring, $M$ a \fg $\Z^n$-graded $R$-module, and $\p \in \op{Supp}M$ a prime ideal. Let $\p^*$ be the ideal generated by all $\Z^n$-homogeneous elements
in $\p$, which is prime. Then $\dim(M_\p) -\op{depth}(M_{\p}) = \dim(M_{\p^{*}}) - \op{depth}(M_{\p^{*}})$.
\end{thm}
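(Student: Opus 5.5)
The plan is to reduce to a single $\Z$-grading and then to invoke the classical one-variable comparison of $\p$ with $\p^*$. First I would check that $\p^*$ is prime and note that $\p^*\subseteq\p$, so $\p^*\in\op{Supp}M$ because $M_\p$ is a localization of $M_{\p^*}$. If $\p=\p^*$ there is nothing to prove, so assume $\p\ne\p^*$.

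\textbf{Reduction to one grading.} Choose a $\Z$-basis $e_1,\ldots,e_n$ of $\Z^n$. For $0\le k\le n$, let $R^{(k)}$ denote $R$ with the coarser grading by $\Z^n/\langle e_{k+1},\ldots,e_n\rangle\cong\Z^k$. Write $\p_k$ for the ideal generated by the $R^{(k)}$-homogeneous elements of $\p$, so that $\p_n=\p^*$ and $\p_0=\p$. The step to verify is that $\p_{k}$ is exactly the ideal generated by the elements of $\p_{k-1}$ that are homogeneous with respect to the refined grading; in other words, ``starring'' can be done one coordinate at a time. Granting this, it suffices to prove
\[
\dim M_{\q}-\depth M_{\q}=\dim M_{\q'}-\depth M_{\q'}
\]
in the following setting: a ring $T$ graded by a group $\Z^{k-1}\times\Z$, a prime $\q=\p_{k-1}$ that is already homogeneous for the $\Z^{k-1}$ part, and $\q'=\p_k$ its star with respect to the extra $\Z$-coordinate. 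Localizing at the multiplicative set of $\Z^{k-1}$-homogeneous elements outside $\q$ keeps everything graded. This reduces the problem to the case of a $\Z$-graded Noetherian ring, a finitely generated graded module, and a prime $\q$ with $\q^*\ne\q$.

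\textbf{The $\Z$-graded case.} Here I would use the classical results of Matijevic--Roberts and Goto--Watanabe, as in Bruns--Herzog, Thm.~1.5.8 and Thm.~1.5.9 (cf.\ \cite{GoWa78}, \cite{MaRo74}). These give
\[
\dim M_\q=\dim M_{\q^*}+1 \quad\text{and}\quad \depth M_\q=\depth M_{\q^*}+1
\]
whenever $\q\ne\q^*$. The underlying mechanism is as follows. After inverting the homogeneous elements outside $\q^*$, the graded ring $R/\q^*$ becomes a graded ring in which every nonzero homogeneous element is a unit, hence of the form $k[u,u^{-1}]$ with $u$ homogeneous of positive degree. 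The prime $\q$ then corresponds to a height-one prime of this Laurent polynomial ring. This one extra dimension is matched by one extra element of a regular sequence, obtained by lifting a generator of that height-one prime. Subtracting the two equalities gives the claimed invariance at each step of the induction, and chaining the $n$ steps gives the statement.

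\textbf{Main obstacle.} I expect the main obstacle to be the reduction step, not the $\Z$-graded computation. Two things must be checked there. First, the coordinatewise starring must be compatible, meaning that the homogeneous components of elements of $\p_{k-1}$ with respect to the finer grading already lie in $\p_k$. Second, the Bruns--Herzog statements, which are proved for a local ring at a homogeneous prime, must survive the partial localization used in the reduction. If the direct coordinatewise argument becomes awkward, I would instead prove the $\Z^n$-graded version of Thm.~1.5.8 directly. In that version the quotient by $\p^*$, after homogeneous localization, is a Laurent polynomial ring $k[u_1^{\pm1},\ldots,u_r^{\pm1}]$. Both dimension and depth would then increase by $\height(\p/\p^*)$, again by lifting a regular sequence from that Laurent ring.
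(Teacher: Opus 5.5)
The paper gives no argument of its own for this statement; it quotes \cite[Lemma 2.1, Thm.~3.3(a)]{BaBr25}, whose expository sections rest on \cite{GoWa78} and \cite{MaRo74}. Your route is a legitimate, self-contained derivation from the $\Z$-graded case: peel off one coordinate of the grading at a time, and use the $\Z$-graded theorem, under which $\dim$ and $\depth$ each rise by exactly one in passing from $\q^*$ to a non-graded prime $\q$. It also yields more than the statement. Primality of $\p^*$ comes out of the induction, and both differences equal the number of strict steps in the chain $\p=\p_0\supseteq\p_1\supseteq\cdots\supseteq\p_n=\p^*$.

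You flag the reduction as the main obstacle, but it takes one line. However, two of the things you propose to check there are wrong.

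\textbf{What is actually needed.} Suppose $\q=\p_{k-1}$ is homogeneous for the first $k-1$ coordinates. Then the ideal generated by the elements of $\q$ that are homogeneous for the \emph{single} $\Z$-grading $a\mapsto a_k$ equals $\p_k$. To see this, split such an element into its components for the first $k-1$ coordinates. These components lie in $\q$, because $\q$ is graded for those coordinates, and each one is homogeneous for the first $k$ coordinates. So you apply the $\Z$-graded theorem, together with the $\Z$-graded primality of the star, directly to $R$ with the grading $a\mapsto a_k$ and the prime $\p_{k-1}$; no localization is needed. If you want a $*$-local setting, localize at the elements homogeneous for $a\mapsto a_k$ lying outside $\p_k$. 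These avoid $\p_{k-1}$ by the identity just proved, so this changes neither $M_{\p_{k-1}}$ nor $M_{\p_k}$.

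\textbf{What fails in your version.} The compatibility you state, that the finer homogeneous components of elements of $\p_{k-1}$ lie in $\p_k$, is false: it would make $\p_{k-1}$ itself homogeneous for the first $k$ coordinates. Also, localizing at the elements homogeneous for the first $k-1$ coordinates outside $\q$ destroys exactly the grading you need. For example, take $\kappa[x,y]$ with $\deg x=(1,0)$, $\deg y=(0,1)$ and $\p=(y-1)$. Then $\p_1=\p$ and $\p_2=\p^*=0$. The components $y$ and $-1$ of $y-1$ are not in $\p_2$, and your localization would invert $y-2$.

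Finally, your reason for $\p^*\in\op{Supp}M$ is backwards. Since $\p^*\subseteq\p$, $M_{\p^*}$ is a localization of $M_\p$, not conversely. The correct reason is that $\Ann_RM$ is a homogeneous ideal contained in $\p$, hence contained in $\p^*$ and in every $\p_k$. You need this at each step of the chain anyway.
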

            
\begin{cor}\label{p*} If $R$ is a Noetherian $\Z^n$-graded ring and $k \in \N$, 
then $R$ has \Serre{k} if and only if 
$\depth R_P\geq\min\{k,\height P\}$ for every $\Z^n$-graded prime $P$.
\end{cor}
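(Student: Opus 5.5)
The plan is to derive Corollary~\ref{p*} directly from Theorem~\ref{*} applied with $M=R$. The direction ``$R$ has \Serre{k} $\Rightarrow$ the inequality holds at graded primes'' is trivial, since graded primes are primes. For the converse, take an arbitrary prime $P\in\Spec R$ and let $P^*$ be the ideal generated by the $\Z^n$-homogeneous elements of $P$. By Theorem~\ref{*}, $P^*$ is a $\Z^n$-graded prime, and every prime lies in $\op{Supp}R$. Theorem~\ref{*} then gives
\[
\height P-\depth R_P=\dim R_P-\depth R_P=\dim R_{P^*}-\depth R_{P^*}=\height P^*-\depth R_{P^*}.
\]
Write $e$ for this common defect, which is nonnegative.

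Next I use $P^*\subseteq P$. Since $\height P^*\leq\height P$, put $h=\height P$ and $h^*=\height P^*\leq h$. The hypothesis at $P^*$ says $h^*-e\geq\min\{k,h^*\}$. I want $h-e\geq\min\{k,h\}$, and I split into two cases.

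If $h^*\leq k$, the hypothesis gives $e\leq 0$, so $e=0$. Then $\depth R_P=h\geq\min\{k,h\}$.

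If $h^*>k$, the hypothesis gives $h^*-e\geq k$. Hence $h-e\geq h^*-e\geq k\geq\min\{k,h\}$.

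In both cases $\depth R_P\geq\min\{k,\height P\}$, so $R$ has \Serre{k}.

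The only point needing care is the monotonicity step: the defect $\dim-\depth$ is preserved in passing from $P^*$ to $P$ while the height can only increase. So the quantity $\depth-\min\{k,\height\}$ cannot get worse. That is exactly what the two cases check, and I expect no real obstacle beyond invoking Theorem~\ref{*} correctly, including the primality of $P^*$.
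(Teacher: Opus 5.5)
Your proof is correct and follows the paper's argument: both apply Theorem~\ref{*} with $M=R$ and use $\height P^*\leq\height P$. The paper argues by contrapositive, showing that a failure of the inequality at $P$ forces a failure at $P^*$, while you verify the inequality at $P$ directly by splitting on whether $\height P^*\leq k$.
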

\begin{proof}
Only sufficiency requires proof. Suppose that the depth inequality fails at a prime $\p$. Put $d=\dim R_\p$, $d^*=\dim R_{\p^*}$, $e=\depth R_\p$, and $e^*=\depth R_{\p^*}$. Since $\p^*\subseteq\p$, we have $d^*\leq d$. Theorem~\ref{*} gives $d-e=d^*-e^*$, so $e^*\leq e<k$ and $e^*<d^*$, the latter because $e<d$. Thus the depth inequality also fails at the graded prime $\p^*$, a contradiction.
\end{proof}

\begin{cor} Let $\cP$ be a property of Noetherian rings that is
local:  that is, a ring $T$ has $\cP$ if and only if $T_Q$ has $\cP$ for every $Q\in\Spec T$. 
Assume also that for every Noetherian $\Z^n$-graded ring $T$, it suffices to test $\cP$ 
at its $\Z^n$-graded primes.

Let $R$ be a Noetherian $\Z^n$-graded ring, let $P$ be a $\Z^n$-graded prime, and let $W_P$ be the multiplicative system of $\Z^n$-homogeneous elements of $R\sm P$. Then $R_P$ has $\cP$ if and only if $W_P^{-1}R$ has $\cP$.
\end{cor}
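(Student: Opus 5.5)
The plan is to prove the two directions separately, using the fact that localization commutes with the relevant operations on both sides. The key observation is that $W_P^{-1}R$ is a $\Z^n$-graded Noetherian ring, and its localizations at its primes are exactly the localizations $R_Q$ for primes $Q$ of $R$ with $Q\cap W_P=\nl$. The $\Z^n$-graded primes of $W_P^{-1}R$ are the extensions $W_P^{-1}Q$ of graded primes $Q$ of $R$ with $Q\subseteq P$. Indeed, a graded prime disjoint from $W_P$ contains only homogeneous elements of $P$; since it is generated by homogeneous elements, it is contained in $P$. Conversely, a graded $Q\subseteq P$ meets no element of $W_P$. For such $Q$ we have $(W_P^{-1}R)_{W_P^{-1}Q}\cong R_Q$.

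For the direction ``$W_P^{-1}R$ has $\cP$ $\Rightarrow$ $R_P$ has $\cP$'', note that $W_P\subseteq R\sm P$. Hence $P$ survives in $W_P^{-1}R$ as the prime $W_P^{-1}P$, and $R_P\cong (W_P^{-1}R)_{W_P^{-1}P}$. Since $\cP$ is local, $W_P^{-1}R$ having $\cP$ gives $\cP$ for this localization, so $R_P$ has $\cP$.

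For the converse, assume $R_P$ has $\cP$. By the hypothesis on $\cP$ applied to the $\Z^n$-graded ring $T=W_P^{-1}R$, it suffices to check $\cP$ at the graded primes of $T$. These are $W_P^{-1}Q$ with $Q$ graded and $Q\subseteq P$, and the corresponding local rings are $T_{W_P^{-1}Q}\cong R_Q$. Since $Q\subseteq P$, $R_Q$ is a localization of $R_P$ at the prime $QR_P$. By locality, $R_P$ having $\cP$ implies $(R_P)_{QR_P}\cong R_Q$ has $\cP$. Thus every graded local ring of $T$ has $\cP$, so $T$ has $\cP$.

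The only step needing care, and the one I expect to be the main obstacle, is the description of the graded primes of $W_P^{-1}R$ and the Noetherian graded structure of $W_P^{-1}R$. $W_P^{-1}R$ is $\Z^n$-graded because $W_P$ consists of homogeneous elements, and it is Noetherian as a localization. Its graded primes correspond to graded primes of $R$ disjoint from $W_P$, by the standard correspondence for homogeneous localization. A graded prime of $R$ is disjoint from $W_P$ exactly when its homogeneous elements lie in $P$, which holds exactly when it is contained in $P$. I would verify this correspondence directly: a homogeneous element of $W_P^{-1}Q$ has the form $r/w$ with $r,w$ homogeneous, and the contraction of a graded prime of $T$ to $R$ is graded.
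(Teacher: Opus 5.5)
Your proof is correct and follows the paper's argument essentially step for step. In the easy direction you realize $R_P$ as the localization of $W_P^{-1}R$ at $W_P^{-1}P$. In the converse you identify the graded primes of $W_P^{-1}R$ as $W_P^{-1}Q$ with $Q\subseteq P$ graded, note that $(W_P^{-1}R)_{W_P^{-1}Q}\cong R_Q$ is a localization of $R_P$, and then invoke the graded-prime testing hypothesis. Your extra verification that contractions of graded primes of $W_P^{-1}R$ are graded supplies a detail the paper leaves implicit.
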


\begin{proof}
Set $S=W_P^{-1}R$. A local property is preserved under localization, so $S$ having $\cP$ implies that $R_P$ has $\cP$.
Conversely, suppose that $R_P$ has $\cP$. The ring $S$ is still $\Z^n$-graded. Every $\Z^n$-graded prime of 
$S$ is $W_P^{-1}Q$ for a $\Z^n$-graded prime $Q$ of $R$ disjoint from $W_P$. Such a $Q$ is contained in $P$: otherwise a homogeneous generator of $Q$ outside $P$ would belong to $Q\cap W_P$. For every such $Q$, the ring $S_{W_P^{-1}Q}\cong R_Q$
is a localization of $R_P$, and therefore has $\cP$. The $\Z^n$-graded-prime testing hypothesis 
now implies that $S$ has $\cP$.\end{proof}

\begin{remark}\label{Laur} Let $\p\in\Spec A$, let $F\in\Delta$, and put $R=A[\Delta]$ and $P_{\p,F}=\p R+(x_i:v_i\notin F)R$. Recall that $\Fx=\prod_{v_i\in F}x_i$. There is an isomorphism
\[
 R[1/\Fx]\cong A[\link_\Delta F][x_i^{\pm1}:v_i\in F].
\]
Indeed, after the variables indexed by $F$ 
are inverted, the non-face relations are precisely those of $\link_\Delta F$; the variables 
indexed by $F$ remain independent Laurent variables. 
See \cite[Lemma 2.1, Remark 2.2]{TeTr12}, \cite[Proof of Lemma 1.2]{TeYo09}, 
and \cite[\S6.3, p.\,224]{Vil15}, especially Exercises 6.3.56 and 6.3.57. 
This argument works over an arbitrary coefficient ring.

Write $R_\p=A_\p\otimes_A R$, so this notation localizes coefficients only. A nonzero multihomogeneous element of $R$ has the form $a\ux^{\ua}$; it is outside $P_{\p,F}$ exactly when $a\notin\p$ and its monomial support is contained in $F$. Consequently, if $W_{P_{\p,F}}$ denotes the homogeneous elements outside $P_{\p,F}$, then
\[ W_{P_{\p,F}}^{-1}R=R_\p[1/\Fx]
 \cong A_\p[\link_\Delta F][x_i^{\pm1}:v_i\in F].
\]
\end{remark}

\begin{cor} With notation as in Remark~\ref{Laur}, the following hold:
\benn
\item $R_{P_{\p,F}}$ satisfies 
\Serre{k} $\iff$ $R_\p[1/\Fx]$ satisfies \Serre{k} $\iff$ $A_\p[\link_\Delta F]$ satisfies \Serre{k}.
\item $\depth R_{P_{\p,F}}=\depth A_\p+\depth\kappa_\p[\link_\Delta F]$, where the last depth is taken at the ideal generated by the variables of the link.
\een
\end{cor}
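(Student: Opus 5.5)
Part (1) will be deduced from the theorems already recorded, namely Theorem~\ref{*} (through Corollary~\ref{p*}) and Remark~\ref{Laur}. Part (2) will come from Proposition~\ref{flatdp} applied to a flat local map.

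For (1), the first equivalence uses the corollary preceding Remark~\ref{Laur}. That corollary applies to a local property which may be tested at graded primes. \Serre{k} is such a property: it is local by definition, and Corollary~\ref{p*} shows that graded primes suffice for Noetherian $\Z^n$-graded rings. The corollary therefore says $R_{P_{\p,F}}$ has \Serre{k} if and only if $W_{P_{\p,F}}^{-1}R$ does. By Remark~\ref{Laur}, $W_{P_{\p,F}}^{-1}R = R_\p[1/\Fx]$. For the second equivalence, Remark~\ref{Laur} gives $R_\p[1/\Fx]\cong A_\p[\link_\Delta F][x_i^{\pm1}:v_i\in F]$. This is a Laurent polynomial ring over $A_\p[\link_\Delta F]$, so the last sentence of Proposition~\ref{serrek} finishes (1).

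For (2), I localize the isomorphism of Remark~\ref{Laur} further. The prime $P_{\p,F}$ corresponds to $\p A_\p + (x_j : v_j\in\link_\Delta F)$ in the Laurent ring, extended from the prime $Q:=\p A_\p + (x_j: v_j \in \link_\Delta F)$ of $B:=A_\p[\link_\Delta F]$. Adjoining Laurent variables and localizing at the extended prime is a flat local map whose closed fiber is $\kappa(Q)[x_i^{\pm1}]$ localized at $0$, which is a field. Proposition~\ref{flatdp} then gives $\depth R_{P_{\p,F}}=\depth B_Q$.

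Next, $A_\p \to B_Q$ is flat local, since $B$ is $A_\p$-free by Proposition~\ref{prop-ADeltabasics}(a). Its closed fiber is $\kappa_\p[\link_\Delta F]$ localized at the variable ideal. Applying Proposition~\ref{flatdp} again gives $\depth B_Q=\depth A_\p+\depth \kappa_\p[\link_\Delta F]_{(\ux)}$. This is the same argument as Proposition~\ref{prop-ADeltabasics}(d), which could simply be cited with $(A_\p,\p A_\p)$ in place of $(A,\n)$. Finally, graded depth at the homogeneous maximal ideal equals local depth there.

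The step needing the most care is the identification of the prime: one must check that $P_{\p,F}$ corresponds under Remark~\ref{Laur} to the extension of $Q$, and that $Q$ is indeed a prime of $B$. This reduces to the description of homogeneous elements outside $P_{\p,F}$ given in that remark. The remaining steps are direct citations.
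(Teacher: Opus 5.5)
Your proof is correct. Part (1) follows the paper's argument exactly. In part (2) you decompose the flat map differently, which is a modest but real difference.

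\textbf{The paper's route.} It applies Proposition~\ref{flatdp} once, to the flat local map $A_\p\to R_{P_{\p,F}}$. The closed fiber is $T[\link_\Delta F]$ localized at the link variables, where $T=\kappa_\p(x_i:v_i\in F)$. The paper then needs the separate fact that the depth of a Stanley--Reisner ring is unchanged by the field extension $\kappa_\p\subseteq T$, which it justifies via Hochster's formula.

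\textbf{Your route.} You factor the map through $B_Q$, with $B=A_\p[\link_\Delta F]$. The purely transcendental extension is then absorbed into the closed fiber $\kappa_\p(x_i:v_i\in F)$ of $B_Q\to R_{P_{\p,F}}$, which is a field of depth $0$. The remaining step $A_\p\to B_Q$ is exactly Proposition~\ref{prop-ADeltabasics}(d) over the local ring $A_\p$.

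\textbf{Comparison.} Your version avoids any change of coefficient field and any appeal to Hochster's formula. The price is checking that $P_{\p,F}$ corresponds to $QB[x_i^{\pm1}:v_i\in F]$, the step you flagged. That check holds: the $x_j$ with $F\cup\{v_j\}\notin\Delta$ become $0$ once $\Fx$ is inverted. Also $B/Q\cong\kappa_\p$ and $B[x_i^{\pm1}]/QB[x_i^{\pm1}]\cong\kappa_\p[x_i^{\pm1}]$, so $Q$ is maximal and its extension is prime.
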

\begin{proof}
Serre conditions are local and are detected at graded primes by Corollary~\ref{p*}. The preceding corollary and Remark~\ref{Laur} therefore yield the first equivalence in (1), while Proposition~\ref{serrek} removes the Laurent variables.

For (2), the map $A_\p \to R_{P_{\p,F}}$ is flat and local. Let $T=\kappa_\p(x_i:v_i\in F)$ 
and let $\n$ denote the ideal generated by the variables of $\link_\Delta F$. Its closed fiber is
${R_{P_{\p,F}}/\p R_{P_{\p,F}}  \cong T[\link_\Delta F}]_{\n}$.
The depth of this local ring is $\depth\kappa_\p[\link_\Delta F]$: extension of the coefficient field preserves vanishing of the graded local cohomology modules, for example by Hochster's formula. Proposition~\ref{flatdp} proves (2). The flat local dimension formula also gives
$\dim R_{P_{\p,F}}=\height\p+\dim\link_\Delta F+1$, as in Proposition~\ref{prop-ADeltabasics}(c).
\end{proof}

\begin{cor}\label{testlink} 
Assume that $k\geq 2$. Then $A[\Delta]$ has \Serre{k} if and only if $A$ has \Serre{k} and $\tH_j(\link_\Delta F; \kappa_{\p}) = 0$ for all $F \in \Delta$, $\p \in \Spec A$ with $\height \p < k$, and $j < \min\{k-1 - \height \p,\; \dim \Delta - \dim F - 1\}.$ \end{cor}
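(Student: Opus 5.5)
By Corollary~\ref{p*}, $A[\Delta]$ has \Serre{k} if and only if the depth inequality holds at every $\Z^n$-graded prime. By Proposition~\ref{prop-ADeltabasics}(b) these primes are exactly the $P_{\p,F}=\p+P_F$ with $\p\in\Spec A$ and $F\in\Delta$. So the plan is to fix such a prime and make the inequality
\[
\depth R_{P_{\p,F}}\geq\min\{k,\height P_{\p,F}\}
\]
explicit using the preceding corollary. Part (2) of that corollary gives $\depth R_{P_{\p,F}}=\depth A_\p+\depth\kappa_\p[\link_\Delta F]$. Proposition~\ref{prop-ADeltabasics}(c) gives $\height P_{\p,F}=\height\p+\dim\link_\Delta F+1$. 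Write $e=\depth A_\p$, $h=\height\p$, $\Gamma=\link_\Delta F$ and $\delta=\dim\Gamma+1=\dim\kappa_\p[\Gamma]$. The condition then reads
\[
e+\depth\kappa_\p[\Gamma]\geq\min\{k,h+\delta\}.
\]

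For necessity, first take $F$ a facet. Then $\Gamma=\{\nl\}$, $\delta=0$, and the condition is $e\geq\min\{k,h\}$, which is \Serre{k} for $A$. (Alternatively, this follows from Proposition~\ref{serrek}, since $A\to A[\Delta]$ is faithfully flat.) Next take $\p$ with $h<k$. Since $e\leq h<k$, the condition forces $\depth\kappa_\p[\Gamma]\geq\min\{k-e,h+\delta-e\}\geq\min\{k-h,\delta\}$. Hochster's formula (Theorem~\ref{thm-og-hochsform-localcohomver}) over the field $\kappa_\p$ expresses this depth bound on $\kappa_\p[\Gamma]$ as vanishing of $\tH^{i-|G|-1}(\link_\Gamma G;\kappa_\p)$ for all $i<\min\{k-h,\delta\}$ and all $G\in\Gamma$. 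Since $\link_\Gamma G=\link_\Delta(F\cup G)$, letting $F$ range over all faces reduces everything to the case $G=\nl$, which is
\[
\tH_j(\link_\Delta F;\kappa_\p)=0\quad\text{for } j<\min\{k-1-h,\ \dim\link_\Delta F\}.
\]
Over a field, homology and cohomology have equal dimensions. One caveat: the statement writes $\dim\Delta-\dim F-1$ instead of $\dim\link_\Delta F$. These agree when $\Delta$ is pure, and I expect purity to be forced by \Serre{2}: Terai's theorem at $F=\nl$ requires connectivity of links, and connectivity of all links of dimension at least one gives purity. I will verify this via the residue field case of Theorem~\ref{thm-terai}, applied to a height-$0$ prime $\p$, where $\kappa_\p[\Delta]$ inherits \Serre{2} as a fiber localization.

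For sufficiency, suppose $A$ has \Serre{k} and the homology vanishing holds. If $h\geq k$, then $e\geq k$ by \Serre{k} for $A$, and the inequality is automatic. If $h<k$, then $e\geq h$ (since $e\geq\min\{k,h\}$), so $A_\p$ is Cohen--Macaulay. The vanishing hypotheses, applied to all faces containing $F$, give $\depth\kappa_\p[\Gamma]\geq\min\{k-h,\delta\}$. Here Hochster's formula is used again, and the fact that links of links are links ensures the hypothesis covers every $G$, with the index ranges matching after purity. Adding gives $e+\depth\kappa_\p[\Gamma]\geq\min\{k,h+\delta\}$.

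I expect the main obstacle to be the bookkeeping: matching the range $\min\{k-1-h,\dim\Delta-\dim F-1\}$ against the depth characterization in terms of links of larger faces, and establishing purity so that $\dim\link_\Delta F=\dim\Delta-\dim F-1$. I would first try to deduce purity from the $h=0$ case combined with Theorem~\ref{thm-terai} over $\kappa_\p$.
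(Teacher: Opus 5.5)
Your proof is correct, and its first half coincides with the paper's. Both arguments reduce to the graded primes $P_{\p,F}$. Both use $\depth R_{P_{\p,F}}=\depth A_\p+\depth\kappa_\p[\link_\Delta F]$ together with the height formula, dispose of $\height\p\ge k$ using \Serre{k} for $A$, and reduce the case $\height\p<k$ to $\depth\kappa_\p[\link_\Delta F]\ge\min\{k-\height\p,\dim\link_\Delta F+1\}$.

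The routes diverge in the final translation.

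\textbf{The paper's route.} It observes that, as $F$ varies, these inequalities say exactly that $\kappa_\p[\Delta]$ satisfies \Serre{k-\height\p}. It then quotes Theorem~\ref{thm-terai} when $k-\height\p\ge2$. The case $k-\height\p=1$ needs a separate remark: purity is forced at a minimal prime, after which that case is automatic.

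\textbf{Your route.} You unwind the inequalities directly with Hochster's formula and $\link_{\link_\Delta F}G=\link_\Delta(F\cup G)$, much as the paper later does in proving Theorem~\ref{thm-tSR-terai}. This treats all values of $k-\height\p$ uniformly. The price is that it yields the range $\min\{k-1-\height\p,\dim\link_\Delta F\}$ rather than the stated one. So you invoke Terai's theorem only once, at a minimal prime, and solely to obtain purity.

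That step works, though "inherits as a fiber localization" deserves its one-line justification. Because $A_\p$ is Artinian, the flat local depth and dimension formulas give localizations of $A_\p[\Delta]$ and of $\kappa_\p[\Delta]$ the same depth and dimension. Hence \Serre{2} passes to $\kappa_\p[\Delta]$. A facet $F$ with $\dim F<\dim\Delta$ would then put $\tH_{-1}(\{\nl\};\kappa_\p)\neq0$ inside Terai's forbidden range, a contradiction.

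\textbf{Sufficiency.} You write that the ranges match "after purity," but purity is not actually needed there. Since $\dim\link_\Delta F\le\dim\Delta-\dim F-1$ for every face, the hypothesis at $F\cup G$ already covers the range $j<\min\{k-\height\p-|G|-1,\dim\link_\Delta F-|G|\}$ that Hochster's formula requires. In any case, the hypothesis at a facet and a minimal prime forces purity directly.

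In short, the paper's proof is shorter because it uses Terai's theorem as a black box for each $\p$. Yours essentially re-derives Terai's criterion in the form needed and avoids the special case $k-\height\p=1$.
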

\begin{proof}
Let $R := A[\Delta]$. The map $A\to R$ is faithfully flat, since the monomials supported on faces form an 
$A$-basis that includes $1$. Thus $R$ having \Serre{k} implies that $A$ has \Serre{k} by Proposition~\ref{serrek}.

Assume now that $A$ has \Serre{k}. For $\p\in\Spec A$, write $h=\height\p$ and $d_F=\dim\link_\Delta F+1$. At the graded prime $P_{\p,F}$, the preceding corollary gives
\[\depth R_{P_{\p,F}}
 =\depth A_\p+\depth\kappa_\p[\link_\Delta F],
 \qquad \dim R_{P_{\p,F}}=h+d_F.
\]
If $h\geq k$, the required inequality follows from $\depth A_\p\geq k$. If $h<k$, then $\depth A_\p=h$, and it becomes
\[\depth\kappa_\p[\link_\Delta F]
 \geq\min\{k-h,d_F\}.
\]
As $F$ varies, these inequalities are equivalent to \Serre{k-h} for $\kappa_\p[\Delta]$: localizing a Stanley--Reisner ring at the prime complementary to $F$ has the displayed link depth and dimension, and Corollary~\ref{p*} detects Serre conditions at these primes. Consequently, $R$ has \Serre{k} if and only if $A$ has \Serre{k} and $\kappa_\p[\Delta]$ has \Serre{k-h} for every $\p$ with $h<k$.

For $k-h\geq2$, Theorem~\ref{thm-terai} translates this last condition exactly into the stated homology vanishing. To handle $k-h=1$, observe first that a minimal prime of the nonzero ring $A$ has $h=0$. In either direction of the proposed equivalence, the corresponding field criterion forces $\Delta$ to be pure: if $F$ were a facet of dimension less than $\dim\Delta$, then $\link_\Delta F=\{\nl\}$ and its reduced homology in degree $-1$ would violate that criterion. For a pure $\Delta$, the bounds with $k-h=1$ require only the automatic vanishing in degrees below $-1$, and the vanishing in degree $-1$ for nonempty links with a vertex. Thus they hold automatically, just as \Serre{1} holds for every Stanley--Reisner ring over a field. This proves both implications.
\end{proof}

\begin{remark}
The \Serre{k} condition for a Stanley--Reisner ring $\kappa[\Sigma]$ over a field depends only on the characteristic of $\kappa$, because its link cohomology is obtained from cohomology over the prime field by extension of scalars. To apply Corollary~\ref{testlink}, one must first check that $A$ has \Serre{k}. For each characteristic $c$ occurring among its residue fields, let
\[h_c=\min\{\height\p:\p\in\Spec A,\ \op{char}\kappa_\p=c\},
 \qquad
 \kappa_c=\begin{cases}\Q,&c=0,\\ \mathbb F_c,&c>0.\end{cases}
\]
Only characteristics with $h_c<k$ need be considered. For each of these, it suffices to test
\[\simpHomol^j(\link_\Delta F;\kappa_c)=0
 \quad\text{for }j<\min\{k-1-h_c,\dim\Delta-\dim F-1\}
\]
for every $F\in\Delta$, since this is the strongest range for that characteristic.
\end{remark}

\subsection{Serre conditions for \texorpdfstring{$t$}{t}-Stanley-Reisner rings} 

We give criteria for \tSR rings to satisfy \Serre{k}, analogous to Terai's
criterion for \SR rings over a field. 

\begin{notation}\label{not5} Throughout this subsection, as usual, let
$(V,\,tV,\, K)$ be a DVR with fraction field $L$, let
$\Delta \neq \Nl$ be a finite
abstract simplicial complex with vertices in $\{v_0, \, \vect v n\}$, and 
let $\ux = \vect xn$.  Let $R := V\ov{[\Delta]}$ be a \tSR ring and let $\m$ 
denote the maximal ideal $(t, \ux)R$.  We use $\Gamma$ to denote a variable 
finite abstract simplicial complex, and $\cK$ to denote a variable field. \par

In these definitions it will be convenient to let 
$y_0 :=t$ and $y_i:= x_i$, $1 \leq i \leq n$.
Recall the notation $\tmonomF{F} = \phi(\prod_{v_i \in F} x_i)$ and $\ov{P_F} = (\phi(x_i) : v_i \notin F)R$, where $\phi$ is evaluation $x_0 \mapsto t$.
By Proposition~\ref{prop-tSRbasics}(b),  the $\ov{P_F}$ are precisely the \mlt prime ideals of $R$.
Moreover, we let $\Gamma_F :=\link_{\Delta}F$. By Proposition~\ref{prop-tSRbasics}, we have 
$\height{\ov{P_F}} = \dim(\Gamma_F)+1$.

Given any local, or \SR, or \tSR ring $S$, we use $\depth(S)$ to denote the depth of
$S$ on its maximal or homogeneous maximal ideal.
\end{notation}

We need the following depth calculation:

\begin{prop} Let notation be as in (\ref{not5}). Then: $\depth R_{\ov{P_F}}=
\begin{cases}
\depth K[\Gamma_F],&v_0\notin F,\\[2mm]
\depth L[\Gamma_F],&v_0\in F.
\end{cases}
$
\end{prop}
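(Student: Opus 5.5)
We compute the localization of $R$ at $\ov{P_F}$ by first inverting the homogeneous elements outside $\ov{P_F}$, mirroring Remark~\ref{Laur}, and then treat the two cases $v_0\notin F$ and $v_0\in F$ separately. Let $W$ be the set of $\Z^n$-homogeneous elements of $R$ not in $\ov{P_F}$. Up to units of $V$, these are $t$-monomials supported in $F$. Hence $W^{-1}R=R[1/\tmonomF{F}]$ when $v_0\in F$. When $v_0\notin F$, $t\in\ov{P_F}$ and $W^{-1}R=R[1/\monomF{F}]$. Then $R_{\ov{P_F}}$ is a further localization of $W^{-1}R$ at its homogeneous maximal ideal. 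In both cases depth can be read off from the graded ring: $R_{\ov{P_F}}$ is the localization of $W^{-1}R$ at the prime generated by the $y_i$ with $v_i\notin F$.

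In the case $v_0\notin F$, we argue that $t$ kills $R[1/\monomF{F}]$ unless $F\cup\{v_0\}\in\Delta$. More systematically, apply Proposition~\ref{prop-tSR-localization-components} degree by degree. This gives
\[R[1/\monomF{F}]\cong V\ov{[\Gamma_F]}[x_i^{\pm1}:v_i\in F],\]
the analogue of Remark~\ref{Laur}. Indeed, after inverting $\monomF{F}$, the relations $\tmonomF{G}$ for non-faces $G$ become those of $\link_\Delta F$, on the vertices other than those of $F$, including $v_0$. Hence $R_{\ov{P_F}}$ is a localization at the homogeneous maximal ideal $(t,x_i:v_i\notin F)$ of a Laurent extension of $V\ov{[\Gamma_F]}$. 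The Laurent variables are units outside this maximal ideal, so it is $V\ov{[\Gamma_F]}\otimes_K$-like over a purely transcendental residue field extension. More precisely, the map $V\ov{[\Gamma_F]}_{\m}\to R_{\ov{P_F}}$ is flat and local with closed fiber a field, namely $K(x_i:v_i\in F)$. So Proposition~\ref{flatdp} gives $\depth R_{\ov{P_F}}=\depth V\ov{[\Gamma_F]}$. Proposition~\ref{prop-tSRbasics}(d) then gives $\depth V\ov{[\Gamma_F]}=\depth K[\Gamma_F]$.

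In the case $v_0\in F$, $t$ becomes a unit in $W^{-1}R$, so $W^{-1}R$ is an $L$-algebra. We check that
\[R[1/\tmonomF{F}]\cong L[\Gamma_F][x_i^{\pm1}:v_i\in F\sm\{v_0\}].\]
The non-face relations $\tmonomF{G}$ with $v_0\in G$ lose their factor $t$, and those with $v_0\notin G$ become redundant or link relations. This mirrors Remark~\ref{Laur} applied to $V[\Delta][1/x_0]$ modulo $x_0-t$. That is the cleanest route: $R[1/t]\cong V[\Delta][1/x_0]/(x_0-t)$, and $V[\Delta][1/x_0]\cong V[\link_\Delta v_0][x_0^{\pm1}]$, so $R[1/t]\cong L[\link_\Delta v_0]$. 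Now invert the remaining variables of $F$ via Remark~\ref{Laur} over $A=L$, noting that $\link_{\link v_0}(F\sm v_0)=\Gamma_F$. Localizing at the ideal of variables of $\Gamma_F$, the same flat-local argument with field fiber gives $\depth R_{\ov{P_F}}=\depth L[\Gamma_F]$.

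The main obstacle is the first case, namely verifying the Laurent-extension isomorphism for the $t$-Stanley–Reisner ring, where $t$ is not inverted. I would do this by comparing graded components using Proposition~\ref{prop-tSR-localization-components} with $H_\ua$ and $G_\ua\subseteq F$: the conditions "$H_\ua\cup F\in\Delta$" and "$v_0\cup H_\ua\cup F\in\Delta$" translate to $H_\ua\in\Gamma_F$ and $H_\ua\in\link_{\Gamma_F}v_0$. These match Proposition~\ref{prop-tSRbasics}(a) for $V\ov{[\Gamma_F]}$. Compatibility with multiplication is immediate, since both sides are quotients of a Laurent polynomial ring by the corresponding $t$-monomials.
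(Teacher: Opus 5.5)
Your proposal is correct and follows the paper's proof. In both, you invert $\tmonomF{F}$ (equivalently, the homogeneous elements outside $\ov{P_F}$), identify the localization with a Laurent extension of $V\ov{[\Gamma_F]}$ or of $L[\Gamma_F]$, apply the flat local depth formula (Proposition~\ref{flatdp}) with a field as closed fiber, and finish with Proposition~\ref{prop-tSRbasics}(d) when $v_0\notin F$. The differences are cosmetic: your single formula for $v_0\notin F$ absorbs the paper's separate case $F\cup\{v_0\}\notin\Delta$ (there $\{v_0\}\notin\Gamma_F$, so $t$ is a defining relation and $V\ov{[\Gamma_F]}=K[\Gamma_F]$). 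Also, since $H_\ua$ may meet $F$, your component comparison should use $H_\ua\sm F\in\Gamma_F$ and $H_\ua\sm F\in\link_{\Gamma_F}\{v_0\}$ rather than $H_\ua\in\Gamma_F$ and $H_\ua\in\link_{\Gamma_F}\{v_0\}$.
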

\begin{proof} First invert the nonzero $t$-monomial $\tmonomF{F}$. If $G\notin\Delta$, then, 
after $\tmonomF{F}$ is inverted, $\tmonomF{G}$ is a unit multiple of $\tmonomF{G \sm  F}$. Conversely, if 
$H$ is disjoint from $F$ and $H\notin\Gamma_F$
then $F\cup H\notin\Delta$, and $\tmonomF{H}=\tmonomF{F\cup H}/\tmonomF{F}$
in the localization. It follows that the defining ideal after localization 
is generated precisely by the non-faces of  $\Gamma_F$.
There are three cases.

Case 1: $v_0\notin F$ and $F\cup\{v_0\}\in\Delta$.
Here $v_0$ is a vertex of $\Gamma_F$,  and
$(*) \quad R_{\tmonomF{F}} \cong V\ov{[\Gamma_F]} [x_i^{\pm1}:v_i\in F]$. 
Let $\n _F$ denote the homogeneous maximal ideal of
$V\ov{[\Gamma_F]}$. By $(*)$,   
$\ov{P_F}R_{\tmonomF{F}}= \n _FV\ov{[\Gamma_F]}[x_i^{\pm1}:v_i\in F].$
The map $V\ov{[\Gamma_F]}_{\n _F}\to R_{\ov{P_F}}$
is flat  local, and its closed fiber is
$K(x_i:v_i\in F)$. Consequently, the flat 
local depth formula from Proposition~\ref{flatdp} gives  
$\depth R_{\ov{P_F}} =\depth V\ov{[\Gamma_F]}_{\n _F}$.
By Proposition~\ref{prop-tSRbasics}(d),
$\depth V\ov{[\Gamma_F]}_{\n _F} = \depth K[\Gamma_F].$

Case 2: $v_0\notin F$ and $F\cup\{v_0\}\notin\Delta$.
In this case, $t\tmonomF{F}=0$
in $R$. Since $\tmonomF{F}$ becomes a unit, $t=0$ in $R_{\tmonomF{F}}$. Hence
$\quad R_{\tmonomF{F}} \cong K[\Gamma_F][x_i^{\pm1}:v_i\in F].$  
Localizing this isomorphism at the prime corresponding to $\ov{P_F}$, 
and again using  the flat local depth formula with field fiber, shows that
$\depth R_{\ov{P_F}}=\depth K[\Gamma_F].$
Therefore, both cases with $v_0\notin F$ give the asserted depth.

Case 3: $v_0\in F$.
Then $R_{\tmonomF{F}} \cong L[\Gamma_F][x_i^{\pm1}:v_i\in F \sm \{v_0\}]$
because $\tmonomF{F}$ is divisible by $t$, so that $t$ becomes invertible. 
The corresponding flat local map from the localization of $L[\Gamma_F]$ has closed fiber
that is a field, and therefore $\depth R_{\ov{P_F}}=\depth L[\Gamma_F]$.  \end{proof}

We next use this result to characterize the \Serre{k} condition for \tSR rings in terms of the 
behavior of \SR rings, and
then give a topological version.

\begin{thm}\label{SkChar} For every $k\geq0$, $R$ satisfies \Serre{k}
if and only if the following two conditions hold:
\benn
\item  For every $F\in\Delta$ with $v_0\notin F$, 
$\depth K[\link_{\Delta}F] \geq \min\{k,\dim\link_{\Delta}F+1\}$.  

\item For every $F\in\Delta$ with $v_0 \in F$, $\depth L[\link_{\Delta}F]\geq
\min\{k,\dim\link_{\Delta}F+1\}.$  
\een
\end{thm}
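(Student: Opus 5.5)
By Corollary~\ref{p*}, $R$ satisfies \Serre{k} if and only if $\depth R_P\geq\min\{k,\height P\}$ for every $\Z^n$-graded prime $P$ of $R$. By Proposition~\ref{prop-tSRbasics}(b), these graded primes are exactly the $\ov{P_F}$ with $F\in\Delta$. So the plan is to reduce the Serre condition to a family of inequalities, one for each face $F\in\Delta$, and then rewrite each side in terms of the link $\Gamma_F=\link_\Delta F$.

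The first step is the heights. Proposition~\ref{prop-tSRbasics}(c) gives
\[\height\ov{P_F}=\dim\Gamma_F+1.\]

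The second step is the depths. The depth proposition immediately preceding the statement gives
\[\depth R_{\ov{P_F}}=\depth K[\Gamma_F]\ \text{ if } v_0\notin F,\qquad \depth R_{\ov{P_F}}=\depth L[\Gamma_F]\ \text{ if } v_0\in F.\]
Substituting both formulas into the inequality $\depth R_{\ov{P_F}}\geq\min\{k,\height\ov{P_F}\}$ yields condition (1) for faces with $v_0\notin F$ and condition (2) for faces with $v_0\in F$. Hence the family of graded-prime inequalities is equivalent to the conjunction of (1) and (2), and both directions of the theorem follow at once.

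The main point needing care is that Corollary~\ref{p*} legitimately applies. First, $R$ must be a Noetherian $\Z^n$-graded ring, which holds since $t$ has degree $\Zo$. Second, the depth proposition must be valid for every face, including the empty face and faces whose link is $\{\nl\}$. For the empty face, $\ov{P_\nl}=\m$, and Case~1 or Case~2 of that proposition applies according to whether $v_0\in\Delta$. For a face with $\Gamma_F=\{\nl\}$, both sides are $0$, so the inequality holds trivially. Finally, for $k=0$ every condition is vacuous on both sides, so the equivalence holds in that case as well.
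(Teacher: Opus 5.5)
Your proposal is correct and follows the paper's proof exactly: you reduce to the graded primes $\ov{P_F}$ via Corollary~\ref{p*}, then substitute $\height\ov{P_F}=\dim\link_\Delta F+1$ and the depth formula from the preceding proposition. Your extra checks (the empty face, links equal to $\{\nl\}$, and $k=0$) are sound; note that ``$v_0\in\Delta$'' should read $\{v_0\}\in\Delta$.
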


\begin{proof} By Corollary~\ref{p*}, $R$ satisfies \Serre{k} if and only if the required depth inequality holds at every graded prime. The graded primes are exactly the $\ov{P_F}$.
The height equality $\height\ov{P_F}=\dim\link_\Delta F+1$ and the preceding proposition
show that the \Serre{k} inequality at $\ov{P_F}$ is precisely (1) or (2), according as $v_0\notin F$ or $v_0\in F$. \end{proof}

Consequently:

\begin{cor} The ring $R$ satisfies \Serre{k} if and only if the following two conditions hold:
\benn
\item For every prime $Q$ of $K[\Delta]$ containing $x_0$,
$
 \depth K[\Delta]_Q\geq\min\{k,\height Q\}.
$
\item $L[\link_\Delta\{v_0\}]$ satisfies \Serre{k}.
\een
\end{cor}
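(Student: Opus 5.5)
The plan is to deduce the statement directly from Theorem~\ref{SkChar}. I will show that condition (1) here is equivalent to condition (1) of Theorem~\ref{SkChar}, and that condition (2) here is equivalent to condition (2) there.

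For (1), apply Proposition~\ref{prop-ADeltabasics} and the corollary following Remark~\ref{Laur} with $A=K$ and $\p=0$. These show that the $\Z^{n+1}$-graded primes of $K[\Delta]$ are the $P_F$ with $F\in\Delta$. They also give
\[
\depth K[\Delta]_{P_F}=\depth K[\link_\Delta F]
\quad\text{and}\quad
\height P_F=\dim\link_\Delta F+1.
\]
Such a prime contains $x_0$ exactly when $v_0\notin F$. So the inequalities of (1) at graded primes containing $x_0$ are exactly condition (1) of Theorem~\ref{SkChar}.

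It remains to see that testing (1) at graded primes suffices. I repeat the argument of Corollary~\ref{p*}, restricted to primes containing $x_0$. Suppose the inequality fails at some prime $Q\ni x_0$. By Theorem~\ref{*}, it then also fails at $Q^*$. Since $x_0$ is homogeneous, $x_0\in Q^*$, so the failure occurs at a graded prime containing $x_0$. This is the one point that needs care: Corollary~\ref{p*} itself speaks only about the whole ring, so the restriction to primes containing $x_0$ must be checked by hand. The check goes through verbatim because $Q^*$ is generated by homogeneous elements of $Q$.

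For (2), put $\La=\link_\Delta\{v_0\}$. If $v_0\notin\Delta$, then $\La=\Nl$ and $L[\La]$ is the zero ring, which satisfies \Serre{k} vacuously. In that case there are no faces containing $v_0$, so condition (2) of Theorem~\ref{SkChar} is also vacuous.

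Otherwise, the map $F\mapsto G=F\sm\{v_0\}$ is a bijection from faces of $\Delta$ containing $v_0$ to faces of $\La$. Under it, $\link_\Delta F=\link_\La G$. By Corollary~\ref{p*} together with the corollary after Remark~\ref{Laur} (taking $A=L$, $\p=0$, $\depth A_\p=0$), $L[\La]$ satisfies \Serre{k} if and only if
\[
\depth L[\link_\La G]\geq\min\{k,\dim\link_\La G+1\}
\quad\text{for all } G\in\La.
\]
This is exactly condition (2) of Theorem~\ref{SkChar}. Combining the two equivalences with Theorem~\ref{SkChar} proves the corollary.
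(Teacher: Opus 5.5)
Your proposal is correct and follows essentially the same route as the paper: you identify the graded primes of $K[\Delta]$ containing $x_0$ with the faces omitting $v_0$, reduce condition (1) to those primes by rerunning the argument of Corollary~\ref{p*} (using that $x_0\in Q^*$), and match condition (2) with condition (2) of Theorem~\ref{SkChar} via $\link_{\link_\Delta\{v_0\}}(F\sm\{v_0\})=\link_\Delta F$ and the graded-prime criterion over $L$. Like the paper, you treat the case where $v_0$ is not a vertex of $\Delta$ separately; the paper's proof is a compressed version of exactly these steps.
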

\begin{proof}
The graded primes of $K[\Delta]$ containing $x_0$ correspond exactly to faces $F$ omitting $v_0$. Their depths and heights are $\depth K[\link_\Delta F]$ and $\dim\link_\Delta F+1$. If a prime $Q$ containing $x_0$ fails the displayed inequality, its graded part $Q^*$ still contains $x_0$ and fails the inequality by the proof of Corollary~\ref{p*}. Thus (1) is equivalent to condition (1) of Theorem~\ref{SkChar}. Every face containing $v_0$ has the form $F=\{v_0\}\cup H$ with $H\in\link_\Delta\{v_0\}$, and
\[\link_{\link_\Delta\{v_0\}}H=\link_\Delta F.
\]
The graded-prime criterion over $L$ therefore identifies (2) with condition (2) of Theorem~\ref{SkChar}. If $v_0$ is not a vertex of $\Delta$, there are no such faces and the ring in (2) is the zero ring, so this condition is vacuous.
\end{proof}

\begin{lemma}\label{cKG} Let $\cK$ be a field, let $\Gamma \neq \Nl$ be a simplicial complex, and let $\n$ be the homogeneous maximal ideal of $\cK[\Gamma]$. For  $s \in \N$, the following are equivalent:
\benn
\item $\depth_{\n }\cK[\Gamma]\geq s$ and  

\item  for every  $H \in \Gamma$ and every 
$j < s - |H| - 1$, $\simpHomol^j(\link_{\Gamma} H; \, \cK)=0$. 
\een
\end{lemma}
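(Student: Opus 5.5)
The plan is to use the fact that depth equals the smallest index of a nonvanishing local cohomology module: $\depth_{\n}\cK[\Gamma]\geq s$ if and only if $H^i_{\n}(\cK[\Gamma])=0$ for all $i<s$. We then read off these modules from Hochster's formula, Theorem~\ref{thm-og-hochsform-localcohomver}, applied with $A=\cK$. It states that $[H^i_{\n}(\cK[\Gamma])]_{\ua}$ vanishes unless $\ua\leq\Zo$. When $\ua\leq\Zo$ it equals $\tH^{i-|G_{\ua}|-1}(\link_\Gamma G_{\ua};\cK)$, which is understood to be zero if $G_{\ua}\notin\Gamma$ because the link is then $\Nl$. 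A graded module is zero exactly when all of its graded components are zero. Every face $H\in\Gamma$ occurs as $G_{\ua}$: take $a_i=-1$ for $v_i\in H$ and $a_i=0$ otherwise. Hence
\[
H^i_{\n}(\cK[\Gamma])=0 \iff \tH^{i-|H|-1}(\link_\Gamma H;\cK)=0 \text{ for every } H\in\Gamma .
\]

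Next, substitute $j=i-|H|-1$. For a fixed face $H$, the indices $i<s$ correspond exactly to the indices $j<s-|H|-1$. Therefore the vanishing of $H^i_{\n}$ for all $i<s$ is equivalent to the vanishing of $\tH^j(\link_\Gamma H;\cK)$ for all $H\in\Gamma$ and all $j<s-|H|-1$. This is condition (2), and it proves (1)$\iff$(2).

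Two bookkeeping points need checking.
\begin{itemize}
\item The local cohomology must be taken with respect to $\n=(\ux)$. This is exactly the setting of Theorem~\ref{thm-og-hochsform-localcohomver}.
\item The degenerate cases must be handled correctly. If $s=0$, both conditions are vacuous. If $s$ exceeds $\dim\cK[\Gamma]$, then (1) fails, so (2) must fail as well. It does: choosing a facet $H$ of maximal dimension gives $\link_\Gamma H=\{\nl\}$, whose $\tH^{-1}$ is $\cK\neq0$, and the degree $j=-1$ lies in the tested range. This is where Remark~\ref{0} is needed, to distinguish $\{\nl\}$ from $\Nl$.
\end{itemize}
The depth characterization via the first nonvanishing local cohomology module requires $\cK[\Gamma]\neq0$, which holds because $\Gamma\neq\Nl$. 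It also requires working in the graded local setting, where depth on $\n$ coincides with the depth of the localization at $\n$; this is standard.

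The main obstacle is only to make sure the index shift and the empty-face conventions are consistent, including the case $H=\nl$, where $\link_\Gamma\nl=\Gamma$ and the condition is $\tH^j(\Gamma;\cK)=0$ for $j<s-1$. No topological input beyond Hochster's formula is needed.
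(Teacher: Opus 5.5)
Your proposal is correct and follows essentially the same route as the paper's proof. Both arguments characterize depth by the vanishing of $H^i_{\n}$ for $i<s$, read off the graded components from Hochster's formula, realize each face $H$ as a negative support, and substitute $j=i-|H|-1$. The paper states explicitly one detail that you leave implicit: degrees $j<-1$, including those coming from $i<0$, vanish automatically on both sides. This is what makes your direct identification of the index ranges legitimate.
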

\begin{proof}
The inequality in (1) is equivalent to $\Homol^i_\n(\cK[\Gamma])=0$ for every $i<s$. Hochster's formula says that the graded components with a positive coordinate vanish, and that, for $\ua\leq\Zo$ with negative support $H\in\Gamma$,
\[
 [\Homol^i_\n(\cK[\Gamma])]_\ua
 \cong\simpHomol^{i-|H|-1}(\link_\Gamma H;\cK).
\]
Components whose negative support is not a face also vanish. Thus (2) implies (1). Conversely, for $j\geq-1$ satisfying $j<s-|H|-1$, set $i=j+|H|+1$ and choose a multidegree with negative support $H$ and no positive coordinates. Then $0\leq i<s$, so (1) forces the 
required cohomology group to vanish. Degrees $j<-1$ vanish automatically, proving (2).
\end{proof}

We are now ready to prove the main result of this subsection, which characterizes the \Serre{k}
condition for \tSR rings, and which may reasonably be called the {\it $t$-Terai criterion}.

A simplicial complex has \emph{pure dimension} $d$ if every facet has dimension $d$. The purity needed below follows from \cite[Cor.\,(5.10.9), p.\,117]{EGA}: a catenary Noetherian local ring satisfying \Serre{2} is equidimensional. Indeed, a DVR is Cohen--Macaulay and hence universally catenary, and its finitely generated algebras and their localizations are catenary. For a facet $F$ of $\Delta$, the corresponding component of $V[\Delta]$ at its homogeneous maximal ideal has dimension $|F|+1$. The corresponding component of $V\ov{[\Delta]}$ at $\m$ has dimension $|F|$: its quotient is a polynomial ring over $V$ in $|F|-1$ variables if $v_0\in F$, and over $K$ in $|F|$ variables if $v_0\notin F$. All these minimal primes lie in the respective homogeneous maximal ideal. Hence, if either ring satisfies \Serre{k} for $k\geq2$, equidimensionality of that localization forces all facets of $\Delta$ to have the same dimension.

\begin{thm}\label{thm-tSR-terai}
Assume that $\Delta$ is a finite abstract simplicial complex of dimension $d$ and that $k\geq2$. Then $R=V\ov{[\Delta]}$ satisfies \Serre{k} if and only if $\Delta$ is pure and the following conditions hold for all faces $F$ of $\Delta$,  including the empty face:

\benn
\item For every $F\in\Delta$ with $v_0\notin F$, 
$\tH^j(\link_\Delta F;K)=0$ for all $j<\min\{k-1,d-\dim F-1\}$.  

\item For every $F \in \Delta$ with $v_0\in F$,
   \beni
   \item $\tH^j(\link_\Delta F;K)=0$ for all $j < \min\{k-2,d-\dim F-1\}$  and   
    \item  $\tH^j(\link_\Delta F;K)=0$ for $j<\min\{k-2,d-\dim F-1\}$.  
\een
\een
Since $\Delta$ is pure, $d-\dim F-1 = \dim\link_\Delta F$, and so the 
second entries in the minimums of both (i) and (ii) may be replaced by $\dim\link_\Delta F$.
\end{thm}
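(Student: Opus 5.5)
The plan is to reduce everything to Theorem~\ref{SkChar} and then translate each depth condition into link cohomology using Lemma~\ref{cKG}. I read condition~(2) in the form given in the Introduction: vanishing over $L$ for $j<\min\{k-1,\dim\link_\Delta F\}$, and over $K$ for $j<\min\{k-2,\dim\link_\Delta F\}$.

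\textbf{Purity.} By the discussion preceding the theorem, \Serre{2} for $R$ forces $\Delta$ to be pure. Conversely, purity is part of the hypothesis in the other direction. So I assume throughout that $\Delta$ is pure of dimension $d$. Then $\dim\link_\Delta F=d-|F|$ for every face $F$, and for $H\in\link_\Delta G$ we have
\[
\link_{\link_\Delta G}H=\link_\Delta(G\cup H).
\]

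\textbf{Expanding condition~(1) of Theorem~\ref{SkChar}.} Fix $G\in\Delta$ with $v_0\notin G$. Apply Lemma~\ref{cKG} to $\Gamma=\link_\Delta G$ with $s=\min\{k,d-|G|+1\}$. The condition $\depth K[\link_\Delta G]\geq s$ becomes
\[
\tH^j(\link_\Delta(G\cup H);K)=0 \quad\text{for all } H\in\link_\Delta G \text{ and } j<s-|H|-1.
\]
Regroup these conditions by the face $F=G\cup H$. The bound attached to the pair $(G,H)$ is
\[
\min\{k-|H|-1,\ d-|F|\}=\min\{k-|H|-1,\ \dim\link_\Delta F\}.
\]
This bound is largest when $|H|$ is smallest. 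There are two cases.
\begin{itemize}
\item If $v_0\notin F$, we may take $H=\nl$ and $G=F$. This gives exactly condition~(1) of the statement, with bound $\min\{k-1,\dim\link_\Delta F\}$.
\item If $v_0\in F$, then $H$ must contain $v_0$, so the best choice is $H=\{v_0\}$ and $G=F\sm\{v_0\}$. This gives the $K$-vanishing for $j<\min\{k-2,\dim\link_\Delta F\}$, which is condition~(2)(ii).
\end{itemize}
Every other pair $(G,H)$ yields a weaker condition on the same face. Hence condition~(1) of Theorem~\ref{SkChar}, taken over all such $G$, is equivalent to condition~(1) together with~(2)(ii).

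\textbf{Expanding condition~(2) of Theorem~\ref{SkChar}.} Here $v_0\in G$, and the coefficient field is $L$. The same expansion applies, with $F=G\cup H\ni v_0$ and $H=\nl$ allowed. It gives the $L$-vanishing for $j<\min\{k-1,\dim\link_\Delta F\}$ for every face containing $v_0$, which is condition~(2)(i). The pairs with $H\neq\nl$ again give weaker conditions.

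\textbf{Conclusion and main obstacle.} Combining the two expansions shows that the conditions of the theorem are equivalent to those of Theorem~\ref{SkChar}. I expect the main obstacle to be the bookkeeping in the edge cases rather than any new idea:
\begin{itemize}
\item the empty face, including $F=\{v_0\}$, where $G=\nl$;
\item faces whose link is $\{\nl\}$, where degree $-1$ cohomology appears;
\item checking that the $\min$ with $k$ versus $\dim\link_\Delta F+1$ in Lemma~\ref{cKG} lines up under purity, including the case $k=2$, where the $K$-condition in~(2)(ii) is vacuous.
\end{itemize}
I would handle these by verifying directly that, for pure $\Delta$, the bound $s-|H|-1$ never exceeds the dimension of the link, so that no extra degree $-1$ conditions arise.
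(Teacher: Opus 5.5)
Your proposal is correct and follows essentially the same route as the paper. Purity comes from the preceding equidimensionality argument; then Theorem~\ref{SkChar} is combined with Lemma~\ref{cKG}, the pairs $(G,H)$ are regrouped by $F=G\cup H$, and the smallest admissible $|H|$ is taken: $0$, $1$, and $0$ for conditions (1), (2)(ii), and (2)(i), respectively. Your reading of (2)(i) as $L$-vanishing for $j<\min\{k-1,\dim\link_\Delta F\}$, as in the Introduction, is exactly what the paper's proof establishes; the version printed in the theorem body, which merely repeats (ii), is a typo.
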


\begin{proof}
If $R$ satisfies \Serre{k}, then $\Delta$ is pure by the preceding argument. We may therefore work 
with a pure $d$-dimensional complex in proving both directions.
For $G\in\Delta$, set $s_G=\min\{k,\dim\link_\Delta G+1\}$. By Lemma~\ref{cKG}, the condition
$\depth\cK[\link_\Delta G]\geq s_G$ is equivalent to 
the condition that $\tH^j(\link_{\link_\Delta G}H;\cK)=0$
for every $H\in\link_\Delta G$ and $j < s_G-|H| - 1.$ Write $F=G\cup H$ and let $r := |F\sm G| = |H|$. Then $\link_{\link_\Delta G}H=\link_\Delta F$. Purity gives
\[\begin{aligned}
 s_G-r-1
 &=\min\{k,d-\dim G\}-r-1\\
 &=\min\{k-r-1,d-\dim F-1\}\\
 &=\min\{k-r-1,\dim\link_\Delta F\}.
 \end{aligned}
\]
For each fixed $F$, the largest required range is obtained from the 
smallest allowed $r$.

For the conditions involving $K$ in Theorem~\ref{SkChar}, the allowed faces $G \inc F$ omit $v_0$. 
If $v_0\notin F$, we may take $G=F$, giving $r=0$ and precisely condition (1). 

If $v_0\in F$, we must have $r\geq1$, and $G=F\sm\{v_0\}$ gives  $r=1$. The resulting range is $j < \min\{k - 2,\dim\link_\Delta F\}$,
which is condition (2)(ii). For the conditions involving $L$ in Theorem~\ref{SkChar} 
the allowed $G$ contain $v_0$, so that $F$ also contains $v_0$. Taking $G = F$ gives the smallest value $r = 0$, and the resulting range is 
condition (2)(i).

This proves necessity. Conversely, for every allowed pair $G\inc F$, the displayed bound is at most the 
corresponding bound in (1), (2)(i), or (2)(ii). 
Hence, these vanishing conditions give all the link cohomology vanishings 
required by Lemma~\ref{cKG} and, therefore, both depth conditions of Theorem~\ref{SkChar}, which proves that $R$ satisfies \Serre{k}.\end{proof}

\begin{examp} Let $\Delta$ be the triangulation of $\Prj^2_\R$ given in Example \ref{ex-betti-P2}. The left-hand table shows
\begin{minipage}[t]{0.45\textwidth} 
\begin{center}
\vspace{1.5pt}
\renewcommand{\arraystretch}{1.6}
\begin{tabular}{|c|c|c|c|c|c|}
\hline
{\boldmath $\dim\; F$} \Strut{15pt}    & {\boldmath$\link_\Delta F$} & {\boldmath $\Tilde{\Homol}^{-1}$} & {\boldmath $\Tilde{\Homol}^{0}$} & {\boldmath $\Tilde{\Homol}^{1}$} & {\boldmath $\Tilde{\Homol}^{2}$} \\ \hline
-1                                  &    $\Delta$                                                               & \cellcolor[HTML]{FFCCC9}$0$                        & \cellcolor[HTML]{FFCCC9}$0$                       & \cellcolor[HTML]{FFCCC9}$\Ann_\kappa 2$        & $\kappa / 2\kappa$                                   \\ \hline
\rbx{4pt}{0}      \Strut{25pt}                               & \rbx{23.3pt}{{\tikzset{every picture/.style={line width=0.75pt}}      

\begin{tikzpicture}[x=0.75pt,y=0.75pt,yscale=-1,xscale=1,baseline={([yshift=-0.2ex]current bounding box.north)}]

\draw  [fill={rgb, 255:red, 0; green, 0; blue, 0 }  ,fill opacity=1 ] (19.5,4.81) .. controls (19.5,3.26) and (20.84,2) .. (22.48,2) .. controls (24.13,2) and (25.46,3.26) .. (25.46,4.81) .. controls (25.46,6.36) and (24.13,7.62) .. (22.48,7.62) .. controls (20.84,7.62) and (19.5,6.36) .. (19.5,4.81) -- cycle ;

\draw   (40.19,17.02) -- (33.37,36.69) -- (11.44,36.64) -- (4.71,16.94) -- (22.48,4.81) -- cycle ;
\draw  [fill={rgb, 255:red, 0; green, 0; blue, 0 }  ,fill opacity=1 ] (37.22,17.02) .. controls (37.22,15.46) and (38.55,14.21) .. (40.19,14.21) .. controls (41.84,14.21) and (43.17,15.46) .. (43.17,17.02) .. controls (43.17,18.57) and (41.84,19.83) .. (40.19,19.83) .. controls (38.55,19.83) and (37.22,18.57) .. (37.22,17.02) -- cycle ;
\draw  [fill={rgb, 255:red, 0; green, 0; blue, 0 }  ,fill opacity=1 ] (30.39,36.69) .. controls (30.39,35.14) and (31.72,33.88) .. (33.37,33.88) .. controls (35.01,33.88) and (36.35,35.14) .. (36.35,36.69) .. controls (36.35,38.24) and (35.01,39.5) .. (33.37,39.5) .. controls (31.72,39.5) and (30.39,38.24) .. (30.39,36.69) -- cycle ;
\draw  [fill={rgb, 255:red, 0; green, 0; blue, 0 }  ,fill opacity=1 ] (8.46,36.64) .. controls (8.46,35.09) and (9.79,33.83) .. (11.44,33.83) .. controls (13.08,33.83) and (14.42,35.09) .. (14.42,36.64) .. controls (14.42,38.19) and (13.08,39.45) .. (11.44,39.45) .. controls (9.79,39.45) and (8.46,38.19) .. (8.46,36.64) -- cycle ;
\draw  [fill={rgb, 255:red, 0; green, 0; blue, 0 }  ,fill opacity=1 ] (1.73,16.94) .. controls (1.73,15.39) and (3.06,14.13) .. (4.71,14.13) .. controls (6.35,14.13) and (7.69,15.39) .. (7.69,16.94) .. controls (7.69,18.49) and (6.35,19.75) .. (4.71,19.75) .. controls (3.06,19.75) and (1.73,18.49) .. (1.73,16.94) -- cycle ;

\end{tikzpicture}}}                                                              & \cellcolor[HTML]{FFCCC9} \rbx{4pt}{$0$}                        & \cellcolor[HTML]{FFCCC9} \rbx{4pt}{$0$}               
&\rbx{4pt}{$\kappa$}                                          & \rbx{4pt}{$0$}                                               \\ \hline
1                                       &   $\bullet \quad \bullet$                                                                & \cellcolor[HTML]{FFCCC9}$0$                        & $\kappa$                                          & $0$                                               & $0$                                               \\ \hline
2                                       &   $\{\nl\}$                                                          & $\kappa$                                           & $0$                                               & $0$                                               & $0$                                               \\ \hline
\end{tabular}
\end{center}
\end{minipage}
\hfill
\begin{minipage}[t]{0.45\textwidth}
 the data of $\Tilde{\Homol}^\bullet(\link_\Delta F; \kappa)$ for some arbitrary field $\kappa$. The cells $\Tilde{\Homol}^i$ are shaded for values of $i < \dim \Delta - \dim F - 1$. 
We check whether the Serre condition \Serre{k} holds for $\tSRrcomp{\Delta}$.
The only potentially problematic entry is $\simpHomol^1(\link_\Delta \nl; \kappa)$, which is nonzero if and only if the characteristic of $\kappa$ is 2. Since $\vertex{0} \notin \nl$, we require $\simpHomol^1(\link_\Delta \nl; K) = 0$ if $1 < \min \{ k - 1, 2\}$. It follows that $\tSRrcomp{\Delta}$ always has \Serre{2}, and $\tSRrcomp{\Delta}$ has \Serre{3} 
if and only if the characteristic of $K$ is different from 2.
\end{minipage}

\end{examp}

\begin{examp}
    We generalize the previous example to the case where $\Delta$ is a homology manifold over $V$ (see \S\ref{homby}). By Remark \ref{uc}, we have for any $V$-algebra $B$ and any nonempty face $F \in \Delta$ that $\simpHomol^j(\link_\Delta F; B) = 0$ for any 
    $j \neq \dim \Delta - |F|.$ In particular, the vanishing conditions in 
    Theorem~\ref{thm-tSR-terai} are automatically satisfied for all $F \neq \nl$. It follows that when $\Delta$ is a homology manifold, $\tSRrcomp{\Delta}$ has \Serre{k} if and only if $\simpHomol^j(\Delta; K) = 0$ for all $j < \min\{k- 1, \dim \Delta\}$, a condition which depends only on the geometric realization $|\Delta|$.
\end{examp}

\quad\bigskip

$\begin{array}{ll}
\qquad\textrm{Department of Mathematics}&\qquad\qquad\qquad \textrm{Department of Mathematics and Statistics}\\
\qquad\textrm{University of Michigan}   &\qquad\qquad\qquad \textrm{University of New Mexico}\\
\qquad\textrm{Ann Arbor, MI 48109--1043}      &\qquad\qquad\qquad \textrm{Albuquerque, New Mexico 87131}\\
\qquad\textrm{USA}                            &\qquad\qquad\qquad \textrm{USA}\\
\qquad\quad & \quad\\
\qquad\textrm{E-mail: hochster@umich.edu}     &\qquad\qquad\qquad \textrm{E-mail: ostrahan@unm.edu}\\ 
     
\end{array}$

\end{document}